\newcommand{\half}{\textstyle{\frac{1}{2}}}
\newcommand{\isobar}{\cong}
\newcommand{\bbH}{\mathbb{H}}
\newcommand{\N}{\mathbb{N}}
\newcommand{\Z}{\mathbb{Z}}
\newcommand{\bbP}{\mathbb{P}}
\newcommand{\Q}{\mathbb{Q}}
\newcommand{\R}{\mathbb{R}}
\newcommand{\VN}{\mathbb{VN}}
\newcommand{\Lambdahatinv}{\hat{\Lambda}^{-1}}
\newcommand{\magN}{\omega}
\newcommand{\BOM}{\boldsymbol{\Omega}}
\newcommand{\No}{\boldsymbol{\mathsf{No}}}
\newcommand{\Oz}{\boldsymbol{\mathsf{Oz}}}
\newcommand{\Nn}{\boldsymbol{\mathsf{Nn}}}
\newcommand{\dcup}{\uplus}
\newcommand{\Nmag}{\mathscr{N}}
\newcommand{\Omag}{\mathscr{O}}
\newcommand{\Mscr}{\mathscr{M}}
\newcommand{\Nscr}{\mathscr{N}}
\newcommand{\Oscr}{\mathscr{O}}
\newcommand{\hash}{\#}
\newcommand{\fcard}[1]{\hash{#1}}
\renewcommand{\fcard}[1]{|#1|}
\newtheorem{definition}{Definition}
\newtheorem{theorem}{Theorem}
\newtheorem{lemma}{Lemma}
\newtheorem{corollary}{Corollary}
\let\emptyset\varnothing
\newcommand{\Alpha}{\boldsymbol{\alpha}}
\renewcommand{\ge}{\geqslant}
\renewcommand{\le}{\leqslant}
\newcommand{\Eeq}{\overset{\to}{=}}
\newcommand{\Elt}{\overset{\to}{<}}
\newcommand{\comp}{^\mathsf{C}}
\newcommand{\Comp}{^\mathbf{C}}
\long\def\COMMENT#1{}
\begin{document}

\begin{center}
\begin{LARGE}

{\bf Counting Sets with Surnatural Numbers%
 \footnote{Magnums-17.tex}
}
\end{LARGE}

\begin{large}
Peter Lynch%
\footnote{\texttt{Peter.Lynch@ucd.ie}}
 \&\ 
 Michael Mackey,%
\footnote{\texttt{Mackey@maths.ucd.ie}}
  \\
 School of Mathematics \&\ Statistics, University College Dublin.
\end{large}

\COMMENT{  
\begin{large}
Author One%
\footnote{\texttt{Author.One@University.Col}}
\marginpar{\tiny Author Names Hidden}
 \&\ 
Author Two%
 \footnote{\texttt{Author.Two@University.Col}}
  \\
Department of Mathematics, University College.
\end{large}
}  

\end{center}

\centerline{25 January, 2025}


\bigskip
\bigskip
\bigskip
\centerline{\sc Abstract}

\begin{small}
\bigskip
How many odd numbers are there? How many even numbers?  From Galileo
to Cantor, the suggestion was that there are the same number of
odd, even and natural numbers, because all three sets can be mapped
in one-one fashion to each other.  This jars with our intuition:
cardinality fails to discriminate between sets that are intuitively
of different sizes.  

The class of \emph{surreal numbers} $\No$ is
the largest possible ordered field.  In this work we define a
function, the \emph{magnum}, mapping a selection of countable sets
to a subclass of the surreals, the {surnatural numbers} $\Nn$.
Set magnums are found to be consistent with
our intuition about relative set sizes.  The
magnum of a proper subset of a set is strictly less than the magnum
of the set itself, in harmony with Euclid's axiom, ``the whole is
greater than the part''.

Two approaches are taken to specify magnums. First, they are
determined by following the genetic assignment of magnums in the way
the surreal numbers themselves are defined. Second, the domain of the
counting sequence, which is defined for every countable set,
is extended, to evaluate it on the surnatural numbers.
The two methods are shown to be consistent. 

For a subset $A$ of $\N$, the magnum is defined as the value at $\omega$ 
of the extended counting function of $A$. Larger sets are partitioned
into finite components and a more general definition of magnums is presented.
Several theorems concerning the properties of magnums are proved,
and are employed to evaluate the magnums of a range of interesting countable sets.

The relativity of the magnum function is discussed and a number of examples
illustrate how its value depends on the choice and ordering of the reference set.
In particular, we show how the rational numbers may be ordered in such a way that
all unit rational intervals have equal magnums.

\end{small}


\bigskip
\bigskip
\bigskip
\setcounter{tocdepth}{2}
\tableofcontents


\newpage


\section{Introduction}
\label{sec:Intro}

Our objective is to define a number $m(A)$ for any countable set
$A$ that corresponds to our intuition about the
size or magnitude of $A$. We will call $m(A)$ the \emph{magnum}
of $A$.%
\footnote{As `magnum' is a contraction of `magnitude number',
we write the plural as magnums, not magna.}
For this, we require quantities that are beyond the real numbers
and more refined than the ordinal numbers. The surnatural numbers,
a subclass of the class of surreal numbers, are suitable for 
our purposes.

The surreal numbers, denoted by $\No$, were discovered by John~H.~Conway
around 1970. They form the largest possible ordered field, with all the
basic arithmetical operations, and sensible arithmetic
can be carried out over $\No$.  Moreover, quantities like $\omega/2$ and
$\sqrt{\omega}$ are meaningful (where $\omega$, the smallest infinite ordinal,
is the ``first-born'' infinite surreal number). 

A subclass of $\No$ called the omnific integers $\Oz$, 
plays the role of integers in the surreal context. We define the 
\emph{surnatural numbers} $\Nn$ to be the class of non-negative
omnific integers. The \emph{magnum} of a countable set $A$
is defined to be a surnatural number.
If $A$ is a finite set, then $m(A)$ is just the cardinality of $A$.
The magnum of $\mathbb{N}$ is $\omega$; the sets of even and odd
natural numbers both have magnums $\omega/2$. 
We find the magnums of a wide range of other  sets which are
consistent with our intuition about the relative sizes of these sets.

\subsubsection*{Background}

Galileo observed that the natural numbers and their squares could
be matched one-to-one, with each $n$ paired with its square $n^2$
and each perfect square with its root.  This caused him to doubt
the meaning of size for infinite sets.  Bernard Bolzano recognized the
crucial property of an infinite set, that it may be in one-to-one
correspondence with a proper part of itself.  However, he made clear
in his book \emph{Paradoxes of the Infinite} \cite{Bol15} that such a
correspondence is insufficient reason to regard two sets as being the
same size.  Dedekind later identified such a bijection as the defining
characteristic of an infinite set.

Cantor defined transfinite numbers and showed how every set has a
cardinal number.  However, cardinality is a \emph{blunt instrument}:
the natural numbers, rationals and algebraic numbers all have the
same cardinality, so $\aleph_0$ fails to discriminate between them.
Moreover, the arithmetic of the cardinals is awkward at best: for
example, $\aleph_0+1=\aleph_0$ and $\aleph_0^2=\aleph_0$.  

Cantor also showed that every well-ordered set has an ordinal number. 
For infinite sets, there are many possible orderings, so that 
\emph{a given set may have more than one ordinal assigned to it.}
The ordinals have the unattractive property of being non-commutative
for both addition and multiplication.  Thus, neither cardinals nor
ordinals provide a good basis for a calculus of transfinite numbers.

All infinite subsets of the natural numbers $\mathbb{N}$ have the
same cardinality. However, intuition strongly indicates that a
proper subset of a set should be ``smaller than'' the set itself.
This idea is generally called The Euclidean Principle,%
\footnote{The Euclidean Principle is also known as Aristotle's Principle
and as the Part-Whole Principle.}
as it accords with the fifth Common Notion as usually listed
in Euclid's \emph{Elements}
\cite[pg.~232]{He08} \cite{Risi20}.

In a series of publications starting with \cite{BeNa03} and culminating
in a masterful treatise, \emph{How to Measure the Infinite} \cite{BeNa19},
Vieri Benci and Mauro Di Nasso 
defined the concept of \emph{numerosity} as a generalization of
finite cardinality that also applies to infinite sets. Like magnums,
numerosity respects the Euclidean Principle, that ``the whole is
greater than the part'': the numerosity of a proper subset is
strictly smaller than that of the whole set.

There are similarities,
but also striking differences between the numerosities defined 
in \cite{BeNa19} and the set magnums defined here.  In particular, the
codomains are different: numerosities take values in the hyperreal
numbers whereas magnums are surnatural numbers.  Numerosity Theory is an
application of Alpha Theory, a simplified formulation of non-standard
analysis ($\Alpha$ is a ``new number'', the architypal infinite hyperreal
number, and the counterpart of the surreal number $\omega$). 
Numerosity Theory considers \emph{labelled sets}, which are
partitions of infinite sets into finite components. They are closely
related to the fenestrations defined in this paper. 

A model for Numerosity Theory is an equivalence relation on
$\mathbb{R}^{*}=\mathbb{R}^\mathbb{N}/\mathcal{U}$,
where $\mathbb{R}^{*}$ is the set of hyperreal numbers and
$\mathcal{U}$ is a free ultrafilter on $\mathbb{N}$
\cite[p.~221]{BeNa19}. There is an arbitrary aspect to the choice of
ultrafilter, which affects the numerosity value. For example, the
numerosities $\mathfrak{n}(2\mathbb{N})$ and $\mathfrak{n}(2\mathbb{N}-1)$
may or may not be equal; this depends on the parity of the hyperreal number
$\Alpha$ which, in turn, depends on the choice of $\mathcal{U}$.
For the surreals, a family $\BOM$ of `omega sets' that contains the set of
even numbers arises naturally. In consequence, the surreal number $\omega$
is unequivocally even, indeed divisible by any integer, and no ambiguity exists.


A simpler theory of the sizes of infinite sets that
respects the Euclidean Principle was presented by Trlifajov\'{a}
\cite{Trl24}.  This theory is strongly influenced by the work of
Bernard Bolzano, especially as presented in his book \cite{Bol15}
(see also \cite{Trl18}).
The method of Trlifajov\'{a} is elementary: no use is made of
ultrafilters, and the transfer principle is not needed.  The concept
of \emph{canonical arrangements} of sets in \cite{Trl24} is closely related
to the labelled sets of \cite{BeNa19}. 
The results are compared to Benci's numerosities. They agree for the most
part although there are striking differences, most notably the size of the
set of rational numbers.

Several other ways of expressing the sizes of infinite sets have been proposed.
In a recent review, Wenmackers (2024) compared six methods, including
cardinality, natural density and numerosity.  She described their merits
and shortcomings, and discussed how, for each method, some desirable
property must be abandoned. She stressed the non-unique and non-constructive
aspects of sizes that are assigned to subsets of $\N$ that are both infinite
and co-infinite.  For example, the methods of \cite{BeNa19} and \cite{Trl24}
do not uniquely determine the relative sizes of the odd and even natural numbers.


\subsubsection*{Transfer and Extension}

A preliminary development of the theory presented here
\cite{LyMa23} depended upon a \emph{Transfer Principle},
which posits that properties that hold for real numbers remain valid
when transferred to the surreal domain.  According to the
\emph{Encyclopedia of Mathematics} \cite{EoM},
\begin{quote}
\begin{small}
A Transfer Principle
allows us to extend assertions from one algebraic system $\mathit{T1}$
to another such system $\mathit{T2}$. Every elementary statement,
such as a formula, equation or inequality in the first-order language
of $\mathit{T1}$ is true for the extension $\mathit{T2}$.
\end{small}
\end{quote}
The Transfer Principle is a central premise in non-standard analysis
\cite{Eh22,Kei76,Kei13}.  However, it cannot be assumed in the
context of the surreal numbers. A simple counter-example illustrates
the problem: the real number $\sqrt{2}$ is irrational --- it is not
a ratio of two integers; however, the surreal number $\sqrt{2}$ is
the ratio of $\sqrt{2}\omega$ and $\omega$, both of which are omnific
integers. The theory presented in this paper does not employ a
transfer principle but depends only on a much weaker assumption,
the Axiom of Extension which, in many cases of interest, is obviously
valid.


\COMMENT{  
\subsection*{In Conclusions}

We note that Benci et al.~\cite{BeNa19} define numerosities on a family
$\mathfrak{U}=\bigcup_{n\in\mathbb{N}}(\bigcup_{k=1}^n \mathbb{N}^k)$,
which is a \emph{universe}, closed under subsets, disjoint unions and
Cartesian products.
}  

\subsubsection*{Desiderata for the Magnum Function}
\label{sec:Desiderata}

We should like to assign a ``size'' to any subset $A$ of $\mathbb{N}$
that accords with our intuition about its magnitude.
We denote the size of a set $A$ by $m(A)$,
called the {``magnitude number''} or \emph{magnum} of $A$.
We list below some desirable properties of the magnum function $m(A)$.
\begin{enumerate}
\item For a finite subset $A$, we require $m(A) = \fcard{A}$, the number of elements in $A$.
\item A \emph{proper} subset%
\footnote{Throughout this paper we denote subsets by $\subseteq$ and
proper subsets by $\subset$.}
$A$ of set $B$ should have a \emph{smaller} magnum than $B$:
$$
A \subset B \implies m(A) < m(B) \,.
$$
\item Addition of a new element $b\notin A$ should increase the magnum by one:
$$
m(A \dcup \{b\}) = m(A) + 1 
$$
\item Removal of an element $a\in A$ from a (non-void) set should reduce the magnum by one:
$$
m(A \setminus \{a\}) = m(A) - 1 
$$
\item The magnums of the union of two disjoint sets should add:
if $A \cap B = \varnothing$, then
$$
m( A \dcup B ) = m(A) + m(B)
$$
\item For the complete set of natural numbers, $m(\mathbb{N}) = \magN$.
\item
The odd and even numbers should behave \emph{sensibly}:
\begin{eqnarray*}
2\mathbb{N} = \{ 2, 4, 6, \dots \} &\implies& m(2\mathbb{N}) = \half \magN \\
2\mathbb{N}-1 = \{ 1, 3, 5, \dots \} &\implies& m(2\mathbb{N}-1) = \half \magN \,,
\end{eqnarray*}
ensuring that $m((2\mathbb{N})\dcup(2\mathbb{N}-1)) = 
m(2\mathbb{N}) + m(2\mathbb{N}-1) = \magN$.
\item Similar requirements apply for arithmetic sequences. For example,
\begin{equation}
k\mathbb{N} = \{ k, 2k, 3k, \dots \} \implies  m(k\mathbb{N}) = \frac{\omega}{k}.
\end{equation}
\item The size of the set of perfect squares should be `reasonable:'
$$
\mathbb{N}^{(2)} = \{ 1, 4, 9, 16, \dots \} \implies m(\mathbb{N}^{(2)}) = \sqrt{\omega}
$$
\item The magnum should be consistent with standard results of number theory. For example,
by the prime number theorem, the prime counting function $\pi(n)$ satisfies
$\pi(n) \approx n/\log n$. We expect the magnum of the set $\mathbb{P}$ of
prime numbers to satisfy
$$
m(\bbP) \approx \omega/\log\omega .
$$
\end{enumerate}
The objective is to define the magnum in such a manner that the above properties hold.


\subsubsection*{Outline of Contents}
\label{sec:outline}

A preliminary study of magnums, which appeared in \cite{Lyn19},
was developed and extended in \cite{LyMa23}.
The present paper supersedes those two studies.

Part~I developes the theory of magnums for subsets of $\N$.
The genetic development of magnums, following the evolution of
the surreal numbers day by day, is presented in \S\ref{sec:gendef}.
In \S\ref{sec:density}, the counting sequence and density sequence are introduced.
We explore the value of natural density in defining the sizes
of sets and discover its limitations.  In \S\ref{sec:theorems} 
several theorems concerning counting sequences are
stated and proved. The concept of isobaric equivalence
is introduced.
In \S\ref{sec:extension},
We examine how the domain of functions can be extended
from the natural to the surnatural numbers, and we
specify the Axiom of Extension.
 
In \S\ref{sec:DefMag}, the magnum of a subset $A$ of $\N$ is defined as
the value of the extended counting function of $A$ at $\omega$. 
Several important theorems on magnums are stated and proved in \S\ref{sec:M-theorems}.
In \S\ref{sec:Equiv} we show that the definition of magnums using the counting sequence
is consistent with the genetic approach. 
Fenestrations of sets are defined and the General Isobary Theorem is proved.

In Part~II, sets larger than or beyond $\N$ are considered.
In \S\ref{sec:largersets}, magnums are defined for sets $A$
such that $A\setminus\N \ne \emptyset$.
We generalize the indicator and counting point-functions to the 
set-functions $X(n)$ and $K(n)$ and use them to define magnums for
larger sets. We also illustrate how magnums can be defined
relative to reference sets other than $\N$. 

The set of rational numbers is considered in \S\ref{sec:rationals}
and we define an ordering of $\Q$ such that the magnum of a bounded set
of rational numbers is invariant under translation (a property regarded by
Bolzano as essential).
With this ordering, all unit rational intervals have equal magnums.
Theorems are applied in \S\ref{sec:Applics}
to deduce the magnums of a broad selection of sets.

Concluding remarks and observations about the prospects and opportunities
for future developments of the theory are presented in \S10.
Two Appendices are included: the first reviews the 
surreal numbers and surnatural numbers, the second
contains a calendar of magnums for selected sets.

\addcontentsline{toc}{part}
{Part~I: Subsets of $\N$} 
\part*{Part~I: Subsets of $\N$}
\label{sec:part1}
\section{Genetic Development of Magnums}
\label{sec:gendef}


The ultimate goal is to define magnums for all countable sets. In this
section, the focus is on $\mathscr{P}(\N)$, the power set of the natural
numbers.  We show how magnums of subsets of natural numbers may be 
generated using the \emph{genetic approach}: given a set $A\in\mathscr{P}(\mathbb{N})$,
we seek two sets of magnums, $L_A$ and $R_A$, such that
the surreal number $\{L_A\ |\ R_A\}$ is the magnum of $A$.
This will be done in a manner analogous to that in which the surreal
numbers themselves are defined [see Appendix~1].

The values of set magnums will be in the semi-ring of surnatural numbers $\Nn$.
The surnatural numbers are the non-negative omnific integers,
$\Nn=\Oz^{+}\cup\{0\}$.  The omnific integers and surnatural numbers,
first introduced in \cite[Chap.~5]{ONAG}, are briefly described in Appendix~1.
The immediate challenge is to construct a function $m$ from the power set
of the natural numbers to the {surnatural numbers},
$$
m : \mathscr{P}(\mathbb{N}) \to \Nn \,,
$$
that has the properties --- or desiderata --- outlined in the Introduction.
In particular, we require the Euclidean Principle to hold,
$m(A)= \fcard{A}$ for finite $A$, finite additivity of the mapping $m$,
and equal magnums for isobaric sets.
Two sets $A_1$ and $A_2$ are \emph{isobaric} if, for a simple partition of $\N$
into disjoint, finite sets $(W_n)_n$, all of equal size $L$, we have
$|A_1\cap W_n|=|A_2\cap W_n|$ for all $n$; see Definition~\ref{def:regfen}
in \S\ref{sec:isobar}, where we prove that isobary is an equivalence relation
and \S\ref{sec:GIT}, where we show that isobaric sets have equal magnums.


\subsection{Pre-magnums and the Magnum Form}
\label{sec:premagnums}

We seek a general expression for the magnum of a set $A$ in the form
\begin{equation}
m(A) = \{ m(B) : B \in \mathscr{M}, B \subset A \ | \ m(C) : C \in \mathscr{M}, A \subset C \} \,,
\label{eq:fullmagform}
\end{equation}
where $\mathscr{M}$ is the family of all sets whose magnums are to be defined.%
\footnote{We would hope that $\mathscr{M}$ comprises all the countable sets. 
However, the full extent of the family $\mathscr{M}$ remains to be determined.}
We note that the form (\ref{eq:fullmagform}) automatically guarantees that the
Euclidean Principle applies, because $B\subset A\subset C$ implies $m(B) < m(A) < m(C)$.
However, we cannot use (\ref{eq:fullmagform}) to define or evaluate $m(A)$, since it
requires knowledge of the magnums of all the other sets in $\mathscr{M}$. 
We need to construct the magnums in a \emph{recursive} or incremental fashion.



In the genetic approach, when defining the magnum of a set on Day~$\alpha$,
we use the magnums of `old' sets to generate the magnums of `new' sets.
We express the assignment of a magnum to a set $A$ on Day~$\alpha$ by saying that
$A$ is \emph{magnumbered} on Day~$\alpha$.
For each ordinal number $\alpha$, Conway \cite[pg.~29]{ONAG} defines the sets
$M_\alpha$, $N_\alpha$ and $O_\alpha$:
\begin{itemize}
\item $M_\alpha$ is the set of numbers born on or before Day~$\alpha$ (Made numbers),
\item $N_\alpha$ is the set of numbers born first on Day~$\alpha$ (New numbers), and 
\item $O_\alpha$ is the set of numbers born before Day~$\alpha$ (Old numbers).
\end{itemize}
Note that $M_\alpha = N_\alpha \uplus\, O_\alpha = O_{\alpha+1}$.
In a similar manner, we define three collections of sets:
\begin{itemize}
\item $\Mscr_\alpha$ are the [\emph{made}] sets that are magnumbered on or before Day~$\alpha$,
\item $\Nscr_\alpha$ are the [\emph{new}] sets that are magnumbered on Day~$\alpha$, and
\item $\Oscr_\alpha$ are the [\emph{old}] sets that are magnumbered before Day~$\alpha$.
\end{itemize}
The last two families combine to give the first:
$\Mscr_\alpha$ = $\Nscr_\alpha \uplus\, \Oscr_\alpha = \Oscr_{\alpha+1}$.  



Since, for a given set $A$, we do not know on which day its magnum is determined,
we introduce a sequence of numbers, one for each ordinal $\gamma$, defined by
\begin{equation}
m_\gamma(A) = \{ m(B) : B \in \Oscr_\gamma, B \subset A \ | \ m(C) : C \in \Oscr_\gamma, A \subset C \} \,.
\label{eq:premag}
\end{equation}
In this recursive construction, the proper subsets $B$ and
supersets $C$ of $A$ range over all sets whose magnums have been defined \emph{prior to} Day~$\gamma$.
As the day number increases, the value of the pre-magnum $m_\gamma(A)$ of $A$ may change;
$m_\gamma(A)$ is the nearest we can get to $m(A)$ on Day~$\gamma$ \cite{Sim17}.  
When a stage $\gamma=\alpha$ is reached where $m_\alpha(A)$ cannot undergo further changes,
we assign the magnum of $A$ to be
\begin{equation}
m(A) = m_\alpha(A)
\label{eq:magnumdef}
\end{equation}
and call $\alpha$ the birthday of $m(A)$.

The specific choices of sets to which magnums are tentatively assigned can
sometimes be justified by set-theoretic manipulations that confirm their properties.
In other cases, the assignments are provisional, requiring confirmation which is
provided in subsequent sections of this paper. The assignments via genetic forms
are shown to be consistent with the definition of set magnums introduced
in \S\ref{sec:DefMag} (see \S\ref{sec:Equiv}).

\renewcommand{\emptyset}{\varnothing}


\subsection{Day-by-day Construction of Magnums}
\label{sec:daybyday}


We construct magnums \emph{one day at a time}, as for the surreal numbers themselves.
Our strategy is to start with the least element of $\Nn$, that is $0 = \{\varnothing |\varnothing \}$ 
and proceed from each number to its successor(s), considering magnums in the same
order in which the underlying surnatural numbers are defined.
At each stage, we consider candidate sets that might have these numbers as their magnums.
In some cases, this is obvious; in other cases, further investigation is needed to
identify the appropriate set or sets. Proceeding genetically, we obtain magnums
for all the finite subsets of $\mathbb{N}$ by
Day~$\omega$, for all the cofinite subsets of $\mathbb{N}$ by Day~$2\omega$,
and many more sets by Day~$\omega^2$.


\textbf{Day 0:}
At this point, no magnums have been defined so that, for \emph{any}
set $A$, we must have $L_A = R_A = \emptyset$.
Thus, $m(A) = \{\ \emptyset\ |\ \emptyset\ \} = 0$.
Unless $A$ is empty, it has some proper subset $B$, and
the magnum of $B$ will be negative, which is outside the codomain $\Nn$.
Thus, the only possibility is $A=\emptyset$, so that $m(\emptyset)= 0$.

\textbf{Day 1:}
The only magnum number available for Day~1 is $0$, so the only possible
new value is $\{\ 0 \ |\ \emptyset\ \} = 1$.
For any set $A$ other than a singleton, there will exist
a proper subset $B$ with $\emptyset \subset B \subset A$.
We conclude that, for all $a\in\mathbb{N},\  m(\{a\}) = 1$.

\textbf{Days 2, 3, \dots :}
The only magnums available for Day~$n$ are $\{0, 1, \dots , n-1 \}$, so
the only possible new value is $\{0, 1, \dots , n-1 \ |\ \emptyset\ \} = n$.
Since one of our desiderata is that, for finite sets, $m(A) = \fcard{A}$,
it is natural to consider singletons on Day~1, doubletons on Day~2, tripletons
on Day~3, and so on. Thus we will find that
$$
m(\{a\}) = 1\,, \quad  m(\{a_1,a_2\}) = 2\,, \quad \dots \quad m(\{a_1,a_2,\dots,a_n\}) = n\,, \quad \cdots \,.
$$
We easily see that all of these cases concur with the form (\ref{eq:premag}).
In this way, we define the magnums for all finite subsets of $\mathbb{N}$.
Moreover, for all such sets, $m(A) = \fcard{A}$.%

\textbf{Day $\boldsymbol{\omega}$:}
The obvious choice for Day~$\omega$ is $A = \mathbb{N}$,
when we obtain our first infinite magnum:
\begin{equation}
m(\mathbb{N}) = \{ 1, 2, 3, \dots \ |\ \emptyset\ \} = \omega \,.
\label{eq:m-omega}
\end{equation}
Since we are (for the present) confining attention to the power-set of the
natural numbers, $\mathbb{N}$ is the only set that gets a magnum on Day~$\omega$.
Later, we will establish results such as $m(2\N\sqcup 2\N) = \omega$, where
the disjoint union $A\sqcup B$ is defined in \S\ref{sec:largersets}.

In the construction of the surreal numbers, all dyadic rationals
are born before Day~$\omega$ and all remaining `real' surreal numbers
emerge on that day. However, the only surnatural number to emerge
on Day~$\omega$ is $\omega$.
From this point onwards, $\omega$ may appear on the right side of the magnum form.

\textbf{Days $\boldsymbol{\omega}+1$ to {$2\boldsymbol{\omega}$:}}
We now ``work downwards'' from $\omega$.%
\footnote{When we consider sets beyond $\mathscr{P}(\mathbb{N})$,
the number $\omega+1$ will be available as a magnum.}
For $A = \mathbb{N}\setminus \{a\}$, the only proper superset is $\mathbb{N}$,
so, for any $a$,  $m(A) = \{\mathbb{N}\ |\ \omega\} = \omega-1$.
More generally, if we remove $n$ elements from $\mathbb{N}$, a set of magnum $\omega-n$
results.  We have now defined the magnums of all finite and all cofinite sets
in $\mathscr{P}(\mathbb{N})$.

\textbf{Day {$2\boldsymbol{\omega}$:}}
On Day~$2\omega$, we remove an infinite number of elements of $\mathbb{N}$,
so, the elements of $\{ \omega,\omega-1,\omega-2,\dots \}$ will be in $R_A$.
There are several possibilities:
\begin{itemize}
\item If we have removed \emph{all} elements of $\mathbb{N}$,
the magnum must be $0$, the magnum of $\emptyset$.
\item If we have left only a finite number, $k$, of elements, the magnum is
$k$.
\item If we have removed an infinite number and also left an infinite number,
then both $A$ and $\mathbb{N}\setminus A$ are infinite, and the new magnum is
$$
m(A) = \{\ \mathbb{N}\ |\ \omega,\omega-1,\omega-2,\dots \}  = \tfrac{\omega}{2} \,.
$$
\end{itemize}
But what sets have magnum $\omega/2$?
One obvious candidate is $2\N$, with
$$
m(2\mathbb{N}) = m(\{2,4,6, \dots \})
= \{\ \mathbb{N}\ |\ \omega,\omega-1,\omega-2,\dots \}  = \tfrac{\omega}{2} \,.
$$
The set of odd natural numbers, the complement of $2\mathbb{N}$, 
also has magnum ${\omega}/{2}$, so $m(2\mathbb{N}-1) = \omega/2$,
as have all sets isobaric to $2\N$.
These assignments will be confirmed below.


\textbf{Day {$2\boldsymbol{\omega}+1$ to $3\boldsymbol{\omega}$:}}
We now have a new limit magnum, $\omega/2$, which may appear on either the left or right side of the
magnum form. The immediate successors of $\omega/2$ are $\frac{\omega}{2} - 1$ and
$\frac{\omega}{2} + 1$.  The first provides the magnum for sets formed by 
removing a single even number from $2\mathbb{N}$,
for example, $2\mathbb{N}\setminus\{2\}$.
The second provides the magnum for sets formed by 
adding a single odd number to $2\mathbb{N}$,
for example, $2\mathbb{N}\uplus\{1\}$.
Both $\frac{\omega}{2}-1$ and $\frac{\omega}{2}+1$ are surnatural numbers
but only one successor of each is such.
We can form sets with magnums for these: for example, at Day~$2\omega+n$ we have
$$
m(2\mathbb{N}\setminus \{e_1, e_2, \dots,e_n\}) = \tfrac{\omega}{2}-n \,, \qquad
m(2\mathbb{N}\uplus \{o_1, o_2, \dots,o_n \}) = \tfrac{\omega}{2}+n \,,
$$
where $\{e_1, e_2, \dots,e_n\}$ and 
$\{o_1, o_2, \dots,o_n\}$ are, respectively, sets of $n$ even and odd integers.

\textbf{Day {$3\boldsymbol{\omega}$:}}
Following the pattern of creation of surreal numbers, where the dyadic fractions
all appear before any other numbers, we find a new number on Day~$3\omega$:
$$
\{ 1, 2, 3, \dots \ |\ \tfrac{\omega}{2},\tfrac{\omega}{2}-1,\tfrac{\omega}{2}-2,\dots \ \}
= \tfrac{\omega}{4} \,.
$$
There are many sets that may have magnum $\omega/4$, but the canonical choice
is $4\mathbb{N} = \{4, 8, 12, \dots\ \}$.

\textbf{Day {$3\boldsymbol{\omega}+1$ to $\boldsymbol{\omega}^2$:}}
We continue the process, with a new limit ordinal entering each
$\omega$ days until, on Day~$\omega^2$, we have all surreal numbers
of the form $ ((2m+1)/2^k)\omega \pm \ell$, which may be used on
either the left or right side of the magnum form.
We can form sets with magnums for these: for example, at Day~$4\omega+n$ we have
$$
m(4\mathbb{N}\setminus \{2e_1, 2e_2, \dots,2e_n\}) = \tfrac{\omega}{4}-n \,, \qquad
m(4\mathbb{N}\uplus \{o_1, o_2, \dots,o_n \}) = \tfrac{\omega}{4}+n \,,
$$
where $\{e_1, e_2, \dots,e_n\}$ and $\{o_1, o_2, \dots,o_n\}$ are, respectively,
sets of $n$ even and odd integers.

\textbf{Day {$\boldsymbol{\omega}^2$:}}
On Day~$\omega^2$ we have $m(k\N)=\frac{\omega}{k}$ for all $k\in\N$, so $\Oscr_{\omega^2}$
contains these sets and their isobaric equivalents.
We can also form
$$
\{ 1, 2, 3, \dots \ |\ \tfrac{\omega}{2},\tfrac{\omega}{4},\tfrac{\omega}{8}\dots \ \} = \sqrt{\omega} \,.
$$
We expect that the set of squares of the natural numbers,
$\N^{(2)} = \{1, 4, 9, 16, \dots \}$, has magnum
$\sqrt{\omega}$; this will be confirmed in {\S\ref{sec:Applics}.

In Appendix~2, a Calendar of some of the principle sets whose
magnums are defined on or before Day~$\omega^2$ is presented.

\textbf{Days beyond $\boldsymbol{\omega}^2$:}
The process may be continued with numbers like $\omega^{1/4}$,
$\omega^{1/n}$ and indeed $\omega^{1/\omega}$ appearing. More
generally, surnatural numbers of the form $\sum r_y\cdot\omega^{y}$
with $y\in[0,1]$ arise; the construction process never ceases.
Ultimately, we have assigned magnums to a large family $\mathscr{M}$ of sets.%


In \S\ref{sec:DefMag} we present a definition of the magnum of a set $A$ in
terms of the counting function of $A$. This will permit us to confirm the
choices made above.



\section{Natural Density and Set Size}
\label{sec:density}

In this section we examine the suitability of the natural density
of a set $A$ for defining a magnum for that set. In some cases, this
yields the expected value but in others it does not. Moreover, there
are many sets for which the density is undefined.  In \S\ref{sec:extension}
we will consider a more powerful and general method, using the counting sequence.


\subsubsection*{The Defining Sequence}

We assume that the elements of $A\subseteq\mathbb{N}$ are listed
in order of increasing size, with the \emph{defining sequence}
$a_A: k\mapsto a_k$.  The graph of $a_A$ is the set of points
$\{(1,a_A(1)),\dots,(k,a_A(k)),$ $\dots\}$ and, since $a_A$
is strictly increasing, the function $a_A$ has an inverse,
defined by $a_A^{-1}: a_k\mapsto k$, with
graph $\{(a_A(1),1),\dots,(a_A(k),k),\dots\}$.

The sequence $a_A^{-1}$ is defined on the set $A$.
It may be extended to the domain $\mathbb{N}$,
either by linear interpolation or by using a monotone algebraic
expression if that is available.



\subsubsection*{The Counting Sequence}

\newcommand{\areal}{\alpha}

Let $A$ be a subset of $\N$ with monotone increasing defining sequence
$a_A : k \mapsto a_k$.  We define the counting sequence $\kappa_A(n)$
to be the number of elements of $A$ less than or equal to $n$, so that
$\kappa_A(n) = | A \cap I_n |$ where $I_n = \{ 1, 2, \dots , n \}$.
There is an intimate connection between
$\kappa_A$ and the defining sequence $a_A$, such that
$\kappa_A(n)$ is the integer part of the inverse of $a_A$
evaluated at $n$.
\begin{theorem}
Let $A = \{a_n:n\in\N \}$ be a subset of $\N$ with a monotone increasing
defining sequence $a_A : k \mapsto a_k$. We interpolate $a_A:\N\to\N$ to
a strictly increasing continuous function $\areal:\R^{+}\to\R^{+}$ on the
positive real numbers.  Then, the counting sequence of the set $A$ is
\begin{equation}
\kappa_A(n) = \lfloor \areal^{-1}(n) \rfloor .
\label{eq:KapAlp}
\end{equation}
\label{th:16.24}
\end{theorem}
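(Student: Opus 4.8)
The plan is to establish the identity $\kappa_A(n) = \lfloor \areal^{-1}(n) \rfloor$ by directly unwinding both sides and exploiting the fact that $a_A$ agrees with $\areal$ on the integers while $\areal$ is a strictly increasing continuous bijection of $\R^+$ onto $\R^+$ (hence has a genuine continuous strictly increasing inverse $\areal^{-1}$). First I would fix $n\in\N$ and let $k = \kappa_A(n)$, so by definition $k$ is the number of indices $j$ with $a_j \le n$; since the $a_j$ are strictly increasing this means precisely $a_k \le n < a_{k+1}$ (with the usual convention $a_0 = 0$ or the obvious modification if $k=0$). The goal is then to show $k \le \areal^{-1}(n) < k+1$.

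The key step is to transfer these inequalities through $\areal^{-1}$. Because $\areal$ interpolates $a_A$, we have $\areal(k) = a_k$ and $\areal(k+1) = a_{k+1}$. From $a_k \le n$, i.e. $\areal(k) \le n$, and the monotonicity of $\areal^{-1}$, we get $k \le \areal^{-1}(n)$. From $n < a_{k+1} = \areal(k+1)$ we similarly get $\areal^{-1}(n) < k+1$. Combining, $k \le \areal^{-1}(n) < k+1$, which is exactly the statement that $\lfloor \areal^{-1}(n)\rfloor = k = \kappa_A(n)$.

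The main obstacle — really the only thing requiring care — is the boundary/edge behavior: handling the case $k=0$ (no element of $A$ is $\le n$, so there is no ``$a_k$'' and one must argue $\areal^{-1}(n) < a_1 = \areal(1)$ gives $\areal^{-1}(n) < 1$, hence floor $0$), and making sure the interpolation convention is stated so that $\areal(j)=a_j$ holds for every $j\in\N$ and $\areal$ is strictly increasing on all of $\R^+$ (not merely piecewise), so that $\areal^{-1}$ is well-defined and strictly monotone. I would also note the harmless subtlety that when $n$ happens to lie strictly between consecutive values $a_k < n < a_{k+1}$, $\areal^{-1}(n)$ is strictly between $k$ and $k+1$, whereas if $n = a_k$ exactly then $\areal^{-1}(n) = k$ is an integer and the floor is still $k$; either way the equation holds. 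Once the conventions are pinned down, the argument is the three-line monotonicity chain above.
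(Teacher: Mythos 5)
Your proposal is correct and follows essentially the same route as the paper: both arguments rest on the strict monotonicity of $\areal$ and $\areal^{-1}$ to identify $\kappa_A(n)$ with the largest integer $\ell$ satisfying $\areal(\ell)\le n$, i.e.\ $\ell\le\areal^{-1}(n)$, hence with $\lfloor\areal^{-1}(n)\rfloor$. Your explicit treatment of the $k=0$ boundary case and of the interpolation convention is a little more careful than the paper's, but it is the same proof.
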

\begin{proof}
Since $\areal$ is a strictly increasing function on $\R^{+}$, its inverse
$\areal^{-1}$ is also strictly increasing on $\R^{+}$.  We can compute the
integer part of $\areal^{-1}(n)$ starting from the counting function
$\kappa_A(n) = \{ \areal(1), \areal(2), \dots , \areal(k) \}$,
where $k$ is the largest value in $\N$ for which $\areal(k)\le n$. Thus, 
$$
\kappa_A(n) = \max\{\ell\in\N : \areal(\ell)\le n \} 
            = \max\{\ell\in\N : \ell\le \areal^{-1}(n) \} 
            = \lfloor \areal^{-1}(n) \rfloor .
$$
The floor operator ensures that $\kappa_A(n)$ is a natural number for all 
$n\in\mathbb{N}$. It also ensures that the value of $\kappa_A(n)$ is independent of
the manner in which $\areal:\R^{+}\to\R^{+}$ is interpolated continuously between
values of $a_A:\N\to\N$.
\end{proof}
Results equivalent to (\ref{eq:KapAlp}) are also proved in
\cite[ Prop.~16.24]{BeNa19} and \cite[Th.~6]{Trl24}.

If $A$ is finite, it is convenient to extend $\kappa_A$ to $\mathbb{N}$.
The obvious way to do this is to define $\kappa_A(n) = N$ for all
$n\ge a_A(N)$ where $N$ is the cardinality of $A$. We will see below
that this leads to the magnum $m(A) = N$.  
If $A$ is infinite, then $\kappa_A(n)$ increases without bound.


\subsubsection*{The Density Sequence}

The \emph{natural density} of a sequence $A : \mathbb{N} \to \mathbb{N}$ is
defined, if it exists, as the (real variable) limit of the density sequence:
$$
\rho_A = \lim_{n\to\infty} \rho_A(n) \,,
$$
where $(\rho_A(n))_n$ is calculated in terms of the counting function:
\begin{equation}
\rho_A(n) = \frac{\kappa_A(n)}{n} \,.
\label{eq:defden1}
\end{equation}
An alternative definition \cite{Ten95} uses the defining function $a_A(n)$:
\begin{equation}
\boldsymbol{\varrho}_A(n) = \frac{n}{a_A(n)} \,.
\label{eq:defden2}
\end{equation}
We note that $\rho_A(n)$ has terms with denominators for every natural number,
whereas the denominators of $\boldsymbol{\varrho}_A(n)$ are confined to elements of $A$.
Thus, $\boldsymbol{\varrho}_A(n)$ is a subsequence of $\rho_A(n)$ so that, if 
$\rho_A(n)$  converges to $\rho_A$ then so does $\boldsymbol{\varrho}_A(n)$.
The connection between the two sequences is $\boldsymbol{\varrho}_A(n) = \rho_A(a_A(n))$.

\subsubsection*{A Graphical Example}

\begin{figure}[h]
\begin{center}
\includegraphics[width=0.45\textwidth]{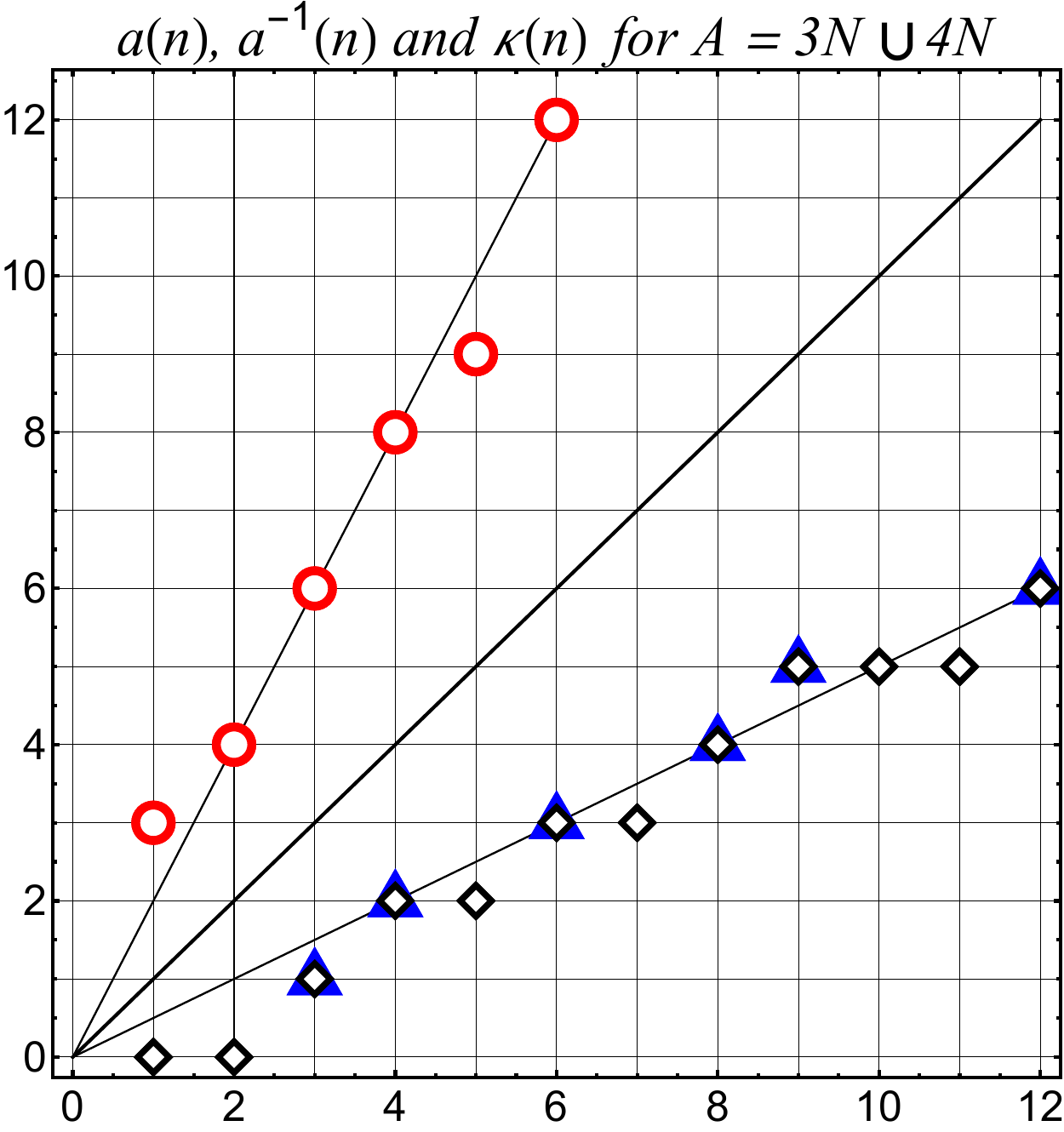}
\caption{The initial values of the defining sequence $a_A(n)$ (red circles),
         its inverse $a_A^{-1}(n)$ (blue triangles), and the counting sequence
         $\kappa_A(n)$ (black diamonds) for the set $A = 3\mathbb{N}\cup 4\mathbb{N}$.}
\label{fig:AlphaKappa}
\end{center}
\end{figure}

To illustrate the sequences defined above, we consider a simple example,
the set $A = 3\mathbb{N}\cup 4\mathbb{N}$. We will prove later that 
$m(A) = m(3\mathbb{N}) + m(4\mathbb{N}) - m(12\mathbb{N}) = \frac{\omega}{2}$.
In Fig.~\ref{fig:AlphaKappa} we show the initial values of the defining sequence
$a_A(n)$ (circles, red online), the inverse of this function $a_A^{-1}(n)$
(triangles, blue online), and the counting sequence $\kappa_A(n)$ (black diamonds) for
the set $A$.  The $n$-th term of the natural density sequence $\rho_{A}(n)$
is the slope of the line from the origin to the point $(n,\kappa_A(n))$.
The slope of the line from the origin to the point $(n,a_A(n))$
is the \emph{sparsity}, $\boldsymbol{\varsigma}_A(n) = 1/\boldsymbol{\varrho}_A(n)$.

\subsubsection*{Defining $m(A)$ by using the Density}%

For sets whose density is defined,
we can use the density to obtain a
surreal expression for $m(A)$.  In the case where $\rho_A\ne 0$,
we might attempt to define the magnum of $A$ as 
\begin{equation}
m(A) := \rho_A\cdot\omega \,.
\label{eq:rhomag}
\end{equation}
In \S\ref{sec:M-theorems} we prove the Density Theorem:
if $\rho_A$ exists, then $m(A)\approx\rho_A\omega$.
%
%
%
Some examples will illustrate the evaluation of magnums using (\ref{eq:rhomag}):

$\bullet$
For $A = \mathbb{N}$, the defining function $a_{\mathbb{N}}$
is the identity function: $a_\mathbb{N}(n) = \mathrm{id}(n) = n$.
The counting function for $\mathbb{N}$ is $\kappa_{\mathbb{N}}(n) = n$
and the density sequence is constant with $\rho_\mathbb{N}(n) = 1$,
so that its (real) limit is $\rho_\mathbb{N} = 1$.
From (\ref{eq:rhomag}) it follows that $m(\mathbb{N}) = \omega$.

$\bullet$ 
For $A = 2\mathbb{N} = \{2, 4, 6, \dots \}$ we have
$\rho_{2\mathbb{N}}(2k-1) = \frac{k-1}{2k-1}$ and $\rho_{2\mathbb{N}}(2k) = \frac{k}{2k}$ 
with limiting value $\rho_{2\mathbb{N}} = \frac{1}{2}$. Thus,
$$
m(2\mathbb{N}) = \rho_{2\mathbb{N}}\cdot\omega = \frac{\omega}{2} \,.
$$

  
$\bullet$ 
For $A = 2\mathbb{N}-1 = \{1, 3, 5, \dots \}$ we have
$\rho_{2\mathbb{N}-1}(2k-1) = \frac{k}{2k-1}$ and $\rho_{2\mathbb{N}-1}(2k) = \frac{k}{2k}$ 
with limiting value $\rho_{2\mathbb{N}-1} = \frac{1}{2}$. Thus,
$$
m(2\mathbb{N}-1) = \rho_{2\mathbb{N}-1}\cdot\omega = \frac{\omega}{2} \,.
$$

$\bullet$ 
For $A = k\mathbb{N} =\{k,2k,\dots,nk,\dots \}$ it is easy to show that $\rho_{k\mathbb{N}} = 1/k$
and it follows that $m(k\mathbb{N}) = \omega/k$.


\subsubsection*{Dependence of Size upon Ordering}
\label{sec:densorder}

The counting sequence of a set depends on the ordering of the reference set $\N$.
This was discussed in \cite{LyMa24} and a single example will suffice here.
Under the canonical ordering of the natural numbers,
$\N = \{1, 2, 3, \dots \}$, the even numbers have counting sequence
$\kappa_{2\N}(n) = (\lfloor\frac{n}{2}\rfloor)_n$ and the density is $\rho(2\N) = \frac{1}{2}$.
Now let us reorder $\N$, listing two odd numbers followed by an even one:
$$
\widetilde{\N} = \{1,3,2; 5,7,4; 9,11,5; \dots \} .
$$
Then the counting function of the even numbers is 
$\kappa_{\widetilde{2\N}}(n) = (\lfloor\frac{n}{3}\rfloor)_n$
and the density is $\rho(\widetilde{2\N}) = \frac{1}{3}$.

Since magnums are defined in terms of counting sequences, their values are
sensitive to the chosen ordering of the reference set $\N$.  Unless otherwise
stated, we will assume that $\N$ has its canonical ordering and all subsets
of natural numbers inherit their ordering from $\N$.

The ordering of larger sets, such as $\N\times\N$, will be discussed in
\S\ref{sec:largersets}.  When a set $A$ not contained in $\N$ is considered,
its magnum will be defined \emph{relative to a reference set $R$},
and its value will depend on the ordering of that set.


\subsubsection*{Difficulties Defining Magnums using Density}

The above method avoids the direct extension of $\kappa_A(n)$, for which the conditions
are, as yet, imprecise. The density sequence $\rho_A(n) = \kappa_A(n)/n$ is 
\emph{always defined}, although the limit exists only in some cases.
Moreover, the usual real-variable limit is insufficiently discriminating.
To illustrate the problems, let us consider two sets, $A=2\mathbb{N}\uplus\{1\}$
and $A = \{n^2 : n\in\mathbb{N}\}$.

$\bullet$
For $A=2\mathbb{N}\uplus\{1\}$ we find $\rho_A = \frac{1}{2}$ and $m(A)=\omega/2$.
The method fails to catch the addition of the term $a_A(1) = 1$.
We should have hoped for $m(A) = \frac{\omega}{2}+1$. 

$\bullet$
For $A = \{n^2 : n\in\mathbb{N}\}$ we have $\rho_A(n) \approx 1/\sqrt{n}$ so that $\rho_A=0$.
The limiting process has obliterated all information about the sequence, giving $m(A)=0$.
The usual limit loses information that could produce higher-order corrections to the magnum.
We really need a finer-grain description of set size.

There are several paths to explore. For example, instead of considering 
$\rho_A(n) = {\kappa_A(n)}/{n}$, we might define 
$\sigma_A(n) = {\kappa_A(n)}/{f(n)}$, choosing $f(n)$ such that 
$\sigma_A = \lim_{n\to\infty} \sigma_A(n)$ is finite and positive. 
However, for a set like
$A = \{n^2 : n\in\mathbb{N}\} \cup \{n^3 : n\in\mathbb{N}\}$,
we need $\sigma_A(n)$ to have components asymptotic to both $\sqrt{n}$ and $\sqrt[3]{n}$.

Wenmackers (2024) emphasised that all methods of specifying the sizes of infinite sets
involve compromises.  She observed that natural density, which is not defined for all
sets in $\mathscr{P}(\N)$, can be extended, but the density values in the extension
are not unique.
It is clear that the definition (\ref{eq:rhomag}) is insufficiently discriminating
and cannot be used as a general definition of a magnum. Thus, we are led to consider
alternative ways to evaluate ${\kappa_A}(\omega)$.  


\section{Some Theorems Concerning Counting Sequences}
\label{sec:theorems}

In this section, we concentrate on subsets of $\mathbb{N}$.
Larger sets will be considered in \S\ref{sec:largersets}.
In \S\ref{sec:SomeTheorems} we prove several theorems about the
counting sequences of sets in $\mathscr{P}(\mathbb{N})$.
In \S\ref{sec:isobar} we introduce the concept of
isobaric equivalence of sets.

We start with some definitions. 
\label{pg:chikap}
For a subset $A\subseteq\mathbb{N}$, the indicator sequence --- or characteristic
sequence --- $\chi_A = (\chi_A(n))_n$ is defined as
$$
\chi_A(n) = | A\cap\{n\}| 
          = \begin{cases}
            1 & \text{\ if\ } n\in A \\
            0 & \text{\ if\ } n\not\in A 
            \end{cases}
$$
For any indicator sequence $\chi = (\chi(n))_n$, we define the counting sequence
$\kappa:\mathbb{N}\to\mathbb{N}_0$ by 
$$
\kappa(n) = | A\cap I_n| = \sum_{k=1}^n \chi(k) .
$$
where $I_n = \{1,2,\dots,n\}$.  It follows that $\kappa(1) = \chi(1)$
and $\kappa(n) = \kappa(n-1) + \chi(n)$ for $n > 1$.
Note that any counting sequence $(\kappa(n))_n$ is nondecreasing,
with $\kappa(1)\in\{0,1\}$ and $\kappa(n+1)\in\{\kappa(n),\kappa(n)+1\}$.

Two counting sequences are equal, $\kappa_A = \kappa_B$, 
if and only if all corresponding terms are equal. In that case, $A=B$.
We define one counting sequence to be strictly less than another,
$\kappa_B < \kappa_A$, if and only if each term of the one
is strictly less than the corresponding term of the other.

\noindent
\textit{Sum and Product of Sequences.}

Suppose $A$ and $B$ are subsets of $\mathbb{N}$, with counting sequences
$\kappa_A = (\kappa_A(n))_n$ and $\kappa_B = (\kappa_B(n))_n$.
We define the sum and product of two sequences componentwise:
\begin{eqnarray*}
\kappa_A + \kappa_B     &:=& (\kappa_A(n) + \kappa_B(n))_n  \\
\kappa_A \cdot \kappa_B &:=& (\kappa_A(n) \cdot \kappa_B(n))_n \,.
\end{eqnarray*}

\subsubsection*{Eventual Equality and the Fr\'{e}chet Filter}

Two counting sequences may differ for a finite number of terms
but be equal for all terms beyond some point. We define
\emph{eventual equality} (denoted by $\Eeq$) of two sequences
as follows:
$$
\kappa_A \Eeq \kappa_B \iff
\exists N : \kappa_A(n) = \kappa_B(n) \text{\ for all\ } n \ge N \,.
$$
It is clear that eventual equality is an equivalence relation.
We define a sequence $\kappa_B$ to \emph{eventually less than}
$\kappa_A$ as follows:
$$
\kappa_B \Elt \kappa_A \iff
\exists N : \kappa_B(n) < \kappa_A(n) \text{\ for all\ } n \ge N \,.
$$
It is clear that this is a strict partial ordering of counting sequences.

Let $A = \{a_1, a_2, \dots, a_M\}$ be a finite subset of $\mathbb{N}$
with cardinality $\fcard{A} = M$.
The counting sequence of $A$ is
\begin{equation}
\kappa_A(n) = \fcard{A\cap I_n} \,.
\label{eq:finhash}
\end{equation}
Clearly, $\kappa_A(n)$ is eventually equal to $M$.

The discussion of eventual equality and eventual dominance is facilitated
by introducing the \emph{Fr\'{e}chet Filter}, as in \cite{Trl24}.
%
%
The Fr\'{e}chet Filter is the set of all cofinite subsets of $\mathbb{N}$:
$$
\mathcal{F} := \{A\subseteq\mathbb{N} : \mathbb{N}\setminus A\ \text{is finite}\}
$$ 
If $A$ and $B$ are subsets of $\mathbb{N}$ with counting sequences
$\kappa_A$ and $\kappa_B$, then $\kappa_A \Eeq \kappa_B$ if and
only if $A \equiv B \textrm{\ (mod\ }\mathcal{F})$, that is, $A$ and $B$
are in the same class $[\, A\, ]$ when the power set $\mathscr{P}(\mathbb{N})$
is partitioned into equivalence classes of the quotient set $\mathscr{P}(\mathbb{N})/\mathcal{F}$.
Sets in the same equivalence class are eventually equal:
$$
[\, A\, ] = \{B\subseteq\mathbb{N} : \kappa(B) \Eeq \kappa(A) \} \,.
$$
We will show later (\S\ref{sec:M-theorems}, Theorem~\ref{th:EventualEquality})
that $\kappa_A \Eeq \kappa_B$ implies $m(A) = m(B)$.

\label{sec:M-theorems}

\subsection{Some Theorems}
\label{sec:SomeTheorems}

We first show that the counting sequence of a proper subset is eventually
less than that of the set itself. This is a crucial step in establishing 
the Euclidean Principle.
\begin{theorem} 
\label{th:EP1}
Let $A$ and $B$ be subsets of $\mathbb{N}$, with counting sequences
$\kappa_A$ and $\kappa_B$. If $B\subset A$ then
$\kappa_B(n) \le \kappa_A(n)$ for all $n$.
Moreover, $\kappa_B \Elt \kappa_A$.
\end{theorem}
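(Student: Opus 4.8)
The plan is to prove the two assertions in sequence, since the first (pointwise domination) is the easy half and the second (eventual strict domination) is where the real content lies.

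For the pointwise inequality $\kappa_B(n) \le \kappa_A(n)$: since $B \subset A$, for every $n$ we have $B \cap I_n \subseteq A \cap I_n$, and taking cardinalities of these finite sets gives $\kappa_B(n) = |B \cap I_n| \le |A \cap I_n| = \kappa_A(n)$. This is immediate from monotonicity of cardinality on finite subsets.

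For the eventual strict inequality $\kappa_B \Elt \kappa_A$, I would argue as follows. Because $B \subset A$ is a \emph{proper} subset, there exists an element $a \in A \setminus B$. Set $N = a$. I claim that $\kappa_B(n) < \kappa_A(n)$ for all $n \ge N$. Indeed, fix $n \ge a$. Then $a \in A \cap I_n$ but $a \notin B \cap I_n$, so $B \cap I_n$ is a \emph{proper} subset of $A \cap I_n$; since both are finite, $\kappa_B(n) = |B \cap I_n| < |A \cap I_n| = \kappa_A(n)$. Hence the witness $N = a$ establishes $\kappa_B \Elt \kappa_A$ by the definition of eventual strict domination given in the excerpt.

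The only subtlety — and the step I would flag as the main (minor) obstacle — is making sure the case $A = B$ is genuinely excluded and that the argument does not silently assume $B$ is nonempty or that $A \setminus B$ contains infinitely many elements; all that is actually needed is a \emph{single} witness $a \in A \setminus B$, which the strictness of the inclusion $B \subset A$ guarantees (recall the paper's convention that $\subset$ denotes proper inclusion). One could optionally remark that if $A \setminus B$ is infinite then in fact the gap $\kappa_A(n) - \kappa_B(n)$ tends to infinity, but this is not required for the statement as posed, so I would leave it as a side comment or omit it entirely.
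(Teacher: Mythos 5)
Your proof is correct and follows essentially the same route as the paper's: the paper phrases the argument through the indicator sequences ($\chi_B(n)\le\chi_A(n)$ for all $n$, with strict inequality at some $N\in A\setminus B$, then sums to get $\kappa_B(n)<\kappa_A(n)$ for $n\ge N$), which is exactly your witness-element argument expressed via partial sums rather than via cardinalities of $B\cap I_n\subsetneq A\cap I_n$. Your explicit choice of $N=a$ and the remark that only one witness is needed match the paper's reasoning, so no changes are required.
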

\begin{proof}
$B\subset A$ implies $\chi_B(n) \le \chi_A(n)$ for all $n$ and
$\chi_B(N) < \chi_A(N)$ for some $N$.
Thus, $\kappa_B(n) \le \kappa_A(n)$ for all $n$,
so $\kappa_B \le \kappa_A$.
Moreover, $\kappa_B(n) < \kappa_A(n)$ for all $n \ge N$,
so $\kappa_B \Elt \kappa_A$.
\end{proof}
The extension of this result (Theorem~\ref{th:Theorem-M1} in \S\ref{sec:M-theorems})
will guarantee the validity of the Euclidean Principle.
\begin{theorem} 
\label{th:FinAddKappa}
Let $A$ and $B$ be subsets of $\mathbb{N}$, with counting sequences
$\kappa_A$ and $\kappa_B$. Then $\kappa_{A\cup B}$, the counting sequence of
$A\cup B$, is
$$
\kappa_{A\cup B} = \kappa_A + \kappa_B - \kappa_{A\cap B}.
$$
\end{theorem}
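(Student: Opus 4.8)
The plan is to reduce the claimed identity for counting sequences to the corresponding identity for indicator sequences, term by term. Recall that $\kappa_S(n) = \sum_{k=1}^n \chi_S(k)$ for any $S \subseteq \mathbb{N}$, so it suffices to show that the indicator sequences satisfy $\chi_{A\cup B}(n) = \chi_A(n) + \chi_B(n) - \chi_{A\cap B}(n)$ for every $n$, and then sum both sides from $k=1$ to $n$ and use linearity of the (finite) sum. Once the indicator identity is in hand, the counting-sequence identity follows immediately since $\sum_{k=1}^n (\chi_A(k) + \chi_B(k) - \chi_{A\cap B}(k)) = \kappa_A(n) + \kappa_B(n) - \kappa_{A\cap B}(n)$ by the componentwise definitions of sum and difference of sequences given in the text.

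To establish the pointwise indicator identity, I would simply check it on the membership status of a fixed $n$. There are four cases according to whether $n \in A$ and whether $n \in B$. If $n \in A \cap B$, then $n$ lies in $A$, $B$, $A\cup B$ and $A\cap B$, so both sides equal $1 = 1 + 1 - 1$. If $n \in A \setminus B$, the left side is $1$ and the right side is $1 + 0 - 0 = 1$; symmetrically for $n \in B \setminus A$. If $n \notin A \cup B$, every indicator is $0$ and the identity reads $0 = 0 + 0 - 0$. This is just the inclusion–exclusion principle for two sets applied to the one-element set $\{n\}$, i.e.\ $|(A\cup B)\cap\{n\}| = |A\cap\{n\}| + |B\cap\{n\}| - |(A\cap B)\cap\{n\}|$, which is the definition of $\chi$ evaluated at $n$.

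There is essentially no obstacle here: the result is elementary and the only thing to be careful about is bookkeeping — making sure the componentwise sum and difference of sequences are the ones defined earlier in this section, and that $\kappa_{A\cap B}$ is well-defined as a counting sequence (which it is, since $A\cap B \subseteq \mathbb{N}$). One could also phrase the whole argument in a single line by observing $\chi_{A\cup B} = \chi_A + \chi_B - \chi_{A\cap B}$ as an identity of $\{0,1\}$-valued sequences and then applying the partial-sum operator $n \mapsto \sum_{k=1}^n (\cdot)$, which is additive; the case analysis above is only needed to justify that one line. I would present the proof in the compact form with a brief remark that it is inclusion–exclusion at each point $n$.
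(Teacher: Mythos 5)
Your proposal is correct and follows essentially the same route as the paper: the paper's proof also rests on the pointwise identity $\chi_{A\cup B}(n) = \chi_A(n) + \chi_B(n) - \chi_{A\cap B}(n)$ and then sums from $k=1$ to $n$, though it states the indicator identity without the case analysis you supply. Your four-case verification is a harmless elaboration of the same argument, not a different method.
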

\begin{proof}
$\chi_{A\cup B}(n) = \chi_A(n) + \chi_B(n) - \chi_{A\cap B}(n)$.
Therefore, for all $n$,
\begin{eqnarray*}
\kappa_{A\cup B}(n) &=& \sum_{k=1}^n [\chi_A(k) + \chi_B(k) - \chi_{A\cap B}(k)] \\
                    &=&     [\kappa_A(n) + \kappa_B(n) - \kappa_{A\cap B}(n)] ,
\end{eqnarray*}
which implies the required result, $\kappa_{A\cup B} = \kappa_A + \kappa_B - \kappa_{A\cap B}$.
\end{proof}
Finite additivity for the counting sequences of disjoint sets follows immediately:
\begin{corollary} 
For disjoint subsets $A$, $B$ of $\mathbb{N}$, with counting sequences
$\kappa_A$ and $\kappa_B$,
\begin{equation}
\kappa_{A\uplus B} = \kappa_A + \kappa_B \,.
\label{eq:finado}
\end{equation}
\label{cor:k1}
\end{corollary}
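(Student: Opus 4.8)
The plan is to derive the corollary directly from Theorem~\ref{th:FinAddKappa} by specializing to the disjoint case. The key observation is that when $A$ and $B$ are disjoint, $A \cap B = \emptyset$, and the counting sequence of the empty set is identically zero: $\kappa_\emptyset(n) = |\emptyset \cap I_n| = 0$ for every $n$. So first I would note that $A \uplus B$ (the disjoint union notation) is simply $A \cup B$ under the hypothesis $A \cap B = \emptyset$, and then invoke the theorem to write $\kappa_{A\uplus B} = \kappa_A + \kappa_B - \kappa_{A \cap B} = \kappa_A + \kappa_B - \kappa_\emptyset = \kappa_A + \kappa_B$, where the subtraction of the zero sequence leaves the sum unchanged componentwise.

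Alternatively, and perhaps more cleanly at the level of indicator sequences, I would observe that disjointness gives $\chi_A(n) \cdot \chi_B(n) = 0$ for all $n$, so $\chi_{A \uplus B}(n) = \chi_A(n) + \chi_B(n)$ pointwise (there is never a double-count), and then summing from $k=1$ to $n$ gives $\kappa_{A\uplus B}(n) = \kappa_A(n) + \kappa_B(n)$ by linearity of the partial-sum operator. Either route is a one-line argument.

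There is essentially no obstacle here: the corollary is an immediate specialization, and the only thing worth stating explicitly is the fact that $\kappa_{A\cap B}$ vanishes identically when the intersection is empty. I would therefore keep the proof to a single sentence and not belabor it.

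\begin{proof}
Since $A\cap B=\varnothing$, the counting sequence $\kappa_{A\cap B}=\kappa_\varnothing$ is identically zero: $\kappa_\varnothing(n)=|\varnothing\cap I_n|=0$ for all $n$. Applying Theorem~\ref{th:FinAddKappa} with this intersection gives
$$
\kappa_{A\uplus B} = \kappa_A + \kappa_B - \kappa_{A\cap B} = \kappa_A + \kappa_B \,,
$$
as claimed.
\end{proof}
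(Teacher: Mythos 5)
Your proof is correct and matches the paper's intent exactly: the paper presents this corollary as an immediate consequence of Theorem~\ref{th:FinAddKappa}, which is precisely your specialization with $\kappa_{A\cap B}=\kappa_\varnothing=0$. Nothing further is needed.
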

By an inductive argument, this yields a more general result:
\begin{corollary} 
For a finite set of disjoint subsets $\{A_j : j=1, 2, \dots, N \}$
of $\N$  with counting sequences $\kappa(A_j) = \kappa_{A_j}$,
$$
\kappa\biggl(\biguplus_{j=1}^N A_j\biggr) = \sum_{j=1}^N \kappa(A_j) \,.
$$
\label{cor:k2}
\end{corollary}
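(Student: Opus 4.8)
The final statement to be proved is Corollary~\ref{cor:k2}: for a finite disjoint family $\{A_j : j=1,\dots,N\}$ of subsets of $\N$ with counting sequences $\kappa_{A_j}$, we have $\kappa\bigl(\biguplus_{j=1}^N A_j\bigr) = \sum_{j=1}^N \kappa(A_j)$.

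The plan is to prove this by induction on $N$, using Corollary~\ref{cor:k1} as the engine. For the base case $N=1$ the statement is the tautology $\kappa(A_1) = \kappa(A_1)$; one could equally start at $N=2$, which is precisely Corollary~\ref{cor:k1}. For the inductive step, assume the result holds for some $N\ge 1$ and consider a disjoint family $\{A_1,\dots,A_{N+1}\}$. Set $B = \biguplus_{j=1}^N A_j$. The only genuinely substantive point is to observe that $B$ and $A_{N+1}$ are disjoint: since the whole family is pairwise disjoint, $A_{N+1}\cap A_j = \emptyset$ for each $j\le N$, hence $A_{N+1}\cap B = A_{N+1}\cap\bigl(\bigcup_{j=1}^N A_j\bigr) = \bigcup_{j=1}^N (A_{N+1}\cap A_j) = \emptyset$. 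Therefore $\biguplus_{j=1}^{N+1} A_j = B \uplus A_{N+1}$ is a legitimate disjoint union, and Corollary~\ref{cor:k1} applies to give $\kappa(B\uplus A_{N+1}) = \kappa(B) + \kappa(A_{N+1})$. Finally apply the inductive hypothesis to rewrite $\kappa(B) = \sum_{j=1}^N \kappa(A_j)$, yielding $\kappa\bigl(\biguplus_{j=1}^{N+1} A_j\bigr) = \sum_{j=1}^{N+1}\kappa(A_j)$, which closes the induction.

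There is essentially no obstacle here: the result is a routine finite induction, and the componentwise additivity of counting sequences does all the work. The one place where care is warranted — and the only place a sloppy argument could go wrong — is the disjointness bookkeeping in the inductive step: one must use pairwise disjointness of the full family (not merely disjointness of consecutive terms) to conclude that the partial union $B$ is disjoint from the next term $A_{N+1}$. Since Corollary~\ref{cor:k1} requires genuine disjointness of its two arguments, this verification is not optional, but it is immediate from the distributivity of intersection over union. No properties of $\N$ beyond those already used for Corollary~\ref{cor:k1} are needed, and the identity holds termwise for every $n\in\N$.
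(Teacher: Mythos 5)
Your proof is correct and follows exactly the route the paper intends: the paper states that Corollary~\ref{cor:k2} follows ``by an inductive argument'' from Corollary~\ref{cor:k1}, and your induction on $N$, including the check that the partial union is disjoint from the next term, is precisely that argument spelled out.
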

Theorem~\ref{th:FinAddKappa} provides us with an alternative proof
of Theorem~\ref{th:EP1}
\begin{corollary} 
Let $A$ and $B$ be subsets of $\mathbb{N}$, with counting sequences
$\kappa_A$ and $\kappa_B$. If $B\subset A$ then
$\kappa_B(n) \le \kappa_A(n)$ for all $n$.
Moreover, $\kappa_B \Elt \kappa_A$.
\end{corollary}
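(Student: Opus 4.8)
The plan is to deduce this corollary from Theorem~\ref{th:FinAddKappa} and Corollary~\ref{cor:k1}, rather than repeating the indicator-sequence argument of Theorem~\ref{th:EP1}. Suppose $B \subset A$. Write $A = B \uplus (A \setminus B)$, a disjoint union. Since $B$ is a \emph{proper} subset, the complement $C := A \setminus B$ is non-empty, so $C$ contains some element $N_0$, and hence $\kappa_C(n) \ge 1$ for all $n \ge N_0$. By Corollary~\ref{cor:k1}, $\kappa_A = \kappa_B + \kappa_C$, so for every $n$ we have $\kappa_A(n) = \kappa_B(n) + \kappa_C(n) \ge \kappa_B(n)$, which gives $\kappa_B(n) \le \kappa_A(n)$ for all $n$. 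Moreover, for all $n \ge N_0$ we have $\kappa_C(n) \ge 1$, hence $\kappa_A(n) \ge \kappa_B(n) + 1 > \kappa_B(n)$, which is precisely the statement $\kappa_B \Elt \kappa_A$.

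The key steps in order: (1) use properness of the inclusion to produce an element $N_0 \in A \setminus B$; (2) apply finite additivity for disjoint sets (Corollary~\ref{cor:k1}) to the decomposition $A = B \uplus (A\setminus B)$; (3) read off $\kappa_B \le \kappa_A$ from non-negativity of $\kappa_{A\setminus B}$; (4) read off eventual strict dominance from the fact that $\kappa_{A\setminus B}(n) \ge 1$ for $n \ge N_0$, appealing to the definition of $\Elt$.

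There is essentially no obstacle here: the statement is identical to Theorem~\ref{th:EP1}, and the only content of the corollary is that it also follows cheaply from the additive decomposition. The mildest point to be careful about is that $\kappa_{A\setminus B}$ is a genuine counting sequence and so is non-negative and non-decreasing with $\kappa_{A\setminus B}(n) \ge 1$ once $n$ reaches the least element of $A \setminus B$; this is immediate from the definition of $\kappa$ on page~\pageref{pg:chikap}. No further machinery is needed.
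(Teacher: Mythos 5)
Your proposal is correct and follows essentially the same route as the paper's own proof: decompose $A = B \uplus (A\setminus B)$, apply Corollary~\ref{cor:k1} to get $\kappa_A = \kappa_B + \kappa_{A\setminus B}$, and conclude. In fact you are slightly more careful than the paper's terse version, since you note explicitly that $\kappa_{A\setminus B}(n)\ge 1$ only once $n$ reaches the least element of $A\setminus B$, which is exactly what the conclusion $\kappa_B \Elt \kappa_A$ (as opposed to termwise strict inequality) requires.
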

\begin{proof}
It is clear that $\kappa_{A\setminus B}>0$. 
Since $A = B \uplus (A\setminus B)$, it follows from Corollory~\ref{cor:k1} 
that $\kappa_A = \kappa_B + \kappa_{A\setminus B} > \kappa_B$.
\end{proof}

For the remainder of this section, we write $\kappa(A)$ for $\kappa_A$, etc.
\begin{theorem}
\label{th:lotsofstuff}
(a) Let $A\comp = \mathbb{N}\setminus A$. Then $\kappa(A\comp) = \kappa(\mathbb{N}) - \kappa(A)$. \\
(b) If $a\in A$ then $\kappa(A\setminus\{a\}) = \kappa(A) - 1$. \\
(c) If $b\notin A$ then $\kappa(A\uplus\{b\}) = \kappa(A) + 1$. \\
(d) If $a\in A$ and $b\notin A$ then $\kappa((A\setminus\{a\})\uplus\{b\}) = \kappa(A)$. \\
(e) For $n$ distinct elements $\{a_1, a_2, \dots , a_n\}$ of $A$ we have
$\kappa(A\setminus \biguplus_{k=1}^n \{a_k\}) = \kappa(A) - n$. \\
(f) For $n$ distinct elements $\{b_1, b_2, \dots , b_n\}$ not in $A$ we have
$\kappa(A\uplus \biguplus_{k=1}^n \{b_k\}) = \kappa(A) + n$. \\
(g) If $B\subset A$ then $\kappa(A\setminus B) = \kappa(A) - \kappa(B)$. \\
(h) For any $B$, $\kappa(A\setminus B) = \kappa(A) - \kappa(A\cap B)$.
\label{th:pm1}
\end{theorem}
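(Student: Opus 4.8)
The plan is to reduce everything to the single identity proved in Theorem~\ref{th:FinAddKappa}, namely $\kappa(A\cup B) = \kappa(A)+\kappa(B)-\kappa(A\cap B)$, together with Corollary~\ref{cor:k1} ($\kappa(A\uplus B)=\kappa(A)+\kappa(B)$ for disjoint sets) and the elementary fact that $\kappa(\{x\}) = \chi_{\{x\}}$ is eventually equal to the constant sequence $1$. Since all the asserted equalities are between counting \emph{sequences} and $\kappa$ is additive on disjoint unions, each item is a short formal manipulation; I would present them in an order that lets later parts cite earlier ones.

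First I would do (a): $\mathbb{N} = A \uplus A\comp$, so Corollary~\ref{cor:k1} gives $\kappa(\mathbb{N}) = \kappa(A)+\kappa(A\comp)$, hence $\kappa(A\comp) = \kappa(\mathbb{N})-\kappa(A)$. For (c), if $b\notin A$ then $A\uplus\{b\}$ is a genuine disjoint union, so $\kappa(A\uplus\{b\}) = \kappa(A)+\kappa(\{b\}) = \kappa(A)+1$, using that $\kappa(\{b\})$ is the sequence with value $1$ for $n\ge b$; here I should remark that the ``$+1$'' is understood in the eventual sense (the two sequences agree for $n\ge b$), or else that we regard $\kappa(\{b\})$ as the all-ones sequence shifted — I'd state this convention once. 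Part (b) is the dual: $A = (A\setminus\{a\})\uplus\{a\}$, so $\kappa(A) = \kappa(A\setminus\{a\})+\kappa(\{a\})$, giving $\kappa(A\setminus\{a\}) = \kappa(A)-1$. Then (d) is immediate by composing (b) and (c), and (e), (f) follow by an obvious induction on $n$ (or by writing $\biguplus_{k=1}^n\{a_k\}$ and invoking Corollary~\ref{cor:k2}).

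For (g), assume $B\subset A$; then $A = B \uplus (A\setminus B)$ is a disjoint union, so Corollary~\ref{cor:k1} yields $\kappa(A) = \kappa(B)+\kappa(A\setminus B)$, i.e. $\kappa(A\setminus B) = \kappa(A)-\kappa(B)$. Part (h) is the general version: for arbitrary $B$ we have the disjoint decomposition $A = (A\setminus B)\uplus(A\cap B)$, so again $\kappa(A) = \kappa(A\setminus B)+\kappa(A\cap B)$, whence $\kappa(A\setminus B) = \kappa(A)-\kappa(A\cap B)$; and (g) is the special case $A\cap B = B$. I would probably present (h) before (g) to make (g) a one-line corollary.

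I do not expect a genuine obstacle here — the only delicate point, and the one I would be careful to state explicitly, is the bookkeeping around the constant-sequence ``$1$'' and the subtractions: strictly, $\kappa(A)-n$ denotes the componentwise difference of sequences, which is a legitimate sequence of non-negative integers precisely because $\kappa(A\setminus\{a_1,\dots,a_n\})$ exists as a counting sequence and the identity $\kappa(A) = \kappa(A\setminus\{a_1,\dots,a_n\}) + \sum_k \kappa(\{a_k\})$ holds termwise. So the ``hard part'' is purely expository: making clear that every ``$-1$'', ``$-n$'', ``$-\kappa(B)$'' is the exact termwise difference (not merely an eventual relation) and that non-negativity is automatic from the disjoint-union decomposition. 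Once that convention is fixed, all eight items are three-line consequences of Corollaries~\ref{cor:k1} and~\ref{cor:k2} and the value of $\kappa(\{x\})$.
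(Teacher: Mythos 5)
Your proposal is correct and follows essentially the same route as the paper: each part is obtained from the same disjoint decompositions ($\mathbb{N}=A\uplus A\comp$, $A=(A\setminus\{a\})\uplus\{a\}$, $A=(A\setminus B)\uplus B$, $A=(A\setminus B)\uplus(A\cap B)$) together with the additivity of $\kappa$ on disjoint unions, with (d)--(f) deduced from (b) and (c) by induction. Your extra remark that the ``$+1$'' identities are exact only once $\kappa(\{b\})$ is past its jump (i.e.\ in the eventual sense) is a careful point the paper glosses over, but it changes nothing since magnums depend only on eventual equality.
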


\begin{proof}
(a) Since 
$\mathbb{N} = A \uplus A\comp$, we have  
$\kappa(\mathbb{N}) = \kappa(A) + \kappa(A\comp)$. The result follows immediately.

(b) $A = (A\setminus\{a\}) \uplus \{a\}$. Therefore,
$\kappa(A) = \kappa(A\setminus\{a\}) + \kappa(\{a\}) = \kappa(A\setminus\{a\}) + 1$.

(c) If $b\notin A$ then $A\cap\{b\}=\emptyset$ and
$\kappa(A\uplus\{b\}) = \kappa(A) + \kappa(\{b\}) = \kappa(A) + 1$.

(d) This follows immediately from (b) and (c).

(e) This follows inductively from $\kappa(\{a_1, a_2, \dots , a_n\}) = n$ and (b).

(f) This follows inductively from $\kappa(\{b_1, b_2, \dots , b_n\}) = n$ and (c).

(g) $A = (A\setminus B) \uplus B$.
So, $\kappa(A) = \kappa(A\setminus B) + \kappa(B)$ or $\kappa(A\setminus B) = \kappa(A) - \kappa(B)$.

(h) $A = (A\setminus B) \uplus (A \cap B)$. 
So, $\kappa(A) = \kappa(A\setminus B) + \kappa(A \cap B)$ or $\kappa(A\setminus B) = \kappa(A) - \kappa(A\cap B)$.
\end{proof}


\subsection{Isobaric Equivalence}
\label{sec:isobar}

It is useful to consider partitions of infinite sets that are
comprised of finite components. In \cite{BeNa19}, infinite sets
are expressed as unions of `labelled sets'.  In \cite{Trl24},
the relevant partitions are called canonical arrangements.
In this section, we partition sets into unions of disjoint finite 
components all of which are equal in size. A specific class of
partitions, called fenestrations, is considered in \S\ref{sec:GIT}.

\begin{definition}
\label{def:regfen}
A \emph{simple partition} $\mathscr{W} = \{W_k:k\in\N\}$ 
is a collection of disjoint finite sets or `windows',
all of equal length $L$, that partition $\N$, such that
\begin{equation}
\mathbb{N} = \biguplus_{k\in\mathbb{N}} W_k \qquad\mbox{where}\qquad
W_k = \{ (k-1)L +1, \dots ,k L \} \,.
\label{eq:fenes}
\end{equation}
\end{definition}
The partition $\mathscr{W}$ of $\N$ induces a partition of any set
$A\in\mathscr{P}(\mathbb{N})$ into a countable union of disjoint,
finite subsets, $A_k = A\cap W_k$, with $A = \biguplus_k A_k$. 
We define a sequence of weights
\begin{equation}
\mathit{w}_A = \bigl( \fcard{A_1}, \fcard{A_2}, \dots , \fcard{A_k}, \dots \bigr) \,,
\label{eq:Aweights}
\end{equation}
whose $k$-th term gives the number of elements of $A$ in window $W_k$.

We now introduce an equivalence relation --- isobaric equivalence --- between sets having
equal weight sequences.  We denote isobary of two sets under a simple partition $\mathscr{W}$
by writing $A_1 \isobar A_2 (\text{mod\ }L)$, or simply $A_1 \isobar A_2$.%
\footnote{The term \emph{isobary} is from 
$\iota\sigma o\ \beta\alpha\rho o\varsigma$, `equal weight'.}

\begin{definition}
Two sets $A_1$, $A_2$ in $\mathscr{P}(\N)$ are \emph{isobaric} if,
for some simple partition $\mathscr{W}$, with window length $L\in\N$,
they have equal weight sequences; that is, if $\mathit{w}_{A_1} = \mathit{w}_{A_2}$.
\end{definition}
\begin{theorem}
Isobary is an equivalence relation.
\end{theorem}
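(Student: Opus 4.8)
The plan is to verify the three defining properties of an equivalence relation — reflexivity, symmetry, and transitivity — directly from the definition of isobary. Recall that $A_1 \isobar A_2$ means there is \emph{some} simple partition $\mathscr{W}$, with some window length $L \in \N$, for which the weight sequences agree: $\mathit{w}_{A_1} = \mathit{w}_{A_2}$. So the statement to prove is really about the relation obtained by existentially quantifying over simple partitions.

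First I would dispose of reflexivity and symmetry, which are immediate. For reflexivity, fix any simple partition $\mathscr{W}$ (say $L = 1$); then trivially $\mathit{w}_A = \mathit{w}_A$, so $A \isobar A$. For symmetry, if $A_1 \isobar A_2$ via the partition $\mathscr{W}$ with window length $L$, then $\mathit{w}_{A_1} = \mathit{w}_{A_2}$ is a symmetric condition on sequences, so the very same $\mathscr{W}$ witnesses $A_2 \isobar A_1$.

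The only step requiring thought is transitivity, and this is where the main obstacle lies: if $A_1 \isobar A_2$ is witnessed by a partition $\mathscr{W}$ with window length $L$, and $A_2 \isobar A_3$ is witnessed by a \emph{possibly different} partition $\mathscr{W}'$ with window length $L'$, the two witnesses need not coincide, so one cannot simply chain the equalities of weight sequences. The natural fix is to pass to a common refinement — or rather a common \emph{coarsening}: take $M = \operatorname{lcm}(L, L')$ (or simply $M = LL'$) and let $\mathscr{V}$ be the simple partition with window length $M$. Each window of $\mathscr{V}$ is a union of $M/L$ consecutive windows of $\mathscr{W}$ and also a union of $M/L'$ consecutive windows of $\mathscr{W}'$, so the weight of $A_i$ in a given $\mathscr{V}$-window is the sum of the corresponding $\mathscr{W}$-weights, and likewise the sum of the corresponding $\mathscr{W}'$-weights. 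Since $\mathit{w}_{A_1} = \mathit{w}_{A_2}$ term-by-term under $\mathscr{W}$, these block sums agree for $A_1$ and $A_2$; since $\mathit{w}_{A_2} = \mathit{w}_{A_3}$ term-by-term under $\mathscr{W}'$, the block sums agree for $A_2$ and $A_3$. Hence $\mathit{w}_{A_1} = \mathit{w}_{A_3}$ under $\mathscr{V}$, giving $A_1 \isobar A_3$.

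One small point worth checking carefully in writing this up is that the windows of the coarser partition $\mathscr{V}$ really do decompose cleanly into windows of both $\mathscr{W}$ and $\mathscr{W}'$; this is where the specific form $W_k = \{(k-1)L + 1, \dots, kL\}$ in Definition~\ref{def:regfen} is used — because the windows are consecutive intervals starting from $1$, the $j$-th window of $\mathscr{V}$ is exactly $\{(j-1)M+1, \dots, jM\}$, which is the disjoint union of windows $W_{(j-1)(M/L)+1}, \dots, W_{j(M/L)}$ of $\mathscr{W}$, and similarly for $\mathscr{W}'$. With that observation in hand the block-sum argument is routine, and the three properties together establish that isobary is an equivalence relation.
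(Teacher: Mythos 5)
Your proof is correct and follows essentially the same route as the paper: reflexivity and symmetry are immediate, and transitivity is obtained by passing to the coarser simple partition of window length $L L'$ (the paper uses $L_1 L_2$ exactly as you do), with your block-sum argument simply spelling out the detail the paper leaves implicit.
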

\begin{proof}
Isobary is obviously reflexive ($A_1\isobar A_1$) and symmetric
($A_1\isobar A_2\iff A_2\isobar A_1$).  It is also transitive:
if $A_1\isobar A_2$ with window length $L_1$ and $A_2\isobar A_3$
with window length $L_2$, then $A_1\isobar A_3$ with window length
$L_1 L_2$.  Thus, $\isobar$ is an equivalence relation.
\end{proof}
We will show later (Theorem~\ref{th:L-iso} in \S\ref{sec:GIT}) that
any two isobaric sets have equal magnums.

\newcommand{\vequal}{\rotatebox[origin=c]{90}{=}}
\renewcommand{\hat}{\widehat}
\renewcommand{\tilde}{\widetilde}




\section{Extension of the Counting Function}
\label{sec:extension}

We aim to define the magnum of a set $A\subseteq\N$ as the value of the counting
function of $A$ for argument $\omega$, so we must extend the domain of
$\kappa_A:\mathbb{N}\to\mathbb{N}_0$.  We now show how this can be done.
For any function, a structure is prescribed, that is, an algorithm,
recipe or rule by which a value is determined from a given input
argument.  A domain of definition must also be specified.
We consider extensions in which \emph{the structure, or functional form,
remains unchanged while the domain is enlarged.}

We assume that the functional form of $f:\mathbb{N}\to\mathbb{N}_0$
is known so that, for any $n\in\mathbb{N}$, we can evaluate $f(n)$.
Then we can extend $f$ to $\Nn$, so that, in particular, it may be evaluated
for argument $\omega$. 
In this manner, the counting function $\kappa_A$ can be extended to a nondecreasing
function on the surnatural numbers, $\hat{\kappa_A} : \Nn\to\Nn$ and we can
evaluate it to obtain the magnum of a set $A$ as $m(A) = \hat{\kappa_A}(\omega)$.

As discussed in the Introduction, there are difficulties in assuming a
Transfer Principle, which posits that properties that hold for real numbers
remain valid when \emph{transferred} to the surreal domain. 
The theory presented below depends only upon a much weaker assumption,
the Axiom of Extension which, in many cases of interest, is self-evident.

\subsection{The Extension of the Domain of Definition of Functions}

In many cases, a sequence $a:\mathbb{N}\to\mathbb{N}_0$
can be extended in an obvious and natural way to a sequence on the surnatural numbers,
$\hat a:\Nn\to\Nn$. The simplest example is a sequence $\mathrm{c}_k(n) = k$,
which is constant for all $n\in\mathbb{N}$; this extends to the constant function
$\hat{\mathrm{c}_k}(\nu)=k$ for all $\nu\in\Nn$.
The identity function $\mathrm{id}_\mathbb{N}(n)=n$
extends to $\mathrm{id}_{\Nn}(\nu)=\nu$ for all $\nu\in\Nn$.
Equally obvious are the extensions of polynomial functions, for which
$a(n)=\sum_{j=1}^k c_j n^j$ on $\mathbb{N}$ becomes
$\hat a(\nu)=\sum_{j=1}^k c_j \nu^j$ on $\Nn$.
The inverse of a strictly increasing polynomial also extends canonically so that,
for example, if $f(n)=k n^2$ with inverse $f^{-1}(n) = \sqrt{n/k}$, the inverse of the
extension is $\hat{f}^{-1}(\nu) = \sqrt{\nu/k}$.%

It must be noted that the extension of a function to a larger domain may not be unique.
For example, the identity function and the modulus or absolute value function
are identical on $\N$ but differ on $\Z$. To avoid ambiguity, it is essential
to specify precisely the \emph{form of the function} --- or the algorithm --- 
for which the domain is to be extended.

The counting sequence of a finite set $F$ with cardinality $M$ is
eventually constant.  The {extension} of $\kappa_{F} : \mathbb{N}\to\mathbb{N}_0$
is a function $\hat{\kappa_{F}} : \Nn\to\Nn$ such that
$$
\hat{\kappa_{F}}(\nu) = 
\begin{cases}
\kappa_{F}(\nu) & \text{for all\ }\nu \in\mathbb{N} \,, \\
    M       & \text{for all\ }\nu \in \Nn\setminus\mathbb{N} \,.
\end{cases} 
$$


\subsubsection*{The Axiom of Extension}

We denote the extension of a function $f:\N\to\N_0$ by $\widehat f:\Nn\to\Nn$.
To ensure that key properties of functions on the domain $\N$ extend to the
domain $\Nn$ of surnatural numbers, we need to posit an axiom:

\textsc{The Axiom of Extension:}
\label{ax:exten}
{\em 
For any nondecreasing functions $f:\N\to\N_0$ and $g:\N\to\N_0$, there exist
extensions $\hat f:\Nn\to\Nn$ and $\hat g:\Nn\to\Nn$ such that
\begin{eqnarray*}
f(n) \Eeq g(n) &\implies& \hat f(\nu) = \hat g(\nu) \text{\ for\ } \nu\in\Nn\setminus\N \\
f(n) \Elt g(n) &\implies& \hat f(\nu) < \hat g(\nu) \text{\ for\ } \nu\in\Nn\setminus\N 
\end{eqnarray*}
and the extensions preserve sums, products and (where defined) compositions:
\begin{equation}
\widehat{(f+g)} = \widehat{f} + \widehat{g} ,
\qquad
\widehat{(f\cdot g)} = \widehat{f} \cdot \widehat{g} 
\qquad\text{and}\qquad
\widehat{(f\circ g)} = \widehat{f} \circ \widehat{g} .
\nonumber
\end{equation}
}  

The extension of some transcendental functions from the real to the surreal domain
is discussed in \cite{ONAG} and in \cite{RSS14}.  In the applications below,
we envoke the Axiom of Extension only for nondecreasing functions.
If attention is confined to polynomial, logarithmic and exponential functions,
the conditions above are automatically satisfied and \emph{no axiom is required.}

\subsubsection*{There are no Limits in $\Nn$}

The extension of a function $f:\mathbb{N}\to\mathbb{N}$
to $\hat f:\Nn\to\Nn$ can lead to surprising consequences.
As Conway observed \cite[pg.~42]{ONAG}, ``limits don't seem to work \dots [in $\No$]''.
Grandi's series is $ G = 1 - 1 + 1 - 1 + \cdots $, which does not converge.
The sequence of partial sums is $ S(n) = (1,0,1,0, \dots) = (1 - (-1)^n)/{2}$.
This extends in an obvious way to the domain of surnatural numbers,
$\hat S(\nu) = (1 - (-1)^\nu)/{2}$ and, since $\omega$ is
an even number, $\hat S(\omega) = 0$. Thus, while $\lim_{n\to\infty} S(n)$
does not exist, the extension of $S$ has a definite value at $\omega$.

\subsection{The Extension of Subsets of $\mathbb{N}$ to Subsets of $\Nn$}

For any function $f:\mathbb{N}\to\mathbb{N}_0$, 
the Axiom Schema of Separation \cite[p.~7]{Jech02} allows us to express the
image set $f(\mathbb{N})$ as $A = \{m\in\mathbb{N}_0 : \exists n\ (n\in\mathbb{N})\land(m=f(n)) \}$,
while the Axiom Schema of Replacement \cite[p.~13]{Jech02} allows us to
express the set as $A = \{ f(n):n\in\mathbb{N}\}$. Under the ZF axioms, these sets
are identical; we may alternate between the two formulations as convenient.
%
%
\begin{definition}
If $f$ extends to $\hat f:\Nn\to\Nn$, the \emph{extension} of the set
$A = \{ f(n):n\in\mathbb{N}\}$ is $\hat A = \{\hat f(\nu):\nu\in\Nn\}$.
Alternatively, the same set expressed as
$A = \{m\in\mathbb{N}_0 : \exists n\ (n\in\mathbb{N})\land(m=f(n))\}$ extends to
$\hat A = \{\mu\in\Nn : \exists\nu\ (\nu\in\Nn) \land (\mu=\hat f(\nu))\}$. 
\label{def:Ahat}
\end{definition}
Clearly, $\hat{\mathbb{N}} = \Nn$.
Other simple examples are 
$\hat{2\mathbb{N}} = 2\Nn$ and $\hat{2\mathbb{N}-1} = 2\Nn-1$.%
\footnote{The concept of parity extends naturally from
$\N$ to $\Nn$. In particular, the surnatural number $\omega$ is even.}
\begin{theorem}
\label{th:AhatBhat}
For $A$ and $B$ in $\mathscr{P}(\mathbb{N})$,
with extensions $\hat A$ and $\hat B$ to $\Nn$, we have:
\begin{itemize}
\item[(a)] $\hat{A\cap B} = \hat A \cap \hat B$.
\item[(b)] $\hat{A\cup B} = \hat A \cup \hat B$.
\item[(c)] ${A\subseteq B} \iff \hat A \subseteq \hat B$.
\item[(d)] ${\hat{(A\comp)}} = (\hat A)\Comp = \Nn\setminus \hat A$.
\item[(e)] ${\hat{(A\setminus B)}} = \hat A\setminus\hat B$.
\end{itemize}
\label{th:hatcap}
\end{theorem}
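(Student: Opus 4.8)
The plan is to reduce all five identities to elementary two-valued algebra and to carry that algebra from $\N$ to $\Nn$ by the Axiom of Extension. The organising observation is that a subset $A\subseteq\N$ and its extension are governed by the indicator sequence $\chi_A$: writing $A=\{n\in\N:\chi_A(n)=1\}$ and letting $\hat\chi_A:\Nn\to\Nn$ be its extension (which by definition agrees with $\chi_A$ on $\N$), Definition~\ref{def:Ahat} identifies $\hat A$ with $\{\nu\in\Nn:\hat\chi_A(\nu)=1\}$; in particular $\hat A\cap\N=A$. The one preliminary fact I would record is that $\hat\chi_A$ is again two-valued: the identity $\chi_A^2=\chi_A$ holds on $\N$, so the product-preservation clause of the axiom gives $\hat\chi_A^2=\hat\chi_A$ on $\Nn$, i.e.\ $\hat\chi_A(\mu)\bigl(\hat\chi_A(\mu)-1\bigr)=0$; since $\Nn$ lies inside the field $\No$, which has no zero divisors, $\hat\chi_A(\mu)\in\{0,1\}$ for every $\mu$. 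The same applies to $\hat\chi_B$, $\hat\chi_{A\cap B}$, $\hat\chi_{A\comp}$, $\hat\chi_{B\setminus A}$ and so on.

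Granting this, I would prove (a) and (d) directly and derive the rest. For (a): $\chi_{A\cap B}=\chi_A\cdot\chi_B$ on $\N$, hence $\hat\chi_{A\cap B}=\hat\chi_A\cdot\hat\chi_B$ on $\Nn$, and a product of two $\{0,1\}$-values equals $1$ exactly when both are $1$, which is precisely $\mu\in\hat A\cap\hat B$. For (d): the identities $\chi_A+\chi_{A\comp}=1$ and $\chi_A\cdot\chi_{A\comp}=0$ on $\N$ extend to $\hat\chi_A+\hat\chi_{A\comp}=1$ and $\hat\chi_A\cdot\hat\chi_{A\comp}=0$ on $\Nn$ (the right-hand sides being extensions of constant functions), and together these force $\hat\chi_{A\comp}(\mu)=1-\hat\chi_A(\mu)$; hence $\mu\in\hat{A\comp}\iff\hat\chi_A(\mu)=0\iff\mu\notin\hat A$. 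With (a) and (d) in hand, (b) and (e) are pure set algebra inside $\Nn$: from $A\cup B=(A\comp\cap B\comp)\comp$ one obtains $\hat{A\cup B}=\Nn\setminus\bigl((\Nn\setminus\hat A)\cap(\Nn\setminus\hat B)\bigr)=\hat A\cup\hat B$, and from $A\setminus B=A\cap B\comp$ one obtains $\hat{A\setminus B}=\hat A\cap(\Nn\setminus\hat B)=\hat A\setminus\hat B$. For (c), the forward direction uses $B=A\uplus(B\setminus A)$ when $A\subseteq B$, so $\hat\chi_B=\hat\chi_A+\hat\chi_{B\setminus A}$ with $\hat\chi_{B\setminus A}\ge 0$; thus $\hat\chi_A(\mu)=1$ forces $\hat\chi_B(\mu)=1$, i.e.\ $\hat A\subseteq\hat B$. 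The converse merely restricts the inclusion to $\N$ and uses $\hat A\cap\N=A$, $\hat B\cap\N=B$.

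The step I expect to need the most care is not any single computation but the two pieces of bookkeeping that license them. First, one must confirm that extending the \emph{set} $A$ in the sense of Definition~\ref{def:Ahat} coincides with extending the indicator sequence $\chi_A$ and taking the preimage of $1$ (and that $\hat\chi_A$ restricts to $\chi_A$ on $\N$); I would settle this once, at the outset. Second, the Axiom of Extension is phrased existentially, so within a single identity one must choose mutually compatible extensions: one fixes extensions of the basic indicators $\chi_A$ and $\chi_B$, and then the sum-, product- and composition-preservation clauses pin down the extensions of every derived indicator ($\chi_{A\cap B}$, $\chi_{A\comp}$, $\chi_{B\setminus A}$, and the constants $0,1$), so no inconsistency can arise. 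Everything past that is routine arithmetic with $0$'s and $1$'s in $\No$.
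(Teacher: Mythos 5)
Your route is genuinely different from the paper's (the paper represents $A$ and $B$ as image sets of their defining sequences, $\hat A=\{\mu\in\Nn:\exists\nu\in\Nn\ (\mu=\hat a(\nu))\}$, and proves (a) by a purely logical manipulation of the defining predicates, remarking that the other parts are similar), but as written your argument has a real gap at exactly the point you flag and then defer. Definition~\ref{def:Ahat} defines $\hat A$ as the \emph{image} of $\Nn$ under the extension of the enumerating function $a_A$, not as the preimage of $1$ under an extended indicator sequence. The identification $\hat A=\{\nu\in\Nn:\hat\chi_A(\nu)=1\}$ is the whole content of your reduction, and it is not something the paper's definitions give you; saying you "would settle this once, at the outset" leaves the crux unproved. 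Worse, the tool you invoke to do the settling is unavailable: the Axiom of Extension is stated only for \emph{nondecreasing} functions $f:\N\to\mathbb{N}_0$ (and the paper explicitly says it invokes the axiom only for such functions), whereas $\chi_A$, $\chi_{A\comp}$, $\chi_{A\cap B}$, etc.\ are not nondecreasing for any set that is infinite and co-infinite. So the steps $\hat\chi_A^2=\hat\chi_A$, $\hat\chi_{A\cap B}=\hat\chi_A\cdot\hat\chi_B$, $\hat\chi_A+\hat\chi_{A\comp}=1$ are not licensed by the axiom: you have neither a guarantee that these indicator sequences extend at all, nor that any extensions satisfy the sum/product-preservation clauses, nor that they restrict to the original sequences on $\N$.

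If the indicator-algebra framework were granted, the remaining computations (two-valuedness via no zero divisors in $\No$, deriving (b), (c), (e) from (a) and (d)) are fine, and the package would arguably be tidier and more uniform than the paper's predicate manipulation. But to make it work inside this paper you would first have to prove a lemma of the form: for every $A\in\mathscr{P}(\N)$ there is an extension $\hat\chi_A$ compatible with the extensions of the monotone enumerations such that $\{\nu\in\Nn:\hat\chi_A(\nu)=1\}=\{\hat a_A(\nu):\nu\in\Nn\}$, together with an extension principle covering non-monotone two-valued sequences. Neither is available from the stated Axiom of Extension, so the proposal as it stands does not close. The paper's own proof avoids this by never extending indicators: it stays with the strictly increasing defining sequences, to which the axiom does apply, and handles the Boolean operations at the level of the defining formulas.
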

\begin{proof}
(a)
The proof uses the set-theoretic relationship
$\{\mu:P_1(\mu)\} \cap \{\mu:P_2(\mu)\} = \{\mu:P_1(\mu)\land P_2(\mu)\}$.
We write 
$A = \{m\in\mathbb{N} :\exists n\in\N\ (m=a(n))\}$ and
$B = \{m\in\mathbb{N} :\exists n\in\N\ (m=b(n))\}$.
Thus,
$\hat A = \{\mu\in\Nn:\exists \nu\in\Nn\ (\mu=\hat a(\nu))\}$ and
$\hat B = \{\mu\in\Nn:\exists \nu\in\Nn\ (\mu=\hat b(\nu))\}$.
Then
\begin{eqnarray*}
\hat{A\cap B}
&=& 
\{\mu\in\Nn:[\exists\nu_1\in\Nn\ (\mu=\hat a(\nu_1))]\land
            [\exists\nu_2\in\Nn\ (\mu=\hat b(\nu_2))]\} \\
&=& 
\{\mu\in\Nn:\exists \nu_1\in\Nn\ (\mu=\hat a(\nu_1)) \} \cap
\{\mu\in\Nn:\exists \nu_2\in\Nn\ (\mu=\hat b(\nu_2)) \} \\
&=& 
\hat A \cap \hat B \,.
\end{eqnarray*}
The remaining parts of the theorem are proved in a similar manner.
\end{proof}
Theorem~\ref{th:AhatBhat} corresponds to Proposition 2.15 in \cite[p.~21]{BeNa19},
where proofs in the context of the hyperreal numbers are given.


\subsubsection*{Omega Sets}

\begin{definition}
A set $A\subseteq\mathbb{N}$ is an \emph{omega set} 
if and only if $\omega\in\hat A$.
The family of all sets in $\mathscr{P}(\mathbb{N})$ that are omega sets is denoted by $\BOM$.
\label{def:omset}
\end{definition}

From this definition and Theorem~\ref{th:AhatBhat} it follows that if $A\subseteq\mathbb{N}$
is an omega set, then $A\comp$ is not. For example, $2\mathbb{N} = \{ 2n:n\in\mathbb{N}\}$,
so $\hat{2\mathbb{N}} = \{ 2\nu:\nu\in\Nn\}$ and, since $\nu=\omega/2\in\Nn$,
$\omega\in\hat{2\mathbb{N}}$.
However, $2\mathbb{N}-1 = \{ 2n-1:n\in\mathbb{N}\}$,
so $\hat{2\mathbb{N}-1} = \{ 2\nu-1:n\in\Nn\}$,
but $2\nu-1=\omega \implies \nu = \frac{\omega}{2}+\frac{1}{2} \not\in\Nn$ so
$\omega\not\in\hat{2\mathbb{N}-1}$.  Thus, the set of even natural
numbers is an omega set, whereas the set of odd natural numbers is not.

Benci \&\ DiNasso \cite[p.~221]{BeNa19} constructed a model for their Numerosity Theory
by means of an equivalence relation on $\mathbb{R}^{*}=\mathbb{R}^\mathbb{N}/\mathcal{U}$,
where $\mathcal{U}$ is a free ultrafilter on $\mathbb{N}$.
Although we do not explore this approach below, it is of interest to observe that
the family $\BOM$ is a free ultrafilter on $\mathbb{N}$. 
To prove this, it must be shown that:
\begin{itemize}
\item[(a)] $\varnothing\not\in\BOM$ and $\mathbb{N}\in\BOM$.
\item[(b)] $(A\in\BOM)\land(B\supseteq A)\implies B\in\BOM$.
\item[(c)] $A, B \in\BOM \implies (A\cap B)\in\BOM$.
\item[(d)] If $A\in\BOM$ and $A=\biguplus_{k=1}^n A_k$,
           then $A_j\in\BOM$ for exactly one $j$.
\item[(e)] $A\in\BOM \iff A\comp\not\in\BOM$.
\item[(f)] For $A$ finite, $A\not\in\BOM$.
\end{itemize}
All of these properties follow directly from the definition of omega sets,
so we may state the Ultrafilter Theorem:
\begin{theorem}
The family $\BOM$ of all sets of natural numbers that are omega sets
is a free ultrafilter on $\mathbb{N}$.
\label{th:UltraFilter}
\end{theorem}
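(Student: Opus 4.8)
The plan is to verify the six ultrafilter axioms (a)--(f) listed just before the theorem statement, each being a short deduction from Definition~\ref{def:omset} together with Theorem~\ref{th:AhatBhat}. I would organize the proof as six numbered paragraphs matching items (a)--(f), and conclude by noting that (a)--(c) establish $\BOM$ is a filter, (e) upgrades it to an ultrafilter, and (f) shows it contains no finite set, hence is free (non-principal).

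First, for (a): $\widehat\emptyset=\emptyset$ (the empty function extends to the empty function, so no $\nu$ satisfies the defining predicate), hence $\omega\notin\widehat\emptyset$; and $\widehat{\N}=\Nn$ by the remark after Definition~\ref{def:Ahat}, so $\omega\in\widehat{\N}$. For (b): if $A\subseteq B$ then $\widehat A\subseteq\widehat B$ by Theorem~\ref{th:AhatBhat}(c), so $\omega\in\widehat A$ forces $\omega\in\widehat B$. For (c): $\widehat{A\cap B}=\widehat A\cap\widehat B$ by Theorem~\ref{th:AhatBhat}(a), so if $\omega$ lies in both $\widehat A$ and $\widehat B$ it lies in $\widehat{A\cap B}$. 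For (e): by Theorem~\ref{th:AhatBhat}(d), $\widehat{A\comp}=\Nn\setminus\widehat A$, so $\omega\in\widehat{A\comp}\iff\omega\notin\widehat A$, i.e.\ $A\comp\in\BOM\iff A\notin\BOM$. For (f): if $A$ is finite with cardinality $M$, its counting sequence is eventually constant and the extension of $\kappa_A$ satisfies $\widehat{\kappa_A}(\nu)=M$ for $\nu\in\Nn\setminus\N$; more directly, $\widehat A$ is a finite subset of $\N$ (the extension of a finite list of naturals is that same list), so $\omega\notin\widehat A$ and $A\notin\BOM$. Items (a), (e), (f) together give that $\BOM$ is a free ultrafilter provided we also have the filter property, which is (a)--(c); finite additivity (d) is then a consequence of (b), (c) and (e) by induction on $n$, since exactly one of the $A_k$ can be an omega set (at most one by (c) applied to disjoint sets, whose intersection is empty and hence not in $\BOM$ by (a); at least one by (e) and induction, since $A=\biguplus A_k\in\BOM$ means $A\comp\notin\BOM$, and if every $A_k\notin\BOM$ then $A_k\comp\in\BOM$ and a finite intersection argument yields $A\comp\in\BOM$, a contradiction).

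The only point requiring a little care --- and the step I would flag as the potential obstacle --- is the claim that the extension operation $A\mapsto\widehat A$ is well-defined enough for these arguments, i.e.\ that $\omega\in\widehat A$ is an unambiguous property of $A$ rather than of a chosen presentation $A=\{f(n):n\in\N\}$. The paper has already remarked (after Definition~\ref{def:Ahat}) that extensions need not be unique, so strictly one should observe that for subsets of $\N$ the relevant extensions are pinned down by the Axiom of Extension applied to the (nondecreasing) defining sequence $a_A$, or equivalently by Theorem~\ref{th:AhatBhat}, which is what makes the Boolean identities (a)--(e) available; I would simply invoke Theorem~\ref{th:AhatBhat} and Definition~\ref{def:omset} as given and not belabor this. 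With that in hand, every one of (a)--(f) is a one-line deduction, and the theorem follows immediately by collecting them.
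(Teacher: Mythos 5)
Your proposal is correct and follows essentially the same route as the paper, which simply asserts that properties (a)--(f) ``follow directly from the definition of omega sets'': you verify each item via Definition~\ref{def:omset} and Theorem~\ref{th:AhatBhat}, exactly as intended, and your derivation of (d) from (b), (c), (e) and your remark on pinning down $\hat A$ are reasonable fillings-in of details the paper leaves implicit.
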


Omega sets as defined above are the counterparts of the \emph{qualified sets} of
Benci and DiNasso and Theorem~\ref{th:UltraFilter} corresponds to their
Proposition 2.22 \cite[pg.~24]{BeNa19}.
However, in the context of the surnatural numbers,
$\omega$ is \emph{even} since $\frac{\omega}{2}\in\Nn$. There is no automatic
guarantee that the hyperreal number $\boldsymbol{\alpha}$ of Benci \emph{et al.}\
is even, so an additional assumption, the Qualified Set Axiom, is required
\cite[p.~36]{BeNa19}.


\section{Definition of the Magnum}
\label{sec:DefMag}


We are now in a position to define the magnum of a set $A$ of natural numbers:
\begin{definition}
\label{def:magnum}
Let $A$ be a set of natural numbers, $\kappa_A(n)$ the counting function of $A$ and
$\hat{\kappa_A}(\nu)$ the extension of this function to $\Nn$.  Then the magnum of $A$
is defined by
\begin{equation}
\boxed{
m(A): = \hat{\kappa_A}(\omega) \,.
\label{eq:MagDef}
}  
\end{equation}
\end{definition}

The Axiom of Extension implies that the counting function of any set
$A\in\mathscr{P}(\N)$
can be extended, so that its magnum is given by (\ref{eq:MagDef}).

\subsection{Some Theorems for Magnums of Sets in $\mathscr{P}(\N)$}
\label{sec:M-theorems}

In \S\ref{sec:SomeTheorems} we stated and proved several theorems for 
counting sequences of sets in $\mathscr{P}(\mathbb{N})$.
We now consider the corresponding results for set magnums.
We begin by showing that changing a finite number of terms in the counting
sequence of a set has no effect on the magnum of the set.
\begin{theorem}
Let $A$ and $B$ be two subsets of $\mathbb{N}$ such that
$\kappa_A \Eeq \kappa_B$.  Then $m(A) = m(B)$.
\label{th:EventualEquality}
\end{theorem}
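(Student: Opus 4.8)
The plan is to reduce the claim directly to the Axiom of Extension. Given $A$ and $B$ with $\kappa_A \Eeq \kappa_B$, both $\kappa_A:\N\to\N_0$ and $\kappa_B:\N\to\N_0$ are nondecreasing functions (as noted on p.~\pageref{pg:chikap}, every counting sequence is nondecreasing). So the first hypothesis of the Axiom of Extension applies: from $\kappa_A \Eeq \kappa_B$ we obtain extensions $\hat{\kappa_A}$ and $\hat{\kappa_B}$ with $\hat{\kappa_A}(\nu) = \hat{\kappa_B}(\nu)$ for all $\nu \in \Nn\setminus\N$.

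Next I would observe that $\omega \in \Nn\setminus\N$, since $\omega$ is a surnatural number that is not a (finite) natural number. Hence, specializing the equality at $\nu = \omega$, we get $\hat{\kappa_A}(\omega) = \hat{\kappa_B}(\omega)$. By Definition~\ref{def:magnum}, the left side is $m(A)$ and the right side is $m(B)$, so $m(A) = m(B)$, which is exactly the assertion.

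The only subtlety worth flagging — and I expect this to be the main (mild) obstacle — is the non-uniqueness of extensions: the Axiom of Extension guarantees the \emph{existence} of extensions with the stated compatibility properties, but a priori different extensions of $\kappa_A$ might disagree at $\omega$. To keep the magnum well-defined one should understand Definition~\ref{def:magnum} as referring to the canonical extension furnished by the Axiom of Extension (the same one used throughout, built from the specified functional form of $\kappa_A$), and then the argument above goes through verbatim. I would make this dependence explicit in a sentence, noting that for the counting functions arising here — polynomial, logarithmic, or exponential in form, per the remark following the Axiom — the extension is in fact forced and no choice is involved. Thus the proof is essentially a two-line invocation of the Axiom plus the observation $\omega \notin \N$; no computation is required.
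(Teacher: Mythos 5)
Your proof is correct and follows essentially the same route as the paper: invoke the Axiom of Extension on the eventually equal (nondecreasing) counting sequences to get $\hat{\kappa_A}(\nu)=\hat{\kappa_B}(\nu)$ for $\nu\in\Nn\setminus\N$, then evaluate at $\nu=\omega$ and apply Definition~\ref{def:magnum}. Your added remark about the non-uniqueness of extensions is a reasonable caveat but not part of the paper's argument, which simply uses the extension furnished by the axiom.
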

\begin{proof}
By hypothesis, there exists $N\in\N$ such that $\kappa_A(n) = \kappa_B(n)$
for $n\ge N$. Appealing to the Axiom of Extension, it follows that
$\hat{\kappa_A}(\nu) = \hat{\kappa_B}(\nu)$ for $\nu\in\Nn\setminus\N$, so that
$\hat{\kappa_A}(\omega) = \hat{\kappa_B}(\omega)$ and therefore $m(A) = m(B)$.
\end{proof}
It follows immediately that finite sets having the same cardinality have equal magnums.
\begin{corollary}
Let $A$ and $B$ be two finite subsets of $\mathbb{N}$,
both having the same cardinality $M$. Then $m(A) = m(B)$.
\end{corollary}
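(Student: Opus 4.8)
The plan is to deduce this corollary directly from Theorem~\ref{th:EventualEquality}, so the only work is to observe that two finite sets of the same cardinality have eventually equal counting sequences. First I would note that if $A\subseteq\mathbb{N}$ is finite with $\fcard{A} = M$, then by equation~(\ref{eq:finhash}), $\kappa_A(n) = \fcard{A\cap I_n}$, and once $n$ exceeds $\max A$ every element of $A$ lies in $I_n$, so $\kappa_A(n) = M$ for all such $n$. Hence $\kappa_A$ is eventually constant with value $M$.

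Now let $A$ and $B$ be two finite subsets of $\mathbb{N}$, each of cardinality $M$. By the preceding observation, there is an $N_A$ with $\kappa_A(n) = M$ for $n\ge N_A$, and an $N_B$ with $\kappa_B(n) = M$ for $n\ge N_B$. Taking $N = \max\{N_A, N_B\}$, we have $\kappa_A(n) = M = \kappa_B(n)$ for all $n\ge N$, which is precisely the statement that $\kappa_A \Eeq \kappa_B$. Theorem~\ref{th:EventualEquality} then gives $m(A) = m(B)$ immediately.

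There is essentially no obstacle here: the corollary is a one-line consequence of Theorem~\ref{th:EventualEquality} together with the elementary fact (already recorded in the text after equation~(\ref{eq:finhash})) that the counting sequence of a finite set is eventually equal to its cardinality. The only thing worth stating carefully is the uniform choice of the threshold $N$, which is handled by taking the maximum of the two individual thresholds. If one wanted to be maximally explicit one could even name $N = \max\bigl(\max A, \max B\bigr) + 1$, but that level of detail is unnecessary.
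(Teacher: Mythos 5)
Your proposal is correct and follows essentially the same route as the paper: both arguments reduce the corollary to the observation that the two counting sequences are eventually equal to the common value $M$, the paper appealing directly to the Axiom of Extension while you invoke Theorem~\ref{th:EventualEquality}, which encapsulates exactly that appeal. Your explicit choice of the threshold $N = \max\{N_A, N_B\}$ is a harmless refinement of the paper's ``there exists $N$'' step.
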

\begin{proof}
The counting sequences of the two sets are 
$$
\kappa_A(n) = (\fcard{A\cap I_n}) \qquad\text{and}\qquad \kappa_B(n) = (\fcard{B\cap I_n}) 
$$
where $I_n = \{1,2,\dots,n\}$.
Since $\fcard{A}=\fcard{B}=M$, there exists $N\in\N$ such that $\kappa_A(n) = \kappa_B(n)$
for all $n\ge N$.  Appealing to the Axiom of Extension, it follows that
$\hat{\kappa_A}(\omega) = \hat{\kappa_B}(\omega)$ so that, using (\ref{eq:MagDef}),
$m(A) = m(B) = M$.
\end{proof}


\begin{theorem}
Let $A$ and $B$ be two subsets of $\mathbb{N}$ with counting sequences
$\kappa_A(n)$ and $\kappa_B(n)$, such that
$\forall n\ge N, \kappa_A(n) \ne \kappa_B(n)$. 
Then $m(A) \ne m(B)$.
\label{th:EventualInequality}
\end{theorem}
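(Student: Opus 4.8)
The plan is to reduce the hypothesis to one of the two implications furnished by the Axiom of Extension. That axiom guarantees that $\kappa_A \Elt \kappa_B$ forces $\hat{\kappa_A}(\nu) < \hat{\kappa_B}(\nu)$ for every $\nu \in \Nn \setminus \N$, hence in particular $m(A) = \hat{\kappa_A}(\omega) < \hat{\kappa_B}(\omega) = m(B)$; the same conclusion with the inequality reversed holds if instead $\kappa_B \Elt \kappa_A$. In either case $m(A) \ne m(B)$. So it suffices to show that the stated hypothesis — that $\kappa_A(n) \ne \kappa_B(n)$ for all $n \ge N$ — implies that one of the two counting sequences is eventually strictly below the other.

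The hard part, and really the only genuine content, is converting the symmetric assumption that $\kappa_A(n)\ne\kappa_B(n)$ for all large $n$ into one of the ordered relations $\Elt$. Here I would invoke the elementary fact recorded earlier that every counting sequence is nondecreasing with unit increments, $\kappa(n+1) \in \{\kappa(n), \kappa(n)+1\}$. Setting $d(n) := \kappa_A(n) - \kappa_B(n)$, this gives $d(n+1) - d(n) \in \{-1, 0, 1\}$. A discrete intermediate-value argument then finishes the step: if there were indices $n_1, n_2 \ge N$ with $d(n_1) < 0 < d(n_2)$, then, since $d$ changes by at most one unit per step, it would have to equal $0$ at some index lying between $n_1$ and $n_2$ — an index that is itself $\ge N$ — contradicting $\kappa_A(n) \ne \kappa_B(n)$ for all $n \ge N$. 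Hence $d(n)$ keeps a constant, nonzero sign on $\{n \in \N : n \ge N\}$: either $\kappa_A(n) < \kappa_B(n)$ for all $n \ge N$, i.e.\ $\kappa_A \Elt \kappa_B$, or else $\kappa_B \Elt \kappa_A$.

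With the sequences now placed in an order relation, I would simply apply the matching case of the Axiom of Extension, as in the first paragraph, to conclude $m(A) \ne m(B)$. Essentially all the work is in the discrete intermediate-value step; once the ``increments by at most one'' observation is in place it is routine, and it is precisely what bridges the gap between the hypothesis of this theorem and the form in which the Axiom of Extension is stated. I would also note that this result is, in a sense, a companion to Theorem~\ref{th:EventualEquality}: that theorem says $\kappa_A \Eeq \kappa_B$ yields equal magnums, while this one says that counting sequences which eventually always disagree yield distinct magnums.
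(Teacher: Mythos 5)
Your proposal is correct and follows essentially the same argument as the paper: the unit-increment property of counting sequences together with a discrete intermediate-value argument forces the difference $\kappa_A-\kappa_B$ to keep a constant nonzero sign, and the Axiom of Extension then transfers the strict inequality to $\omega$. The only difference is organizational: you run the sign argument directly on indices $n\ge N$ and conclude $\kappa_A\Elt\kappa_B$ or $\kappa_B\Elt\kappa_A$ outright, whereas the paper first treats disagreement at every $n$ and then disposes of the finitely many exceptional terms by appealing to Theorem~\ref{th:EventualEquality}; your version is, if anything, slightly tidier.
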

\begin{proof}
%
%
First let us assume that $\kappa_A(n) \ne \kappa_B(n)$ for all $n$.
Since $\kappa_A(n)$ and $\kappa_B(n)$ increase in unit steps, the difference
$\kappa_A(n)-\kappa_B(n)$ can only change by a unit for each increment of $n$,
so that either $\kappa_A(n)<\kappa_B(n)$ for all $n$ or $\kappa_A(n)>\kappa_B(n)$
for all $n$.  Consequently, 
$\forall \nu\in\Nn, (\widehat{\kappa_A}(\nu)<\widehat{\kappa_B}(\nu))
              \lor(\widehat{\kappa_A}(\nu)>\widehat{\kappa_B}(\nu))$, 
so that $m(A) \ne m(B)$.

Since, by Theorem~\ref{th:EventualEquality}, changing a finite number of terms in
a counting sequence has no effect on the magnum of the set, we conclude that if
$\kappa_A(n) \ne \kappa_B(n)$ for $n\ge N$, then $m(A) \ne m(B)$.
\end{proof}


We now prove the Euclidean Principle, ensuring that the magnum of
a proper subset $B$ of any set $A$ is less than the magnum of $A$ itself.

\begin{theorem} 
\label{th:Theorem-M1}
Let $A$ and $B$ be subsets of $\mathbb{N}$ with $B\subset A$. Then $m(B) < m(A)$.
\end{theorem}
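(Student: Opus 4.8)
The plan is to combine Theorem~\ref{th:EP1} (which controls the counting sequences) with the Axiom of Extension (which transports the strict inequality to argument $\omega$). Since $B\subset A$ is a proper subset of $\N$, Theorem~\ref{th:EP1} gives us $\kappa_B(n) \le \kappa_A(n)$ for all $n$, together with $\kappa_B \Elt \kappa_A$, i.e.\ there exists $N$ with $\kappa_B(n) < \kappa_A(n)$ for all $n \ge N$. Both $\kappa_A$ and $\kappa_B$ are nondecreasing functions $\N\to\N_0$, so the Axiom of Extension applies directly.

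First I would invoke Theorem~\ref{th:EP1} to record that $\kappa_B \Elt \kappa_A$. Next I would apply the second clause of the Axiom of Extension to the nondecreasing functions $f = \kappa_B$ and $g = \kappa_A$: from $f(n) \Elt g(n)$ we conclude $\widehat{\kappa_B}(\nu) < \widehat{\kappa_A}(\nu)$ for all $\nu \in \Nn\setminus\N$. In particular, taking $\nu = \omega$ (which lies in $\Nn\setminus\N$), we obtain $\widehat{\kappa_B}(\omega) < \widehat{\kappa_A}(\omega)$. Finally, by Definition~\ref{def:magnum}, $m(B) = \widehat{\kappa_B}(\omega)$ and $m(A) = \widehat{\kappa_A}(\omega)$, so $m(B) < m(A)$, as required.

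There is essentially no obstacle here: the substantive work has already been done in Theorem~\ref{th:EP1} (the combinatorial fact that a proper subset has an eventually strictly smaller counting sequence) and in the Axiom of Extension (the transfer of strict eventual inequality to the surnatural argument $\omega$). The only point requiring a word of care is that the Axiom of Extension is stated for nondecreasing functions, and one must note that counting sequences are indeed nondecreasing (as observed on page~\pageref{pg:chikap}); the hypothesis of the Axiom is therefore met. One might also remark that this theorem is precisely the promised strengthening of Theorem~\ref{th:EP1} and that it establishes the Euclidean Principle among the desiderata of the Introduction.
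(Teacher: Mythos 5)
Your proof is correct and follows exactly the same route as the paper: invoke Theorem~\ref{th:EP1} to get $\kappa_B \Elt \kappa_A$, apply the Axiom of Extension to obtain $\widehat{\kappa_B}(\omega) < \widehat{\kappa_A}(\omega)$, and conclude via Definition~\ref{def:magnum}. Your added remark that counting sequences are nondecreasing (so the Axiom's hypothesis is met) is a sensible point of care that the paper leaves implicit.
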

\begin{proof}
In Theorem~\ref{th:EP1} we proved that, if the counting sequences
of $A$ and $B$ are $\kappa(A)$ and $\kappa(B)$ and $B\subset A$,
then $\kappa(B) \Elt \kappa(A)$.  By the Axiom of Extension, this implies 
$\hat{\kappa_B}(\nu) < \hat{\kappa_A}(\nu)$ for all
$\nu\in\Nn\setminus\N$ and, in particular,
$\hat{\kappa_B}(\omega) < \hat{\kappa_A}(\omega)$. Therefore, $m(B) < m(A)$.
\end{proof}

\begin{theorem} 
\label{th:Theorem-M2}
Let $A$ and $B$ be subsets of $\mathbb{N}$, with magnums
$m(A)$ and $m(B)$. Then
$$
m(A\cup B) = m(A) + m(B) - m(A\cap B).
$$
If  $A$ and $B$ are disjoint, then $m(A\uplus B) = m(A) + m(B)$.
\end{theorem}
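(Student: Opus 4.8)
The plan is to derive the magnum identity directly from the corresponding counting-sequence identity already established in Theorem~\ref{th:FinAddKappa}, pushing it through the extension machinery via the Axiom of Extension. First I would recall that Theorem~\ref{th:FinAddKappa} gives, for all $n\in\N$, the pointwise identity $\kappa_{A\cup B}(n) = \kappa_A(n) + \kappa_B(n) - \kappa_{A\cap B}(n)$. The right-hand side is a combination of nondecreasing counting functions, and the Axiom of Extension guarantees that extensions exist which preserve sums; I would therefore apply the extension operator to both sides and invoke the clause $\widehat{(f+g)} = \hat f + \hat g$ to obtain $\widehat{\kappa_{A\cup B}} = \widehat{\kappa_A} + \widehat{\kappa_B} - \widehat{\kappa_{A\cap B}}$ as functions on $\Nn$.

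Next I would evaluate this extended identity at the single argument $\nu = \omega$. By Definition~\ref{def:magnum}, $m(X) = \widehat{\kappa_X}(\omega)$ for any $X\subseteq\N$, so evaluating gives immediately
\begin{equation}
m(A\cup B) = \widehat{\kappa_{A\cup B}}(\omega) = \widehat{\kappa_A}(\omega) + \widehat{\kappa_B}(\omega) - \widehat{\kappa_{A\cap B}}(\omega) = m(A) + m(B) - m(A\cap B) \,.
\nonumber
\end{equation}
For the disjoint case, $A\cap B = \emptyset$ forces $\kappa_{A\cap B}$ to be the identically zero sequence, whose extension is identically zero (this is the constant-sequence extension $\widehat{\mathrm{c}_0} \equiv 0$ mentioned in \S\ref{sec:extension}), so $m(A\cap B) = 0$ and the formula collapses to $m(A\uplus B) = m(A) + m(B)$. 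Alternatively one can cite Corollary~\ref{cor:k1} directly: for disjoint sets $\kappa_{A\uplus B} = \kappa_A + \kappa_B$, and then extend and evaluate at $\omega$.

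The subtle point worth stating explicitly is the well-definedness of the whole procedure: the Axiom of Extension asserts only that \emph{some} extensions preserving sums exist, not that extensions are unique, and indeed \S\ref{sec:extension} warns that extensions can be non-unique. So the argument needs the (implicit) convention that $m$ is defined using a fixed coherent choice of extensions --- the same family of extensions used in Definition~\ref{def:magnum} --- under which the sum-preservation clause applies simultaneously to $\kappa_A$, $\kappa_B$, $\kappa_{A\cap B}$ and $\kappa_{A\cup B}$. Granting that (which is the intended reading of the Axiom), there is really no obstacle here; the only thing to be careful about is not to conflate the componentwise sum of sequences defined in \S\ref{sec:theorems} with ordinary function addition, but these agree by construction, so Theorem~\ref{th:FinAddKappa} feeds directly into the sum-preservation clause. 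The proof is therefore short: state the pointwise identity, extend, evaluate at $\omega$, specialize to the disjoint case.
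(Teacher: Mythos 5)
Your proposal is correct and follows essentially the same route as the paper, whose proof simply states that the result follows directly from Theorem~\ref{th:FinAddKappa} and the definition of set magnums; you have merely spelled out the intermediate steps (extend via the Axiom of Extension, evaluate at $\omega$, specialize to the disjoint case). Your remark about using a fixed coherent choice of extensions is a reasonable clarification of what the paper leaves implicit, but it does not change the argument.
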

\begin{proof}
This follows directly from Theorem~\ref{th:FinAddKappa} and the definition
of set magnums.
\end{proof}

\begin{theorem} 
\label{th:Theorem-M3}
(a) Let $A\comp = \mathbb{N}\setminus A$. Then $m(A\comp) = \omega - m(A)$. \\
(b) If $a\in A$ then $m(A\setminus\{a\}) = m(A) - 1$. \\
(c) If $b\notin A$ then $m(A\uplus\{b\}) = m(A) + 1$. \\
(d) If $a\in A$ and $b\notin A$ then $m((A\setminus\{a\})\uplus\{b\}) = m(A)$. \\
(e) For $n$ distinct elements $\{a_1, a_2, \dots , a_n\}$ of $A$ we have
$m(A\setminus \biguplus_{k=1}^n \{a_k\}) = m(A) - n$. \\
(f) For $n$ distinct elements $\{b_1, b_2, \dots , b_n\}$ not in $A$ we have
$m(A\uplus \biguplus_{k=1}^n \{b_k\}) = m(A) + n$. \\
(g) If $B\subset A$ then $m(A\setminus B) = m(A) - m(B)$. \\
(h) For any $B$, $m(A\setminus B) = m(A) - m(A\cap B)$.
\end{theorem}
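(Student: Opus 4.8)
The plan is to derive each part of Theorem~\ref{th:Theorem-M3} directly from the corresponding part of Theorem~\ref{th:pm1} (the identities for counting sequences) together with Definition~\ref{def:magnum} and the Axiom of Extension. The guiding principle is that every set identity already established at the level of $\kappa$ passes to the level of $m$ by evaluating the extended counting function at $\omega$: if $\kappa_X = \kappa_Y + \kappa_Z$ as sequences on $\N$, then by the sum-preserving clause of the Axiom of Extension $\widehat{\kappa_X} = \widehat{\kappa_Y} + \widehat{\kappa_Z}$, and evaluating at $\omega$ gives $m(X) = m(Y) + m(Z)$; similarly for differences, since $\kappa_Y = \kappa_X - \kappa_Z$ rearranges to a sum. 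Constant sequences extend to the same constant (so $\widehat{\kappa_{\{a\}}}(\omega) = 1$, and $\widehat{\kappa_F}(\omega)=|F|$ for any finite $F$), and $\widehat{\kappa_\N}(\omega) = \omega$ by \eqref{eq:m-omega}.

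Concretely, I would run through the list: (a) apply $\widehat{\ \cdot\ }$ to $\kappa(A\comp) = \kappa(\N) - \kappa(A)$ from Theorem~\ref{th:pm1}(a) and evaluate at $\omega$, using $m(\N)=\omega$; (b), (c), (d) follow the same way from parts (b), (c), (d) of Theorem~\ref{th:pm1}, since $\kappa(\{a\})$ and $\kappa(\{b\})$ are eventually constant with value $1$; (e), (f) follow from parts (e), (f), the finite set $\{a_1,\dots,a_n\}$ having extended counting function identically $n$ beyond $\N$; (g), (h) follow from parts (g), (h), noting that $\kappa(B)$ and $\kappa(A\cap B)$ extend to $m(B)$ and $m(A\cap B)$ at $\omega$. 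In each case the argument is a one-line invocation: the set identity holds, hence the $\kappa$-identity holds (Theorem~\ref{th:pm1}), hence by the Axiom of Extension the same linear relation holds for the extensions, hence it holds at $\nu=\omega$.

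There is essentially no obstacle here; the theorem is a direct corollary of Theorem~\ref{th:pm1} and Theorem~\ref{th:Theorem-M2}, and indeed parts (b)--(f) can alternatively be read off from Theorem~\ref{th:Theorem-M2} by writing the relevant set as a disjoint union with a finite set. The only point requiring a moment's care is that subtraction on the right-hand sides is genuine subtraction in the field $\No$ (or in $\Nn$ when the difference is nonnegative, which it always is under the stated hypotheses, e.g. $m(B)\le m(A)$ when $B\subseteq A$ by Theorem~\ref{th:Theorem-M1}), so that identities like $m(A\setminus\{a\}) = m(A) - 1$ are legitimate within the surnatural numbers. I would state this once and then present the proof as the list of short derivations indicated above, mirroring the structure of the proof of Theorem~\ref{th:pm1}.
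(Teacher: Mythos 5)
Your proposal is correct and matches the paper's own argument, which simply states that all parts follow from the counting-sequence identities of Theorem~\ref{th:lotsofstuff} together with the definition of set magnums; you merely spell out the intermediate step (extend the $\kappa$-identities via the Axiom of Extension and evaluate at $\omega$) that the paper leaves implicit. No gap.
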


\begin{proof}
These results follow immediately from Theorem~\ref{th:lotsofstuff}
and the definition of set magnums.
\end{proof}




\subsubsection*{The Density Theorem}
\label{sec:DenTheorem}

We now prove the Density Theorem, showing the connection between the
density and magnum of any set $A\subset\N$ having density $\rho(A)$.
Using the normal form [Appendix~1, (\ref{eq:normalform})] we can express
any magnum as $m(A) = c\,\omega + o(\omega)$ where $c\in[0,1]$ and
$o(\omega)/\omega$ is infinitesimal. 
%
%

\begin{lemma}\label{lemma:ratio}
  If $f$ and $g$ are non-decreasing functions from $\N$ to $\N_0$ and
  $\lim_{n\to\infty} {f(n)}/{g(n)}=L$ then for $\nu\in \Nn
  \setminus \N$, we have ${\hat
    f(\nu)}/{\hat g(\nu)}\approx L$  or equivalently $\hat f(\nu) =
  \hat g(\nu)L + o(\hat g(\nu))$. 
\end{lemma}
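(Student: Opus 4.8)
The plan is to reduce the statement about surnatural arguments to the given real-variable limit by sandwiching $f$ between two rational multiples of $g$ and invoking the order-preserving properties guaranteed by the Axiom of Extension. First I would fix $\varepsilon>0$ of the form $\varepsilon = 1/q$ with $q\in\N$, so that all the scalar multiples occurring below are rational and hence make sense as comparisons in $\Nn$. From $\lim_{n\to\infty} f(n)/g(n) = L$ there is an $N$ with $(L-\varepsilon)g(n) \le f(n) \le (L+\varepsilon)g(n)$ for all $n \ge N$; since $g$ is non-decreasing to $\infty$ (the nontrivial case), I would like to promote these to genuine eventual inequalities between the non-decreasing integer sequences $n\mapsto \lceil(L-\varepsilon)g(n)\rceil$, $\kappa$-type comparison functions, and so on, so that the Axiom of Extension applies verbatim.

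The key technical point is that the Axiom of Extension as stated compares non-decreasing functions $\N\to\N_0$ under $\Eeq$ and $\Elt$, and preserves sums and products; it does not directly speak about multiplying a function by a rational constant. So the second step is to observe that for a rational $r = p/q \ge 0$ the function $n \mapsto r\,g(n)$ need not be integer-valued, but $n\mapsto \lfloor r\,g(n)\rfloor$ and $n\mapsto \lceil r\,g(n)\rceil$ are, and differ from $r\,\hat g(\nu)$ by an amount bounded by $1$, hence by something infinitesimal relative to $\hat g(\nu)$. Using the product-preserving clause with the constant function $p$ and with $g$, together with the (integer) division-with-remainder identity $p\,g(n) = q\lfloor p g(n)/q\rfloor + s(n)$ where $0\le s(n) < q$, I would extract that $\widehat{\lfloor r g\rfloor}(\nu) = r\,\hat g(\nu) + o(\hat g(\nu))$ for $\nu\in\Nn\setminus\N$, and similarly with the ceiling. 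This is the main obstacle: making precise, purely from the Axiom of Extension and the arithmetic of $\Nn$, that flooring a rational multiple of a divergent non-decreasing sequence only perturbs the extended value by an infinitesimal fraction.

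With that in hand, the eventual inequalities $\lfloor(L-\varepsilon)g(n)\rfloor \le f(n) \le \lceil(L+\varepsilon)g(n)\rceil$ (valid for $n\ge N'$ for a suitable $N'$, after absorbing the floor/ceiling discrepancies into a slightly larger tolerance, or by passing to $\varepsilon/2$) are comparisons of non-decreasing integer-valued functions, so the Axiom of Extension yields, for every $\nu\in\Nn\setminus\N$,
\[
(L-\varepsilon)\hat g(\nu) + o(\hat g(\nu)) \ \le\ \hat f(\nu)\ \le\ (L+\varepsilon)\hat g(\nu) + o(\hat g(\nu)).
\]
Dividing through by $\hat g(\nu)$ (which is a positive surnatural, as $g$ is eventually positive and its extension inherits this by the order clause of the Axiom) gives $|\hat f(\nu)/\hat g(\nu) - L| \le \varepsilon + (\text{infinitesimal})$. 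Finally, since $\varepsilon = 1/q$ can be taken with $q$ arbitrarily large and the real part of $\hat f(\nu)/\hat g(\nu) - L$ is a fixed surreal independent of $\varepsilon$, its standard part must be $0$; that is exactly the assertion $\hat f(\nu)/\hat g(\nu)\approx L$, equivalently $\hat f(\nu) = L\,\hat g(\nu) + o(\hat g(\nu))$. The degenerate cases — $g$ eventually constant (so $L$ forces $f$ eventually constant too, handled by Theorem~\ref{th:EventualEquality}) and $L=0$ — I would dispatch separately at the start, the former by the definition of extension of eventually-constant sequences and the latter by the same sandwich with the lower bound replaced by $0$.
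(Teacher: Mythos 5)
Your proposal is correct and follows essentially the same route as the paper's proof: use the limit definition to obtain eventual inequalities $(L-\epsilon)g(n)\le f(n)\le(L+\epsilon)g(n)$, transfer them to $\Nn\setminus\N$ via the Axiom of Extension, divide by $\hat g(\nu)$, and let $\epsilon\to 0$ to conclude $\hat f(\nu)/\hat g(\nu)\approx L$. The only difference is that you add the floor/ceiling and rational-$\epsilon$ bookkeeping to keep the comparison functions integer-valued (plus the degenerate bounded-$g$ and $L=0$ cases), a technical point the paper's proof simply elides by applying the axiom directly to $(L\pm\epsilon)g$; this extra care is harmless and, if anything, makes the argument more faithful to the axiom as stated.
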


\begin{proof}
  The convergence of the sequence implies that, given any real number $\epsilon>0$,
  there exists $n_0$ such that $f(n)<(L+\epsilon)g(n)$ for all $n\ge
  n_0$ so by the Axiom of Extension, $\hat f(\nu)< (L+\epsilon)\hat
  g(\nu)$ and ${\hat f(\nu)}/{\hat g(\nu)} < L+\epsilon$ for all
  $\nu\in \Nn\setminus \N$.   Similarly ${\hat f(\nu)}/{\hat g(\nu)} > L-\epsilon$ for all
  $\nu\in \Nn\setminus \N$.  It follows ${\hat f(\nu)}/{\hat g(\nu)} \approx L$ for all
  $\nu\in \Nn\setminus \N$.
\end{proof}

\begin{theorem}
Let $A\subset \N$ have density $\rho(A)\in[0,1]$. Then $m(A) = \rho(A)\omega + o(\omega)$.
\label{th:DensityTheorem}
\end{theorem}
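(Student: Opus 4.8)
The plan is to derive the Density Theorem as an immediate consequence of Lemma~\ref{lemma:ratio}, taking $f = \kappa_A$ and $g = \mathrm{id}_\N$. First I would recall that the density $\rho(A)$, when it exists, is by definition $\lim_{n\to\infty} \kappa_A(n)/n$, and that both $\kappa_A:\N\to\N_0$ and the identity function are non-decreasing functions from $\N$ to $\N_0$, so the hypotheses of Lemma~\ref{lemma:ratio} are met with $L = \rho(A)$. Applying the lemma with $\nu = \omega$ (which lies in $\Nn\setminus\N$) gives $\hat\kappa_A(\omega) = \widehat{\mathrm{id}}(\omega)\,\rho(A) + o(\widehat{\mathrm{id}}(\omega))$. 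Since the identity function extends canonically to $\mathrm{id}_{\Nn}$ with $\mathrm{id}_{\Nn}(\omega) = \omega$, this reads $\hat\kappa_A(\omega) = \rho(A)\,\omega + o(\omega)$.

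Next I would invoke Definition~\ref{def:magnum}, $m(A) = \hat\kappa_A(\omega)$, to conclude $m(A) = \rho(A)\,\omega + o(\omega)$, which is exactly the claimed identity; equivalently, $m(A) \isobar \rho(A)\,\omega$ in the notation of the paper. The two boundary cases deserve a word: if $\rho(A) = 0$ the statement asserts $m(A) = o(\omega)$, i.e.\ $m(A)/\omega$ is infinitesimal, which the lemma delivers directly; if $\rho(A) = 1$ then $\kappa_A$ agrees with the identity eventually only up to a sublinear discrepancy, and the lemma still gives $m(A) = \omega + o(\omega)$ without forcing $m(A) = \omega$ exactly (as it should, since e.g.\ $A = \N\setminus\{1\}$ has density $1$ but magnum $\omega - 1$).

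The only real content is already packaged in Lemma~\ref{lemma:ratio}, so there is no serious obstacle; the proof is essentially a substitution. The one point requiring a little care is that the conclusion of the lemma is stated for \emph{all} $\nu\in\Nn\setminus\N$, and one must check that $\omega$ is such an element and that the canonical extension of the identity used implicitly in $\widehat{(f/g)}$ is the intended one — but both are immediate from the remarks on extensions of polynomial functions in \S\ref{sec:extension} and from $\omega\in\Nn\setminus\N$. Thus the proof reads: apply Lemma~\ref{lemma:ratio} with $f=\kappa_A$, $g=\mathrm{id}_\N$, $L=\rho(A)$, evaluate at $\omega$, and use $m(A)=\hat\kappa_A(\omega)$ together with $\mathrm{id}_{\Nn}(\omega)=\omega$.
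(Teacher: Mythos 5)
Your proof is correct and follows essentially the same route as the paper's: apply Lemma~\ref{lemma:ratio} with $f=\kappa_A$ and $g=\kappa_\N=\mathrm{id}_\N$ at $\nu=\omega$, then invoke $m(A)=\hat{\kappa_A}(\omega)$. The only quibble is notational: writing ``$m(A)\isobar\rho(A)\omega$'' misuses the paper's symbol for isobaric equivalence of sets, where the intended relation is the asymptotic one written $\approx$ (equality up to $o(\omega)$).
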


\begin{proof}
    We apply the Lemma to $f(n)=\kappa_A(n)$ and $g(n)=\kappa_{\N}(n)=n$ at
	$\nu=\omega$.  Since $\lim_n
  {f(n)}/{g(n)}=\rho(A)$ we find that  ${m(A)}/{\omega}=
	{\hat{\kappa_A}(\omega)}/{\hat{\kappa_\N}(\omega)} \approx
        \rho(A)$ or $m(A)=\rho(A)\omega + o(\omega)$.
\end{proof}

\subsection[Consistency of the Counting Function and Magnum Form]
           {Consistency of the Counting Function and Magnum Form}
\label{sec:Equiv}

We have developed magnums genetically in terms of magnum forms (\S\ref{sec:gendef}) and
also defined them in terms of the counting function (Definition~\ref{def:magnum}):
\begin{itemize}
\item \textbf{Genetic:} $m(A)$ is constructed from the form $\{m(B):B\subset A\ |\ m(C):C\supset A\}$.
\item \textbf{Functional:} $\mu(A)$ is defined from the extended counting function:
                   $\mu(A) = \hat\kappa(\omega)$.
\end{itemize}
We must now show that these two definitions are consistent: $m(A) = \mu(A)$. 

For finite sets it is easily seen that the two approaches agree.
Let  $A = I_k = \{1,2,\dots,k\}$.
Then $m(A) = \{ 0, 1, 2, \dots k-1 \ |\ \} = k$ and $\kappa_A(k) = k$,
which is extended with constant value $k$ so that $\mu(A) =\hat{\kappa_A}(\omega) = k$. 
For cofinite sets the equivalence can also be shown. Let $A = \mathbb{N}\setminus I_k$.
$$
\kappa_A(n) = \lfloor\alpha_A^{-1}(n)\rfloor = \alpha_A^{-1}(n) = 
\begin{cases} 0, & n\le k \\
            n-k, & n > k
\end{cases}
$$
which extends to $\mu(A) = \hat{\kappa_A}(\omega) = \omega-k$.
The genetic magnum form takes the same value:
$$
m(A) = \{ 0, 1, 2, \dots \ |\ \omega, \omega-1,\dots,\omega -k+1 \} = \omega-k \,.
$$
So, for all finite and cofinite sets, the two methods yield equivalent results.
We also know that for $A=\mathbb{N}$, both methods agree: $m(A) = \mu(A) = \omega$
and, for the odd and even numbers, $m(A) = \mu(A) = \frac{\omega}{2}$. 

More generally, suppose the magnum of a set $A$ is given by
$\mu(A) = \hat{\kappa_A}(\omega)$.  By definition, $\mu(A) \in \Nn$. Therefore,
$$
\mu(A) = \{ \mu(A)-1\ |\ \mu(A)+1 \} \,.
$$
For any nonempty set $A$, the largest proper subset $B$ of $A$ has magnum $\mu(A)-1$
while the smallest proper superset $C$ has magnum $\mu(A)+1$. For example, let
$a\in A$ and $d\not\in A$ and let $B = A \setminus\{a\}$ and $C = A \uplus \{d\}$.
Then we can write
$$
\mu(A) = \{ \mu(B)\ |\ \mu(C) \} = \{ \mu(A)-1\ |\ \mu(A)+1 \} \,.
$$
Clearly, including other subsets of $A$ on the left or supersets on the right
will not alter the value of the magnum of $A$.

We conclude that the two representations of the magnum of $A$,
as $\mu(A) = \hat{\kappa_A}(\omega)$ and as
$$
m(A) = \{ m(B) : B\subset A \text{ and } B\in\mathscr{M} \ |\ 
          m(C) : C\supset A \text{ and } C\in\mathscr{M}  \}
$$
are compatible with each other.




\subsection{The General Isobary Theorem}
\label{sec:GIT}


We first prove that sets that are isobaric under a \emph{simple partition}
have equal magnums.  The proof of this will lead us to a more general result.
\begin{theorem}
Any two sets $A$ and $B$ that are isobaric under a
simple partition have equal magnums, $m(A) = m(B)$.
\label{th:L-iso}
\end{theorem}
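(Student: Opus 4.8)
The plan is to reduce the statement to the Axiom of Extension by evaluating the counting functions of $A$ and $B$ at a common \emph{window boundary} --- the crucial point being that $\omega$ is itself such a boundary. Suppose $A\isobar B$ under a simple partition $\mathscr{W}=\{W_k:k\in\N\}$ with window length $L$, so that $|A\cap W_k|=|B\cap W_k|=:w_k$ for every $k$. First I would record that the two counting functions agree at every multiple of $L$: since $I_{kL}=W_1\uplus\cdots\uplus W_k$, finite additivity of cardinality (Corollary~\ref{cor:k2}) gives
\begin{equation}
\kappa_A(kL)=\sum_{j=1}^{k}|A\cap W_j|=\sum_{j=1}^{k}w_j=\sum_{j=1}^{k}|B\cap W_j|=\kappa_B(kL)
\nonumber
\end{equation}
for all $k\in\N$. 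This on its own does not license an appeal to the Axiom of Extension, because $\kappa_A$ and $\kappa_B$ may differ strictly \emph{inside} the windows, so they need not be eventually equal; and a merely bounded difference would not suffice either, since magnums separate, e.g., $\omega$ from $\omega-1$.

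The key step is to sample at window boundaries. Let $h:\N\to\N$ be $h(m)=Lm$, and put $g_A=\kappa_A\circ h$ and $g_B=\kappa_B\circ h$; these are nondecreasing functions $\N\to\N_0$, and by the display above $g_A(m)=g_B(m)$ for every $m$, so $g_A=g_B$ on all of $\N$. The function $h$ extends canonically to the linear polynomial $\hat h(\nu)=L\nu$ on $\Nn$, and since $\omega$ is divisible by every positive integer we have $\omega/L\in\Nn$ with $\hat h(\omega/L)=\omega$. Invoking the composition clause of the Axiom of Extension, $\hat{g_A}=\hat{\kappa_A}\circ\hat h$, whence
\begin{equation}
\hat{g_A}(\omega/L)=\hat{\kappa_A}\bigl(\hat h(\omega/L)\bigr)=\hat{\kappa_A}(\omega)=m(A),
\nonumber
\end{equation}
and identically $\hat{g_B}(\omega/L)=m(B)$. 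Finally, from $g_A=g_B$ on $\N$ we have in particular $g_A\Eeq g_B$, so the Axiom of Extension yields $\hat{g_A}(\nu)=\hat{g_B}(\nu)$ for all $\nu\in\Nn\setminus\N$; taking $\nu=\omega/L$ gives $m(A)=m(B)$.

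The step I expect to be the real obstacle is the bookkeeping around the substitution $\nu\mapsto L\nu$: one must be careful that $\omega/L$ is genuinely a surnatural number --- so that $\omega$ really is the $(\omega/L)$-th window endpoint in the extended picture --- and that the composition clause of the Axiom of Extension legitimately applies with $g_A$ presented in the form $\kappa_A\circ h$ (i.e.\ that the extension operation is used consistently). The remainder is routine cardinality arithmetic. This argument is also what points to the promised generalization: for an arbitrary fenestration into finite windows of possibly unequal sizes one replaces $h$ by $m\mapsto\sum_{j=1}^{m}|W_j|$, and the remaining work becomes showing that this function still extends so that its value at the appropriate surnatural argument is exactly $\omega$, after which the boundary-sampling argument carries over verbatim.
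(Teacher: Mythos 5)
Your proposal is correct and follows essentially the same route as the paper: the paper likewise samples the counting functions at the window endpoints (via $\Delta(n)=\kappa_A(Ln)-\kappa_B(Ln)$, which vanishes by isobary), extends to $\Nn$, and substitutes $\nu=\omega/L\in\Nn$ to conclude $\hat{\kappa_A}(\omega)=\hat{\kappa_B}(\omega)$. Your version merely makes explicit, through $g_A=\kappa_A\circ h$ with $\hat h(\nu)=L\nu$ and the composition clause of the Axiom of Extension, the substitution step that the paper carries out more informally.
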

\begin{proof}
Let $A$ and $B$, with counting functions $\kappa_A(n)$ and $\kappa_B(n)$,
be isobaric under a simple partition with constant window-length $L$.
We define $\Delta(n) = \kappa_A(Ln)-\kappa_B(Ln)$.  The hypothesis
implies $\Delta(n)=0$ for all $n$.
We now assume that $\kappa_A(n)$ and $\kappa_B(n)$ are extended to the domain
$\Nn$ as $\hat{\kappa_A}(\nu)$ and $\hat{\kappa_B}(\nu)$.

Since, for functions $f$ and $g$, we have
$\widehat{(f+g)}(\nu) = \widehat{f}(\nu) + \widehat{g}(\nu)$, we can now write 
$\hat \Delta(\nu) = \hat{\kappa_A}(L\nu) - \hat{\kappa_B}(L\nu)$.
Since $\Delta(n)$ vanishes for all $n$, its extension vanishes for all $\nu\in\Nn$.
This implies $\hat{\kappa_A}(L\nu) = \hat{\kappa_B}(L\nu)$.
In particular, setting $\nu = \omega/L$ we have
$\hat{\kappa_A}(\omega) = \hat{\kappa_B}(\omega)$.
Therefore, $m(A) = m(B)$.
\end{proof}
While this theorem does not yield direct information about the actual values
of $m(A)$ and $m(B)$, it enables us to compare the sizes of sets. For
example, the magnums of $2\mathbb{N}$ and $2\mathbb{N}-1$ are equal,
since these sets are isobaric. Consequently, since 
$2\mathbb{N}\uplus(2\mathbb{N}-1) = \mathbb{N}$, it follows that
$m(2\mathbb{N}) = m(2\mathbb{N}-1) = \omega/2$, as already shown above.

\subsubsection*{A More Powerful Result: the General Isobary Theorem}

Examining the proof of Theorem~\ref{th:L-iso}, we can see that a much more
sweeping result holds.  We require a decomposition of sets that is more general
than simple partition with components of equal length.
We will define a \emph{fenestration} of $\N$ by specifying
a \emph{partition function} $\Lambda:\N\to\N$.
Let $(\Lambda(n))_n$ be an arbitrary, strictly increasing sequence of natural numbers.
We write $\Lambda(0) = 0$ and define
$$
W_n = \{\Lambda(n-1)+1, \dots , \Lambda(n) \} = (\Lambda(n-1),\Lambda(n)]_\mathbb{N} \,.
$$
This determines a partition of the natural numbers, $\mathbb{N} = \biguplus_n W_n$.
The \emph{length} of $W_n$ is $L_n = \fcard{W_n} = \Lambda(n)-\Lambda(n-1)$.
The weight sequence of a set $A$ is $w_A = (\fcard{A \cap W_n})_n$.
It is convenient to use the symbol $\Lambda$ for both the partition function
and its range $\Lambda(\N)$, which comprises the \emph{endpoints} of the windows.

Let $\widehat\Lambda(\nu)$ be the extension of $\Lambda:\mathbb{N}\to\mathbb{N}$
to $\widehat\Lambda:\Nn\to\Nn$.  Observe that, for a simple partition, $\Lambda(n)=nL$
extends to $\widehat\Lambda(\nu)=\nu L$, so that $\omega = \widehat\Lambda(\nu)$ where
$\nu = \omega/L \in\Nn$ and $\Lambda\in\BOM$.  
To prove the general isobary theorem, we must ensure that $\omega$ is an
image point of the extension; that is, $\Lambda=\{\Lambda(n):n\in\N\}$ is an omega set
so that $\omega\in\hat\Lambda(\Nn)$.

To illustrate the need for $\Lambda$ to be an omega set,
we consider the sets $2\mathbb{N}$ and $2\mathbb{N}+1$.
Under the partition determined by $\Lambda(n) = 2n-1$, these sets are isobaric.
However, we expect that 
$m(2\mathbb{N}) = \frac{\omega}{2}$ while $m(2\mathbb{N}+1) = \frac{\omega}{2}-1$.
Indeed, the extension is $\hat{\Lambda}(\nu) = 2\nu-1$ so
$\hat{\Lambda}(\nu) = \omega$ implies $\nu = \frac{\omega}{2}+\half \not\in\Nn$,
so that $\omega\not\in\hat{\Lambda}(\Nn)$: the partition is not a fenestration.

\begin{definition}
\label{pg:deffen}
A \emph{fenestration} of $\N$ is a partition $\mathscr{W}$ of $\mathbb{N}$,
whose partition function $(\Lambda(n))_n$ is strictly increasing,
and such that its endpoint set $\Lambda$ is an omega set;
that is, $\Lambda\in\BOM$ and $\omega\in\hat{\Lambda}$.
\label{def:genfen}
\end{definition}
Clearly, simple partitions (see Def.~\ref{def:regfen} in \S\ref{sec:isobar}) are included in
the family of fenestrations since, for all $L\in\mathbb{N}$ we have $\omega/L\in\Nn$.
Isobary is now defined with respect to a fenestration:
\begin{definition}
Two sets $A$ and $B$ in $\mathscr{P}(\N)$ are \emph{isobaric} with respect to
a fenestration $\mathscr{W} = \{W_k:k\in\N\}$ if
they have equal weight sequences, that is, if
$w_A(n) = w_B(n)$ for all $n\in\mathbb{N}$.
\end{definition}
Of course, equal weight sequences imply that $\kappa_A(\Lambda(n)) = \kappa_B(\Lambda(n))$.


We now state and prove the General Isobary Theorem:
\begin{theorem}
Let $\mathscr{W}$ be a fenestration of $\N$ with endpoint set $\Lambda(\N)$.
Let $A$ and $B$ be two sets that are isobaric under $\mathscr{W}$.
Then $A$ and $B$ have equal magnums, $m(A) = m(B)$.
\label{th:iso}
\end{theorem}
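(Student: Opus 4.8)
The plan is to mimic the proof of Theorem~\ref{th:L-iso}, replacing the constant window length $L$ with the variable partition function $\Lambda$. Let $A$ and $B$ be isobaric under the fenestration $\mathscr{W}$ with endpoint set $\Lambda(\N)$, and let $\kappa_A$, $\kappa_B$ be their counting functions. The key observation is that equal weight sequences force agreement of the counting functions precisely at the window endpoints: $\kappa_A(\Lambda(n)) = \kappa_B(\Lambda(n))$ for all $n\in\N$. First I would define the composed functions $f(n) = \kappa_A(\Lambda(n))$ and $g(n) = \kappa_B(\Lambda(n))$, which are non-decreasing functions $\N\to\N_0$, and note that $f = g$ on all of $\N$, hence certainly $f\Eeq g$.

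Next I would invoke the Axiom of Extension. Since extensions preserve compositions, $\widehat{(\kappa_A\circ\Lambda)} = \hat{\kappa_A}\circ\hat{\Lambda}$ and likewise for $B$; and since $f$ and $g$ agree on all of $\N$ (so $f\Eeq g$), their extensions agree on $\Nn\setminus\N$, giving $\hat{\kappa_A}(\hat{\Lambda}(\nu)) = \hat{\kappa_B}(\hat{\Lambda}(\nu))$ for all $\nu\in\Nn\setminus\N$. (In fact one gets agreement for all $\nu\in\Nn$, using the $\N$-values as well, but the infinite case is all that is needed.) The crucial final step is to specialise $\nu$ so that $\hat{\Lambda}(\nu) = \omega$. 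This is exactly where the fenestration hypothesis earns its keep: by Definition~\ref{def:genfen}, $\Lambda$ is an omega set, so $\omega\in\hat{\Lambda}(\Nn)$, i.e.\ there exists $\nu^{*}\in\Nn$ with $\hat{\Lambda}(\nu^{*}) = \omega$; moreover $\nu^{*}\notin\N$ since $\Lambda$ maps $\N$ into $\N$ and $\omega\notin\N$. Substituting $\nu = \nu^{*}$ yields $\hat{\kappa_A}(\omega) = \hat{\kappa_B}(\omega)$, that is, $m(A) = m(B)$ by Definition~\ref{def:magnum}.

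The main obstacle, such as it is, is making the composition identity $\widehat{\kappa_A\circ\Lambda} = \hat{\kappa_A}\circ\hat{\Lambda}$ fully rigorous together with the transfer of eventual equality through that composition --- one must be slightly careful that the Axiom of Extension is being applied to the right pair of functions (the composites $f,g$, which are genuinely equal on $\N$, rather than to $\kappa_A,\kappa_B$ themselves, which are \emph{not} eventually equal in general). A secondary subtlety is confirming $\nu^{*}\notin\N$, so that the ``$\nu\in\Nn\setminus\N$'' clause of the Axiom of Extension actually applies; this is immediate because $\hat{\Lambda}$ restricted to $\N$ is just $\Lambda$, whose range lies in $\N$, whereas $\omega\notin\N$. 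Everything else is a direct transcription of the argument already given for the simple-partition case in Theorem~\ref{th:L-iso}.
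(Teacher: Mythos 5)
Your proposal is correct and follows essentially the same route as the paper's own proof: agreement of the counting functions at the window endpoints, extension of the composite $\kappa\circ\Lambda$ via the Axiom of Extension, and evaluation at the preimage of $\omega$ supplied by the fenestration (omega-set) hypothesis. Your version is if anything slightly more careful than the paper's, in that you invoke the composition-preservation clause explicitly and verify that the preimage $\nu^{*}$ lies in $\Nn\setminus\N$, whereas the paper works with the difference $\Delta(n)=\kappa_A(\Lambda(n))-\kappa_B(\Lambda(n))$ and writes $\nu=\Lambdahatinv(\omega)$ directly.
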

\begin{proof}
Let two sets $A$ and $B$, with counting functions $\kappa_A(n)$ and $\kappa_B(n)$,
be isobaric under the fenestration determined by $\Lambda(n)$.
We define $\Delta(n) = \kappa_A(\Lambda(n))-\kappa_B(\Lambda(n))$.
The hypothesis    
implies $\Delta(n)=0$ for all $n$.
We assume that $\kappa_A(n)$ and $\kappa_B(n)$ are extended to the domain
$\Nn$ as $\hat{\kappa_A}(\nu)$ and $\hat{\kappa_B}(\nu)$.

Using the Axiom of Extension, we can write 
$\hat \Delta(\nu) = \hat{\kappa_A}(\Lambda(\nu)) - \hat{\kappa_B}(\Lambda(\nu))$.
Since $\Delta(n)$ vanishes for all $n$, its extension vanishes for all $\nu\in\Nn$.
This implies $\hat{\kappa_A}(\hat{\Lambda}(\nu)) = \hat{\kappa_B}(\hat{\Lambda}(\nu))$.
In particular, setting $\nu = \Lambdahatinv(\omega)\in\Nn$, we have
$\hat{\kappa_A}(\omega) = \hat{\kappa_B}(\omega)$ or $m(A) = m(B)$.
\end{proof}

To prove that general isobary is an equivalence relation,
we first show that the intersection of the endpoint sets
of two fenestrations of $\N$ is the endpoint set of a
fenestration of $\N$.
\begin{theorem}
Let $\mathscr{W}_1$ and $\mathscr{W}_2$ be two fenestrations of $\N$,
with respective endpoint sets $\Lambda_1$ and $\Lambda_2$.
Then the partition of $\N$ whose endpoints comprise the intersection
$\Lambda = \Lambda_1\cap\Lambda_2$ is a fenestration of $\mathbb{N}$.
\label{th:LamInt}
\end{theorem}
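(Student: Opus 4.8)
The plan is to verify the two defining conditions of a fenestration (Definition~\ref{def:genfen}) for the partition whose endpoint set is $\Lambda = \Lambda_1\cap\Lambda_2$: namely, that $\Lambda$ admits a strictly increasing partition function, and that $\Lambda$ is an omega set.

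First I would show that $\Lambda$ is an omega set. Since $\mathscr{W}_1$ and $\mathscr{W}_2$ are fenestrations, their endpoint sets satisfy $\omega\in\hat{\Lambda}_1$ and $\omega\in\hat{\Lambda}_2$. By Theorem~\ref{th:AhatBhat}(a) the extension commutes with intersection, $\widehat{\Lambda_1\cap\Lambda_2}=\hat{\Lambda}_1\cap\hat{\Lambda}_2$, and hence $\omega\in\hat{\Lambda}$. Alternatively, and perhaps more cleanly, I would invoke the Ultrafilter Theorem~\ref{th:UltraFilter}: $\Lambda_1$ and $\Lambda_2$ are omega sets, i.e.\ members of $\BOM$, and $\BOM$ is a filter, so it is closed under finite intersection; thus $\Lambda\in\BOM$ is again an omega set.

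Next I would argue that $\Lambda$ is infinite, so that it genuinely determines a partition of $\N$ into (countably many) finite windows. This is immediate from the previous step: a finite subset $F\subseteq\N$ has $\hat F=F\subseteq\N$, which cannot contain $\omega\notin\N$ --- equivalently, $\BOM$ contains no finite set. Given infiniteness, listing the elements of $\Lambda$ in increasing order and setting $\Lambda(0)=0$ produces a strictly increasing partition function whose range is exactly $\Lambda$, with associated windows $W_n=(\Lambda(n-1),\Lambda(n)]_{\N}$ forming a partition $\N=\biguplus_n W_n$. Both conditions of Definition~\ref{def:genfen} then hold, so the partition with endpoint set $\Lambda$ is a fenestration.

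The main obstacle --- really the only point that needs care --- is the a~priori possibility that $\Lambda_1\cap\Lambda_2$ is finite, since two infinite subsets of $\N$ can certainly meet in a finite set; in that case $\Lambda$ would not be the endpoint set of any fenestration. The resolution is precisely that endpoint sets of fenestrations are omega sets and $\BOM$ is a filter, so their intersection is again an (automatically infinite) omega set. Everything else is routine bookkeeping about increasing enumerations.
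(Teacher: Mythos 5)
Your argument is correct and follows essentially the same route as the paper's proof: you show $\omega\in\hat\Lambda$ via Theorem~\ref{th:AhatBhat}(a) (extension commutes with intersection) and rule out finiteness of $\Lambda$ by noting that a finite set equals its own extension and so cannot contain $\omega$, which is exactly the paper's argument. Your alternative appeal to the filter property of $\BOM$ from Theorem~\ref{th:UltraFilter}, and the explicit remark that the increasing enumeration of $\Lambda$ yields the partition function, are harmless additions but not a different proof.
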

\begin{proof}
We define $\Lambda = \Lambda_1\cap\Lambda_2
= \{\Lambda_1(n):n\in\mathbb{N}\}\cap\{\Lambda_2(n):n\in\mathbb{N}\}$.
Since $\omega\in\hat{\Lambda_1}(\Nn)$ and $\omega\in\hat{\Lambda_2}(\Nn)$
we have $\omega\in\hat{\Lambda_1}\cap\hat{\Lambda_2}$.
By Theorem~\ref{th:AhatBhat}(a),
$\hat{\Lambda_1}\cap\hat{\Lambda_2} = \hat{\Lambda_1\cap\Lambda_2} = \hat\Lambda$.
Therefore, $\omega\in\hat{\Lambda}(\Nn)$, so that $\Lambda$ is an omega set.
Any finite set $A$ has $\hat A = A$, so $\Lambda$ must be an infinite set.
Thus, $\Lambda = \Lambda_1\cap\Lambda_2$ is the endpoint set of a
fenestration of $\N$.
\end{proof}
We now proof that general isobary is an equivalence relation.
\begin{theorem}
Let $\mathscr{W}_1$ and $\mathscr{W}_2$ be two fenestrations, with respective
endpoint sets $\Lambda_1$ and $\Lambda_2$.
For $A$, $B$ and $C$ in $\mathscr{P}(\mathbb{N})$, assume that
$A$ and $B$ are isobaric under $\mathscr{W}_1$ and 
$B$ and $C$ are isobaric under $\mathscr{W}_2$.
Then $A$ and $C$ are isobaric under the fenestration $\mathscr{W}$
having endpoint set $\Lambda = \Lambda_1\cap\Lambda_2$.
It follows that isobary is an equivalence relation.
\label{th:isotrans}
\end{theorem}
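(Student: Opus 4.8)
The plan is to reduce the transitivity statement to the already-established General Isobary Theorem (Theorem~\ref{th:iso}) together with the intersection result just proved (Theorem~\ref{th:LamInt}). The reflexive and symmetric parts of the equivalence relation are immediate: any set is isobaric to itself under any fenestration (for instance the trivial one with $\Lambda(n)=n$), and the defining condition $w_A(n)=w_B(n)$ is manifestly symmetric in $A$ and $B$. So the substance is transitivity, and the statement has been phrased to hand us exactly the fenestration we need, namely the one with endpoint set $\Lambda = \Lambda_1\cap\Lambda_2$, which Theorem~\ref{th:LamInt} guarantees is indeed a fenestration of $\N$.

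First I would set up notation: write $\Lambda$ for the (strictly increasing) partition function enumerating the set $\Lambda_1\cap\Lambda_2$, with windows $W_n = (\Lambda(n-1),\Lambda(n)]_\mathbb{N}$. The key observation is that every endpoint of $\mathscr{W}$ is simultaneously an endpoint of $\mathscr{W}_1$ and of $\mathscr{W}_2$, so each window $W_n$ of $\mathscr{W}$ is a disjoint union of consecutive windows of $\mathscr{W}_1$ (and likewise a disjoint union of consecutive windows of $\mathscr{W}_2$). Since $A$ and $B$ have equal weight sequences under $\mathscr{W}_1$, they have equal counts on each $\mathscr{W}_1$-window, hence equal counts on each $W_n$ (a finite sum of equal terms); thus $|A\cap W_n| = |B\cap W_n|$ for all $n$, i.e.\ $A$ and $B$ are isobaric under $\mathscr{W}$. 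The same argument with $\mathscr{W}_2$ shows $B$ and $C$ are isobaric under $\mathscr{W}$. Transitivity of ordinary equality of weight sequences then gives that $A$ and $C$ are isobaric under $\mathscr{W}$, which is the asserted conclusion; and then Theorem~\ref{th:iso} yields $m(A)=m(C)$ as a corollary, confirming the relation is well-behaved.

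The main obstacle — though it is more a matter of care than of genuine difficulty — is the coarsening step: verifying that passing from $\mathscr{W}_1$ (or $\mathscr{W}_2$) to the coarser fenestration $\mathscr{W}$ preserves the equal-weight property. One must check that the endpoints of $\mathscr{W}$ interleave correctly with those of $\mathscr{W}_i$, so that each $\mathscr{W}$-window really is a union of whole $\mathscr{W}_i$-windows with no window straddling a $\mathscr{W}$-endpoint; this is exactly where the set-theoretic fact $\Lambda \subseteq \Lambda_i$ is used. Everything else is bookkeeping: the finite additivity of cardinality over disjoint finite unions, and the transitivity of $=$ on sequences. I would keep the write-up short, emphasizing the coarsening lemma and then quoting Theorems~\ref{th:LamInt} and~\ref{th:iso} to close.
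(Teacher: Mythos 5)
Your proposal is correct and follows essentially the same route as the paper: both invoke Theorem~\ref{th:LamInt} to get the fenestration with endpoint set $\Lambda=\Lambda_1\cap\Lambda_2$, use $\Lambda\subseteq\Lambda_1$ and $\Lambda\subseteq\Lambda_2$ to transfer the equal-weight property to the coarser fenestration, and chain through $B$. The only cosmetic difference is that you phrase the transfer as coarsening of windows (summing weights over whole $\mathscr{W}_i$-windows), whereas the paper phrases the same fact as agreement of the counting functions $\kappa_A,\kappa_B,\kappa_C$ at each endpoint $\lambda\in\Lambda$.
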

\begin{proof}
Let sets $A$ and $B$ be isobaric under a fenestration $\mathscr{W}_1$
with endpoint set $\Lambda_1$ and let $B$ and $C$ be isobaric under
a fenestration $\mathscr{W}_2$ with endpoint set $\Lambda_2$.
By Theorem~\ref{th:iso}, $A$ and $B$ have equal magnums, $m(A) = m(B)$
and $B$ and $C$ have equal magnums, $m(B) = m(C)$. Therefore, $m(A) = m(C)$
or $\hat{\kappa_A}(\omega) = \hat{\kappa_C}(\omega)$.

By Theorem~\ref{th:LamInt}, $\Lambda = \Lambda_1\cap\Lambda_2$ is the
endpoint set of a fenestration of $\mathbb{N}$.  Let $\lambda$ be any
element of $\Lambda$. Then
\begin{eqnarray*}
\lambda\in\Lambda_1 &\implies& \kappa_A(\lambda) = \kappa_B(\lambda), \\
\text{and\ \quad }
\lambda\in\Lambda_2 &\implies& \kappa_B(\lambda) = \kappa_C(\lambda).
\end{eqnarray*}
Therefore, $\kappa_A(\lambda) = \kappa_C(\lambda)$. Thus, $A$ and $C$
are isobaric under the fenestration determined by $\Lambda$. Consequently,
isobary is transitive.  Since isobary is obviously reflexive and symmetric,
it is an equivalence relation.
\end{proof}


\addcontentsline{toc}{part}
{Part~II: Countable Sets}
\part*{Part~II: Countable Sets}
\label{sec:part2}

In Part~I, we focussed on subsets of $\N$. 
We will now present a more general definition, for magnums of sets
larger than $\N$ and for other sets that are not contained in $\N$.
All magnums are evaluated relative to some countable reference set,
which we normally denote by $R$. We assume $R$ is enumerated by a
monotone sequence $(r_n)_n$ and every subset $A\subset R$ inherits
an ordering from this sequence.



\section{Magnums of Countable Sets}
\label{sec:largersets}

In Part~I,  magnums were defined by (\ref{eq:MagDef}), which states that
$m(A): = \hat{\kappa_A}(\omega)$.  We can describe $m(A)$ as the
\emph{magnum of $A$ relative to $\N$} or the \emph{magnum of $A$ in $\N$}
and, to indicate this explicitly, denote it by $m(A|\N)$.
Since $A\subset\N$ implies $\kappa_A(n)<n$ for all $n$, we have, by extension,
$\hat{\kappa_A}(\omega)<\omega$, so that $m(A|\N)<\omega$ for all $A\subset\N$.

The theory presented in Part~I will now be developed
for application to a broader range of countable sets.
We will consider sets with larger magnums and
also sets that are not in $\mathscr{P}(\N)$.
In \S\ref{sec:RelMagOnPN} we define relative magnums for sets of natural numbers.
In \S\ref{sec:RelMagOnR} we define magnums relative to a general countable reference set $R$.
In \S\ref{sec:CartProducts} we determine the magnums of Cartesian products of sets.
In \ref{sec:SizeOfZ}, the magnum of the set $\Z$ of integers is determined.


\subsection{Relative Magnums for Subsets of $\N$}
\label{sec:RelMagOnPN}

For any two sets $A$ and $B$ in $\mathscr{P}(\N)$,
expressed as monotone sequences $(a_n)_n$ and $(b_n)_n$,
we define the characteristic sequence for $B$ \emph{relative to} $A$ as
$$
\chi_{B|A}(n) = 
\begin{cases}
1 & \text{if\ \ } a_n\in B \\
0 & \text{if\ \ } a_n\not\in B ,
\end{cases}
$$
and the counting sequence for $B$ \emph{relative to} $A$ as
$$
\kappa_{B|A}(1) = \chi_{B|A}(1) , \qquad\qquad
\kappa_{B|A}(n+1) = \kappa_{B|A}(n) + \chi_{B|A}(n+1) .
$$
We can now define the magnum of $B$ in $A$:
\begin{definition}
Let $A$ and $B$ be in $\mathscr{P}(\N)$.
Then, denoting the magnum of $A$ by $\hat{\kappa_{A|\N}}(\omega) = \theta_A$, 
\emph{the magnum of $B$ in $A$ is }
\begin{equation}
\boxed{
m(B|A) := \widehat{\kappa_{B|A}}(\theta_A) .
}  
\label{eq:mabdef}
\end{equation}
\end{definition}
Equation~\ref{eq:mabdef} reduces to the original
definition (\ref{eq:MagDef}) when the reference set is $\N$.
It implies 
\begin{equation}
m(B|A) = m(A\cap B|A),
\end{equation}
that is, elements of $B$ that are not in
the reference set $A$ \emph{do not count!} Thus, for example,
$$
m(2\N-1|2\N) = 0 \qquad\text{and}\qquad m(2\N | 2\N-1) = 0.
$$
\COMMENT{  
\textbf{Remark:} Perhaps we should rename $m_A(B)$ to be
\emph{``The magnum of $B$ in $A$''} rather than
\emph{``The magnum of $B$ relative to $A$''.}
\textbf{Remark:} The above definition seems to work fine if $A\subset B$
or if one or both of the sets $A$ and $B$ are finite. Also,
it implies results such as
$$
m(\N-\tfrac{1}{2}|\N) = 0 \qquad\text{and}\qquad m(\tfrac{1}{2}\N|\N) = \omega 
$$
and
$$
m(\N|\N-\tfrac{1}{2}) = 0 \qquad\text{and}\qquad m(\N|\tfrac{1}{2}\N) = \omega .
$$
}  
\textbf{Example:}
We now show that the magnum of the set of even numbers, $2\N$, in the set of
squares, $\N^{(2)}$, is equal to the magnum of the set of squares in the set of even numbers:
$$
m(2\N|\N^{(2)}) = m(\N^{(2)}|2\N)  = \frac{\sqrt{\omega}}{2} .
$$
The magnums $m(2\N|\N) = \omega/2$ and $m(\N^{2}|\N) = \sqrt{\omega}$
are already known.

(\emph{i}) \emph{Assuming the Reference Set is $A = 2\N$},
let us consider the magnum $m(B|A) = m(\N^{(2)}|2\N)$. 
The counting sequence for $B$ in $A$ is
$$
\kappa_{B|A}(n) = \left\lfloor \sqrt{\frac{n}{2}} \right\rfloor .
$$
Noting that $\theta_A = \hat{\kappa_{A|\N}}(\omega) = \omega/2$,
the magnum of $B$ relative to $A$ is given by (\ref{eq:mabdef}):
$$
m(B|A) = \widehat{\kappa_{B|A}}\left( {\omega}/{2} \right)
         = \sqrt{\frac{\omega/2}{2}} = \frac{\sqrt{\omega}}{2} .
$$

(\emph{ii}) \emph{Assuming the Reference Set is $B = \N^{(2)}$},
we consider $m(A|B) = m(2\N|\N^{(2)})$. We have
$$
\kappa_{A|B}(n) = \left\lfloor \frac{n}{2} \right\rfloor .
$$
(because every second square is even).
Now, noting that the magnum of $B$ is $\theta_B = \hat{\kappa_{B|\N}}(\omega) = \sqrt{\omega}$,
we have the magnum of $A$ relative to $B$:
$$
m(A|B) = \widehat{\kappa_{A|B}}(\sqrt{\omega}) = \frac{\sqrt{\omega}}{2} .
$$
We conclude that $m(2\N|\N^{(2)}) = m(\N^{(2)}|2\N)$.
We will prove below that this symmetry holds more generally.



\subsection{Magnums Relative to a General Reference Set $R$}
\label{sec:RelMagOnR}

In \S\ref{sec:theorems} (p.~\pageref{pg:chikap}) we defined the characteristic
sequence $\chi_A(n)$ and counting sequence $\kappa_A(n)$ of a set $A$ of natural numbers.
We generalize these sequences now, introducing the corresponding window functions $X$ and $K$.%
\footnote{$X$ and $K$ are upper-case $\chi$ and $\kappa$, or ``big-chi'' and ``big-kappa''.}
First, the definition of fenestrations is broadened to sets other than $\N$.
Then, magnums relative to arbitrary countable sets are introduced.


\subsubsection*{Fenestrations of Countable Sets}

We recall from Definition~\ref{def:genfen} (pg.~\pageref{pg:deffen}) that a
fenestration of $\N$ is a partition of $\N$ determined by a
strictly increasing function $(\Lambda(n))_n$
whose endpoint set $\Lambda$ is an omega set: $\Lambda\in\BOM$.
We now generalize this idea to arbitrary countable sets $R$.
Let $\Lambda:\N\to\N$ be a strictly increasing map and let $\Lambda(0)=0$.
If we define $N_n = (\Lambda(n-1),\Lambda(n)]_{\N}$, then
the family $\mathscr{N} = \{ N_n : n\in\N \}$ is a partition of $\N$,
with $\N = \biguplus_{n\in\N} N_n$.  The length of the $n$-th component
is $L_n = \Lambda(n)-\Lambda(n-1)$.
The extension of $\Lambda:\N\to\N$ to $\hat\Lambda:\Nn\to\Nn$ yields the
extension of $N_n$ to $\hat{N}(\nu)$.

The partition $\mathscr{N} = \{ N_n : n\in\N \}$ of $\N$ is a \emph{fenestration}
of $\N$ if $\omega$ is in the endpoint set of the extension; that is,
if there exists $\nu\in\Nn$ such that $\hat{\Lambda}(\nu) = \omega$.

We now let $R$ be an arbitrary countably infinite set with enumeration
$R = \{ r_n : n\in\N \}$. Let the magnum of $R$  be denoted by $\theta$
(this may be known or may remain to be determined).
The partition $\mathscr{N} = \{ N_n : n\in\N \}$ of $\N$ 
determines a partition of $R$: if we define $R_n = \{ r_k : k\in N_n \}$,
then  $R = \biguplus_{n\in\N} R_{n}$.
\begin{definition}
Let $R=(r_n)_n$ be a sequence with magnum $m(R) = \theta$.
The partition $\mathscr{R} = \{ R_n : n\in\N \}$ of $R$ is a
\emph{fenestration} of $R$ if $\theta$ is in the endpoint set
of the extension of $\Lambda$; that is, if there exists
$\nu\in\Nn$ such that  $\hat{\Lambda}(\nu) = \theta$.
\end{definition}


\subsubsection*{Window Functions $X$ and $K$}
\label{sec:X-and-K}

So far, we have used $\N$ with its canonical ordering as the reference set, 
to which other sets are compared. For clarity, we might have written $m(A|\N)$
everywhere we wrote $m(A)$.  We now consider an arbitrary countable
reference set $R$, with an enumeration $R = \{ r_n : n\in\N\}$ which 
determines an ordering $r_m\prec r_n \iff m<n$.  We assume that the magnum
of $R$ is $m(R)=\theta$. Let $\mathscr{R} = \{R_n : n\in\N\}$ be a fenestration
of $R$ with endpoint set $\Lambda$ and window lengths $L_n = \Lambda(n) - \Lambda(n-1)$.

Now we consider a countable set $A = \{a_n:n\in\N\}$ that is a subset of $R$.
For any $a_n\in A$ there is an element $r_m\in R$ such that $a_n = r_m$ and the
ordering of $R$ determines an ordering of $A$. The set $A$ may be partitioned into
discrete, finite components $A_n = A\cap R_n$ such that $A = \biguplus A_n$.
The indicator sequence and counting sequence of $A$ are $(X_A(n))_n$ and
$(K_A(n))_n$, where
$$
X_A(n) = |A_n| \qquad\text{and}\qquad K_A(n) = \sum_{k=1}^n X_A(k) . 
$$

\subsubsection*{Magnums Relative to $R$}

We now define the relative magnum of the set $A\subset R$:
\begin{definition} \label{def:relmag}
Let $R=(r_n)_n$ be a set with magnum $m(R) = \theta$ and let $A$ be a subset of $R$.
If $\mathscr{R}$ is a fenestration of $R$ with partition function $\Lambda$,
the {magnum of $A$ relative to $R$} is given by
\begin{equation}
\boxed{
m(A|R) := \hat{K_A}(\Lambdahatinv(\theta)) .
\label{eq:mra}
}  
\end{equation}
\end{definition}
It is clear that the sequence $(K_A(n))_n$ depends on the ordering of $R$. 
As a result, the magnum of $A$ relative to $R$ may change if the ordering
of $R$ changes. In the case where all elements of the fenestration
are singletons, $\Lambda(n) = n$ and 
$m(A|R) = \hat{K_A}(\theta) = \hat{\kappa_A}(\theta)$.

We may interpret (\ref{eq:mra}) in two ways: (i) if $\theta = m(R)$ is known,
the equation provides the magnum of $A$ relative to $R$; (ii) if the magnum of $A$
is already known, (\ref{eq:mra}) constrains the value of $\theta$, the
magnum of $R$. However, $\theta$ may not be determined uniquely by (\ref{eq:mra}).%
\footnote{
The partition function $\Lambda$ determines the values of the counting sequence
$K_R(n) = \Lambda(n)$.  Therefore, by (\ref{eq:mra}), \emph{the magnum of $R$
relative to itself} is $m(R|R) = \theta$.}

We now show that the magnum of $A$ relative to $R$ given in Definition~\ref{def:relmag}
does not depend on the fenestration of $R$ or on the ordering of $A$.
\begin{theorem}
(a) The magnum of $A\subset R$ is independent of the fenestration of $R$.
(b) If set $A$ is reordered to $\tilde{A}$, its magnum does not change:
$m(\tilde{A}|R) = m(A|R)$.  
\label{th:R-and-A}
\end{theorem}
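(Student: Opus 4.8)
The plan is to prove the two parts separately, reducing each to the key observation that the magnum $m(A|R)$, as defined by \eqref{eq:mra}, can be recovered from the bare counting data $(K_A(n))_n$ together with the endpoint set $\Lambda$ — and that this data is ``coordinate-free'' in the appropriate sense. For part~(a), suppose $\mathscr{R}_1$ and $\mathscr{R}_2$ are two fenestrations of $R$, with partition functions $\Lambda_1$ and $\Lambda_2$. By Theorem~\ref{th:LamInt} (generalized from $\N$ to $R$ by the same argument, since the omega-set condition is stated in terms of the numerical endpoint set $\Lambda(\N)\subseteq\N$), the intersection $\Lambda = \Lambda_1 \cap \Lambda_2$ is again the endpoint set of a fenestration $\mathscr{R}$. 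So it suffices to show that refining a fenestration does not change the computed magnum: if $\Lambda \subseteq \Lambda_1$ as subsets of $\N$, then $\hat{K_A^{(1)}}(\widehat{\Lambda_1}^{-1}(\theta)) = \hat{K_A}(\hat\Lambda^{-1}(\theta))$, where $K_A^{(1)}$ and $K_A$ are the window-counting sequences for $\mathscr{R}_1$ and $\mathscr{R}$ respectively. The point is that $K_A^{(1)}(n) = \kappa_{A|R}(\Lambda_1(n))$ and $K_A(n) = \kappa_{A|R}(\Lambda(n))$ for a single underlying ``fine'' counting function $\kappa_{A|R}$ on $R$ — namely the one counting elements of $A$ among $r_1,\dots,r_j$ — so both composite functions $K_A^{(1)}\circ\widehat{\Lambda_1}^{-1}$ and $K_A\circ\hat\Lambda^{-1}$ agree with $\widehat{\kappa_{A|R}}$ wherever that is evaluated at $\theta$. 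Applying the Axiom of Extension (which guarantees $\widehat{K_A^{(1)}} = \widehat{\kappa_{A|R}}\circ\widehat{\Lambda_1}$, hence $\widehat{K_A^{(1)}}\circ\widehat{\Lambda_1}^{-1} = \widehat{\kappa_{A|R}}$) closes the argument, and evaluating at $\theta$ gives equality of the two magnums.

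For part~(b), I would argue that reordering $A$ to $\tilde A$ while keeping the ordering of $R$ fixed is really the statement that only the \emph{subset} $A\subseteq R$ matters, not the chosen enumeration of $A$. Indeed, the window-counting sequence $K_A(n) = |A\cap R_n|$ depends only on the set $A$ and the fenestration blocks $R_n$; it is manifestly insensitive to any internal reshuffling of $A$. (This is implicit in the very notation: $K_A(n)$ was defined as a cardinality $|A_n|$, which carries no ordering.) Thus $K_{\tilde A} = K_A$ as sequences, their extensions coincide by uniqueness under the Axiom of Extension, and $m(\tilde A|R) = \hat{K_{\tilde A}}(\hat\Lambda^{-1}(\theta)) = \hat{K_A}(\hat\Lambda^{-1}(\theta)) = m(A|R)$. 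The only subtlety is to be clear about what ``reorder $A$'' means: if it meant changing how $A$ sits inside $R$ (i.e.\ which element of $R$ each $a_n$ is identified with), the magnum certainly could change; the intended reading is that $A$ is a fixed subset of $R$ and we merely relabel its elements, which affects nothing.

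The main obstacle I anticipate is part~(a): making rigorous the claim that there is a single ``fine'' counting function $\kappa_{A|R}$ underlying every fenestration, and that $\widehat{K_A} = \widehat{\kappa_{A|R}}\circ\hat\Lambda$ in the surnatural domain — this requires the composition clause of the Axiom of Extension together with care that $\Lambda$ is strictly increasing so that $\hat\Lambda^{-1}$ is well-defined on the relevant image, and that $\theta$ genuinely lies in $\hat\Lambda(\Nn)$ (which is exactly the defining condition for $\mathscr{R}$ to be a fenestration of $R$). One should also check the non-uniqueness caveat flagged after Definition~\ref{def:relmag} does not bite here: we are given that $\theta = m(R)$ is a fixed value, not something to be solved for, so the forward evaluation $\hat{K_A}(\hat\Lambda^{-1}(\theta))$ is unambiguous once $\theta$ is fixed. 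Part~(b), by contrast, should be essentially immediate once the notational point is settled.
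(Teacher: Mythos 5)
Your proposal is correct, and part~(b) coincides with the paper's own argument (the window counts $X_{\tilde A}(n)=|\tilde A\cap R_n|=|A\cap R_n|$ depend only on the subset, so $K_{\tilde A}=K_A$). For part~(a) you take a genuinely different, and in fact more economical, route. The paper first forms the common endpoint set $\Lambda=\Lambda_1\cap\Lambda_2$ via Theorem~\ref{th:LamInt} (with $\theta$ in place of $\omega$), writes $\Lambda(n)=\Lambda_1(f(n))=\Lambda_2(g(n))$ for increasing reindexing maps $f,g$, shows $K_A=K^1_A\circ f=K^2_A\circ g$ at the level of window counts, and then extends these compositions to $\Nn$ to compare the two given fenestrations through the coarser common one. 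You instead factor every window-counting sequence through the single fine counting function $\kappa_{A|R}(j)=|A\cap\{r_1,\dots,r_j\}|$, namely $K^{(i)}_A=\kappa_{A|R}\circ\Lambda_i$, and use the composition clause of the Axiom of Extension plus the fenestration condition $\theta\in\hat\Lambda_i(\Nn)$ to get $\hat{K^{(i)}_A}\bigl(\hat\Lambda_i^{-1}(\theta)\bigr)=\widehat{\kappa_{A|R}}(\theta)$ for each $i$; this exhibits the common value explicitly and shows the relative magnum is just $\widehat{\kappa_{A|R}}(\theta)$, i.e.\ the value obtained from the singleton fenestration. With that argument your appeal to Theorem~\ref{th:LamInt} is actually redundant (you never need the intersection fenestration), and the slight terminological slip in your reduction step --- $\Lambda\subseteq\Lambda_1$ makes $\mathscr{R}_1$ a \emph{refinement} of $\mathscr{R}$, not the other way round --- is immaterial because your final argument bypasses that reduction entirely. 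What the paper's route buys is that it works purely with the window data $K^1_A,K^2_A$ and never introduces the fine counting function; what yours buys is a shorter proof and the useful closed form $m(A|R)=\widehat{\kappa_{A|R}}(\theta)$, with the same reliance on the composition clause of the Axiom of Extension and on the strict monotonicity of $\Lambda_i$ that both proofs share.
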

\begin{proof}
(a) Let $m(R)=\theta$ and $\Lambda_1$ and $\Lambda_2$ be two partition
functions on $\N$ that determine fenestrations of $R$,
namely $(R^i_n)_n=\{\Lambda_i(n-1)+1,\ldots, \Lambda_i(n)\}$ for $i\in\{1,2\}$,
such that $\hat\Lambda_1(\nu_1)=\hat\Lambda_2(\nu_2)=\theta$. 
By Theorem~\ref{th:LamInt}, applied with $\theta$ in place of $\omega$,
we have a fenestration $\mathscr{R}$ of $R$ with endpoint set
$\Lambda = \Lambda_1 \cap \Lambda_2$. 
Moreover $\theta=\hat\Lambda(\nu)$ for some $\nu\in\Nn$. 

We may write $\Lambda(n) = \Lambda_1(f(n))=\Lambda_2(g(n))$ for increasing functions
$f,g:\N\to\N$.  We let $f(0)=g(0)=0$.   It follows $\hat\Lambda(\mu)=\hat\Lambda_1(\hat
f(\mu))=\hat\Lambda_2(\hat g(\mu))$ for all $\mu\in\Nn$ and in
particular $\theta=\hat\Lambda(\nu)=\hat\Lambda_1(\hat
f(\nu))=\hat\Lambda_2(\hat g(\nu))$.  Thus $\nu_1= \hat f(\nu)$ and
$\nu_2=\hat g(\nu)$.

Since $R_n=\{\Lambda(n-1)+1,\ldots,\Lambda(n)\} =
\{\Lambda_1(f(n-1))+1,\dots,\Lambda_1(f(n))\}=R^1_{f(n-1)+1}\cup
R^1_{f(n-1)+2}\cup\cdots\cup R^1_{f(n)}$ we have $X_A(n)=|A\cap R_n|=|A\cap
R^1_{f(n-1)+1}|+\cdots + |A\cap R^1_{f(n)}|$ and $K_A(n)=\sum_{k=1}^n
X_A(k)= \sum_{k=1}^{f(n)}X^1_A(k)=K^1_A(f(n))$.   Similarly
$K_A(n)=K^2_A(g(n))$.  Extending this to $\Nn$ we have that
$\hat{K_A}(\Lambdahatinv(\theta)) = \hat{K_A}(\nu) = 
\hat{K^1_A}(\hat{f}(\nu)) = \hat{K^1_A}(\nu_1) =
\hat{K^1_A}(\hat \Lambda_1^{-1}(\theta))$.
Similarly, $\hat{K_A}(\Lambdahatinv(\theta)) = \hat{K^2_A}(\hat \Lambda_2^{-1}(\theta))$,
so that $m(A|R)$ is independent of the fenestration of $R$.

(b) 
Let $\tilde{A}$ be a reordering of $A$. The independence of the magnum of $A$ on its
ordering follows immediately from the fact that
$X_{\tilde{A}}(n)=|\tilde{A}_n|=|\tilde{A}\cap R_n|=|A\cap R_n|=X_A(n)$,
so that $K_{\tilde{A}}(n) = K_A(n)$.  In particular, setting $A=R$ we see that the magnum
of $R$ relative to itself remains unchanged under a reordering: $m(\tilde{R}|R)=m(R|R)$.  
\end{proof}


\textbf{Example:}
We consider the magnum of $\N$ relative to $R = \half\N$, the set of positive half-integers:
\begin{equation}
\half\N =  \{\tfrac{1}{2}, 1,\tfrac{3}{2}, 2,\tfrac{5}{2}, 3,\tfrac{7}{2}, 4, \tfrac{9}{2}, 5,\dots \}.
\nonumber
\end{equation}
Choosing $R=\half\N$ as a reference set, we consider the magnum of its
subset $\N$ relative to $R$.
We denote the magnum of $R$ by $\theta$ and fenestrate it by doubleton windows
$R_n = \{ n-\half, n \}$ so that $\Lambda(n) = 2n$ and $\Lambda^{-1}(n) = n/2$. 
The indicator sequence of $\N$ is $X_{\N}(n) = | \N\cap R_n | = 1$ and
the counting sequence is $K_{\N}(n) = n$.  Therefore, using (\ref{eq:mra}),
$$
m_{R}(\N) = \widehat{K_{\N}}(\Lambdahatinv(\theta)) = \theta/2. 
$$
Thus, the relative sizes of $\N$ and $\half\N$ are exactly as expected.

To illustrate the sensitivity to the ordering of the reference set, let us consider
\begin{equation}
\widetilde{R} = \half\widetilde{\N} = 
 \{ \tfrac{1}{2},\tfrac{3}{2},1, \tfrac{5}{2},\tfrac{7}{2}, 2, \tfrac{9}{2},\tfrac{11}{2}, 3, \dots \}.
\nonumber
\end{equation}
as the reference set (where the elements are in ascending order).
We denote the magnum of $\tilde{R}$ by $\theta$ and fenestrate it by windows
$R_n = \{\tfrac{4n-3}{2},\tfrac{4n-1}{2},n\}$ so that $\Lambda(n) = 3n$ and $\Lambda^{-1}(n) = n/3$. 
The counting sequence is again $K_{\N}(n) = n$.  Then, using (\ref{eq:mra}), 
the magnum of $\N$ relative to $\tilde{R}$ is $\theta/3$.


\subsubsection*{Symmetry of the Relative Magnums}

Let $R$ be an ordered countable set with magnum $\theta_R$ and with subsets $A$ and $B$.
Assume that $R$ is enumerated as the sequence $R = (r_n)_{n\in\N}$
and that $A$ and $B$ inherit orderings as subsequences of $R$.
We now establish a symmetry relationship for the magnums of $A$ and $B$
relative to each other:
\begin{theorem} The magnum of $A$ in $B$ is equal to the magnum of $B$ in $A$:
\label{th:symmAB}
$$
m(A|B) = m(B|A) .
$$
\end{theorem}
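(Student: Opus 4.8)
The plan is to show that $m(A|B)$ and $m(B|A)$ both equal $m(A\cap B\,|\,R)$, whence the theorem is immediate. Since the relative counting sequence $\kappa_{B|A}$ records only those $a_n$ that lie in $B$, it is literally the same sequence as $\kappa_{A\cap B|A}$, so $m(B|A)=m(A\cap B\,|\,A)$; symmetrically $m(A|B)=m(A\cap B\,|\,B)$ (both facts are already noted after Definition~\ref{def:relmag}). It therefore suffices to prove the \emph{tower property}:
$$
C\subseteq B\subseteq R\ \Longrightarrow\ m(C\,|\,R)=m(C\,|\,B),
$$
after which one application with $C=A\cap B\subseteq B\subseteq R$ and one with $C=A\cap B\subseteq A\subseteq R$ give $m(A|B)=m(A\cap B\,|\,R)=m(B|A)$.

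For the tower property we may assume $R$ is infinite (the finite case is trivial). By Theorem~\ref{th:R-and-A} we may compute all three magnums using the singleton fenestrations of $R$ and of $B$, for which $\Lambda(n)=n$ and $\Lambdahatinv$ is the identity; thus $m(C|R)=\widehat{\kappa_{C|R}}(\theta_R)$, $m(C|B)=\widehat{\kappa_{C|B}}(\theta_B)$, and $\theta_B=m(B)=\widehat{\kappa_{B|R}}(\theta_R)$. The key combinatorial identity is
$$
\kappa_{C|R}(n)=\kappa_{C|B}\bigl(\kappa_{B|R}(n)\bigr)\qquad(n\in\N),
$$
where $\kappa_{C|B}$ is extended by $\kappa_{C|B}(0)=0$, exactly as the analogous counting map is handled in the proof of Theorem~\ref{th:R-and-A}. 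Indeed, put $m=\kappa_{B|R}(n)$; since the enumeration of $B$ is precisely the subsequence of $(r_k)_k$ listing the elements of $B$ in order, the members of $B$ occurring among $r_1,\dots,r_n$ are exactly $b_1,\dots,b_m$, and as $C\subseteq B$ every element of $C$ among $r_1,\dots,r_n$ is one of those; hence $\kappa_{C|R}(n)=|\{\,j\le m:b_j\in C\,\}|=\kappa_{C|B}(m)$. So $\kappa_{C|R}=\kappa_{C|B}\circ\kappa_{B|R}$ as non-decreasing maps $\N\to\N_0$, and the composition clause of the Axiom of Extension gives $\widehat{\kappa_{C|R}}=\widehat{\kappa_{C|B}}\circ\widehat{\kappa_{B|R}}$ on $\Nn$. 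Evaluating at $\theta_R$ and using $\widehat{\kappa_{B|R}}(\theta_R)=\theta_B$,
$$
m(C|R)=\widehat{\kappa_{C|R}}(\theta_R)=\widehat{\kappa_{C|B}}\bigl(\widehat{\kappa_{B|R}}(\theta_R)\bigr)=\widehat{\kappa_{C|B}}(\theta_B)=m(C|B),
$$
which proves the tower property and hence the theorem.

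The substance of the argument is the composition identity; the rest is bookkeeping, and that is where one must be careful. The point to pin down is that the surnatural number at which one extends $\kappa_{A\cap B|B}$ to form $m(A\cap B\,|\,B)$ is genuinely $m(B|R)=\widehat{\kappa_{B|R}}(\theta_R)$ --- i.e.\ that ``the magnum of the reference set $B$'' always means $B$ measured inside the ambient set $R$. This is legitimate because of the fenestration-independence established in Theorem~\ref{th:R-and-A}, together with the identity $m(R|R)=\theta$ recorded in the footnote to Definition~\ref{def:relmag}, which show the two readings coincide. The minor degeneracy when $b_1$ appears late in the $R$-ordering (so $\kappa_{B|R}(n)=0$ for small $n$) is absorbed by the convention $\kappa_{C|B}(0)=0$ and, by Theorem~\ref{th:EventualEquality}, has no effect on the extensions.
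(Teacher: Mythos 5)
Your proposal is correct, and it reaches the result by a somewhat different decomposition than the paper. The paper proves the identity $\kappa_{B|A}(\kappa_{A|R}(n)) = \kappa_{A|B}(\kappa_{B|R}(n))$ head-on, by observing that with $k=\kappa_{A|R}(n)$ and $\ell=\kappa_{B|R}(n)$ both sides equal $|A_k\cap B_\ell|$, the number of elements of $A\cap B$ among $r_1,\dots,r_n$; it then invokes the Axiom of Extension and evaluates at $\nu=\theta_R$, using $\widehat{\kappa_{A|R}}(\theta_R)=\theta_A$ and $\widehat{\kappa_{B|R}}(\theta_R)=\theta_B$. You instead factor everything through the pivot $m(A\cap B\,|\,R)$ and a tower (transitivity) lemma $m(C|R)=m(C|B)$ for $C\subseteq B\subseteq R$, whose proof --- the composition identity $\kappa_{C|R}=\kappa_{C|B}\circ\kappa_{B|R}$, the composition clause of the Axiom of Extension, and evaluation at $\theta_R$ --- is the same engine as the paper's: specialised to $C=A\cap B$, your identity says precisely that each of the paper's two composites equals $\kappa_{A\cap B|R}$. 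What your organisation buys is a reusable intermediate statement (relative magnums are invariant along chains of reference sets) and an explicit identification of the common value of $m(A|B)$ and $m(B|A)$ as $m(A\cap B|R)$; it also forces the bookkeeping you rightly flag, namely that the magnum $\theta_B$ of the intermediate reference set must be read as $m(B|R)=\widehat{\kappa_{B|R}}(\theta_R)$, justified via Theorem~\ref{th:R-and-A}, and that the degenerate values $\kappa_{B|R}(n)=0$ are harmless by Theorem~\ref{th:EventualEquality}. The paper's direct comparison is a little shorter because it only ever evaluates extensions at $\theta_R$ and never needs $m(\cdot\,|B)$ as a standalone quantity; your version is marginally more general and makes the symmetry transparent ("both sides count $A\cap B$ in $R$"), at the cost of the extra care about what the reference magnum of $B$ means.
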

\begin{proof}
For any $n\in\N$, we define $R_n = \{r_1, r_2, \dots , r_n \}$.
If we let $\kappa_{A|R}(n) = | A\cap R_n| = k$, then $a_k \le r_n$ and $a_{k+1} > r_n$.
In a similar manner, if $\kappa_{B|R}(n) = | B\cap R_n| = \ell$, then $b_\ell \le r_n$ and $b_{\ell+1} > r_n$.
It follows that
\begin{eqnarray*}
\kappa_{B|A}(\kappa_{A|R}(n)) = \kappa_{B|A}(k) = | A_k\cap B | , \\
\kappa_{A|B}(\kappa_{B|R}(n)) = \kappa_{A|B}(\ell) = | A\cap B_\ell | .
\end{eqnarray*}
However, noting that $|A_k\cap B| = |A_k\cap B_\ell| = |A\cap B_\ell|$, this implies
$\kappa_{B|A}(\kappa_{A|R}(n)) = \kappa_{A|B}(\kappa_{B|R}(n))$. By the Axiom of Extension,
$\hat{\kappa_{B|A}}(\hat{\kappa_{A|R}}(\nu)) = \hat{\kappa_{A|B}}(\hat{\kappa_{B|R}}(\nu))$ 
for $\nu\in\Nn$. We apply this for $\nu = \theta_R$, noting that
$\hat{\kappa_{A|R}}(\theta_R) = \theta_A$ and $\hat{\kappa_{B|R}}(\theta_R) = \theta_B$,
to obtain 
$$
\hat{\kappa_{B|A}}(\theta_A) = \hat{\kappa_{A|B}}(\theta_B)
\qquad\text{or}\qquad m(B|A) = m(A|B), 
$$
which is the required symmetry result.
\end{proof}


\subsubsection*{Theorems for Relative Magnums}

All the theorems of \S\ref{sec:SomeTheorems} can be generalised by replacing
the reference set $\N$ by an arbitrary ordered countable set $R$ with magnum $\theta$.
The proofs follow with only minor modifications. For example, a partition
$\{R_n:n\in\N\}$ of $R$ induces partitions $\{A_n:n\in\N\}$ and $\{B_n:n\in\N\}$
of subsets $A$ and $B$, where $A_n = A\cap R_n$ and $B_n = B\cap R_n$.
Then $A$ and $B$ are isobaric in $R$ if $|A_n| =  |B_n|$ for all $n$,
and isobary is an equivalence relation on $\mathscr{P}(R)$.
In a similar manner, the theorems in \S\ref{sec:M-theorems} generalise for
magnums relative to an arbitrary ordered countable set $R$.
For an example, see Theorem~\ref{th:SurrDen} below.

\subsubsection*{Surreal Density}

We now broaden the density theorem
(Theorem~\ref{th:DensityTheorem}) to include density relative to a
countable reference set $R$.  By
analogy with the density sequence $\rho_A(n)=\kappa_A(n)/n=
\kappa_A(n)/\kappa_{\N}(n)$, with limit $\rho(A)$ (where it exists),
we define the relative density sequence \[\rho_{A|R}(k)= K_A(k)/K_R(k)\]
and the relative density $\rho(A|R)$ to be the limit of this sequence
where it exists.  If we now define the \emph{surreal density} of a set
$A\subseteq R$ as
$$
{\sigma}_R(A) := \frac{\hat{K_A}(\Lambdahatinv
  (\theta))}{\hat{K_{R}}(\Lambdahatinv (\theta))}
           = \frac{m(A|R)}{\theta},
$$
we can state and prove the Surreal Density Theorem:
\begin{theorem}\label{th:SurrDen}
  Let the reference set $R$ have magnum $\theta$ and $A\subset R$ have
  relative density $\rho_R(A) \in [0,1]$.  Then   $m(A|R)\approx \rho(A|R)\theta$.
\end{theorem}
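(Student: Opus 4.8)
The plan is to transcribe the proof of the Density Theorem (Theorem~\ref{th:DensityTheorem}), replacing $\N$ by $R$ and $\omega$ by $\theta$, and to lean on Lemma~\ref{lemma:ratio}. First I would fix a fenestration $\mathscr{R}$ of $R$ with partition function $\Lambda$; by Definition~\ref{def:relmag} and (\ref{eq:mra}) we have $m(A|R) = \hat{K_A}(\Lambdahatinv(\theta))$, and since $K_R(n) = \Lambda(n)$ for all $n$ (see the footnote in \S\ref{sec:RelMagOnR}) we get $\hat{K_R} = \hat{\Lambda}$, hence $\hat{K_R}(\Lambdahatinv(\theta)) = \hat{\Lambda}(\Lambdahatinv(\theta)) = \theta$, because $\Lambdahatinv$ is the inverse of $\hat{\Lambda}$ and $\theta$ lies in its range by the fenestration hypothesis. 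This identifies the surreal density defined just above the theorem as $\sigma_R(A) = m(A|R)/\theta$. By Theorem~\ref{th:R-and-A}(a) the left side does not depend on the chosen fenestration, so fixing one $\Lambda$ at the outset is legitimate.

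Next I would check that $\nu := \Lambdahatinv(\theta)$ belongs to $\Nn\setminus\N$: since $R$ is countably infinite, $\theta = m(R)$ is infinite, and since $\Lambda:\N\to\N$ sends finite arguments to finite values, so does $\hat{\Lambda}$; hence $\hat{\Lambda}(\nu) = \theta$ forces $\nu$ to be infinite. This is exactly what is needed to be allowed to apply Lemma~\ref{lemma:ratio} at $\nu$.

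The main step is then to apply Lemma~\ref{lemma:ratio} to the non-decreasing functions $f = K_A$ and $g = K_R$ from $\N$ to $\N_0$. The hypothesis that the relative density $\rho(A|R) = \lim_{k\to\infty} K_A(k)/K_R(k)$ exists (and lies in $[0,1]$) gives, for every $\nu\in\Nn\setminus\N$, $\hat{K_A}(\nu)/\hat{K_R}(\nu) \approx \rho(A|R)$. Specialising to $\nu = \Lambdahatinv(\theta)$ and substituting the identifications $\hat{K_A}(\nu) = m(A|R)$ and $\hat{K_R}(\nu) = \theta$ from the first step yields $m(A|R)/\theta \approx \rho(A|R)$, equivalently $m(A|R) = \rho(A|R)\theta + o(\theta)$, which is the assertion.

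I expect no serious obstacle here: the content is a faithful translation of the $\N$-case, and the only care needed is the bookkeeping around $\Lambdahatinv$ — that it is a well-defined surnatural number (guaranteed because $\mathscr{R}$ is a fenestration of $R$, so $\theta$ is an image point of $\hat{\Lambda}$), that it is infinite, and that $\hat{\Lambda}$ composed with it returns $\theta$. All of these follow directly from the definition of a fenestration of a countable set and from the composition clause of the Axiom of Extension; everything else is routine.
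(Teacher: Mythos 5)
Your proposal is correct and follows essentially the same route as the paper's proof: apply Lemma~\ref{lemma:ratio} with $f=K_A$, $g=K_R$ at $\nu=\Lambdahatinv(\theta)$ and identify $\hat{K_A}(\nu)=m(A|R)$ and $\hat{K_R}(\nu)=\theta$. Your extra bookkeeping (verifying $\hat{K_R}=\hat\Lambda$, that $\nu$ is infinite, and fenestration independence via Theorem~\ref{th:R-and-A}) only makes explicit what the paper leaves implicit.
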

\begin{proof}
We apply Lemma~\ref{lemma:ratio} (\S\ref{sec:DenTheorem}) with $f=K_A$ and $g=K_R$,
where $\nu=\hat\Lambda^{-1}(\theta)$.  Since, by definition,
$\lim_n {f(n)}/{g(n)}=\rho(A|R)$, we find that
$$
{\sigma}(A|R) = \frac{m(A|R)}{\theta}
	      = \frac{\hat K_A(\hat \Lambda^{-1}(\theta))}
              {\hat K_R(\hat\Lambda^{-1}(\theta))}
            \approx \rho(A|R)
$$
or, equivalently, $m(A|R)=\rho(A|R)\theta + o(\theta)$.
\end{proof}

\subsubsection*{Bayes' Rule}


\begin{definition}
\label{def:sigAinB}
Let $A$ and $B$ be two subsets of a reference set $R$.%
\footnote{We do not assume that either $A\subset B$ or $B\subset A$ necessarily holds.}
The surreal density of $A$ in $B$ is defined as 
\begin{equation}
\boxed{
\sigma(A|B) := \frac{m(A\cap B|R)}{m(B|R)}
}  
\label{eq:surrdensdef}
\end{equation}
\end{definition}
We assume that the magnum of $R$ is $m(R|R) = \theta$. Then 
$$
\sigma(A\cap B|R) = \frac{m(A\cap B|R)}{\theta}
\qquad\text{and}\qquad
\sigma(B|R) = \frac{m(B|R)}{\theta} ,
$$
from which we immediately deduce 
$$
\sigma(A|B) = \frac{\sigma(A\cap B|R)}{\sigma(B|R)} .
$$
Noting a similar relationship with $A$ and $B$ exchanged, we have
$$
\sigma(A\cap B|R) =  \sigma(A|B)\sigma(B|R)
\qquad\text{and}\qquad
\sigma(A\cap B|R) =  \sigma(B|A)\sigma(A|R) .
$$
Equating the right hand sides of these equations yields
\begin{equation}
\boxed{
\sigma(A|B) =\frac{\sigma(A|R)}{\sigma(B|R)} \sigma(B|A) .
}  
\label{eq:surrBayes}
\end{equation}
Now, interpreting the densities as probabilities on a sample space $R$,
we change notation as follows:
for probabilities, $\sigma(A|R) \to P(A)$  and $\sigma(B|R) \to P(B)$
and, for conditional probabilities,
$\sigma(A|B) \to P(A|B)$ and $\sigma(B|A) \to P(B|A)$, so that
(\ref{eq:surrBayes}) becomes
$$
P(A|B) = \frac{P(A)}{P(B)} P(B|A) .
$$
which is a standard expression for conditional probability often
referred to as Bayes' Rule. Thus, (\ref{eq:surrBayes}) is an 
extension of Bayes' Rule to the surnatural domain.

\textbf{Example.} 
Suppose $A = 2\N$, $B = 3\N$ and $R = \N$.
So, $A\subset R$ and $B\subset R$ but $A\not\subset B$ and $B\not\subset A$. 
The magnums are $m(A|R) = \tfrac{\omega}{2}$, $m(B|R) = \tfrac{\omega}{3}$ and $m(R|R) = \omega$.
Also, $A\cap B = 6\N$ so $m(A\cap B|R) = \tfrac{\omega}{6}$.
From Definition~\ref{def:sigAinB} we have 
$\sigma(A|R) = \frac{1}{2}$, $\sigma(B|R) = \frac{1}{3}$, 
$\sigma(B|A) = \frac{1}{3}$ and $\sigma(A|B) = \frac{1}{2}$,
so that (\ref{eq:surrBayes}) is satisfied.


\subsection{Cartesian Products of Sets}
\label{sec:CartProducts}
    
It is necessary to distinguish between ``the union of disjoint sets''
and ``the disjoint union of sets''.  We have denoted the union of disjoint sets
$A$ and $B$ by $A\uplus B$.  This is essentially a mnemonic device, reminding us
that the sets are disjoint.  But it is useful to consider the disjoint union
of sets that may not be disjoint. Thus, we define
\begin{equation}
A \sqcup B := (A\times\{1\}) \uplus (B\times\{2\}) \,.
\label{eq:disjointUnion}
\end{equation}
We observe that neither set $A$ nor set $B$ is a subset of $A\sqcup B$.

If we define $\kappa_{A\times\{1\}}(n) = |\, (A\times\{1\}) \cap (I_n \times\{1\})\, |$,
where, as usual, $I_n = \{1, 2, \dots, n\}$, 
then the counting sequences of $A$ and $A\times\{1\}$ are identical:
$\kappa(A\times\{1\}) = \kappa(A)$. Moreover, as $A\times\{1\}$ and $B\times\{2\}$
are disjoint, $\kappa(A\sqcup B) = \kappa(A) + \kappa(B)$. We will see that
many consequences, such as $m(A\sqcup B) = m(A)+m(B)$, $m(\N\sqcup I_n)
= \omega + n$ and $m(\N\sqcup\N) = 2\omega$, follow from these results.

\subsubsection*{Square Ordering}

For Cartesian products of ordered sets $U$ and $V$, it is convenient to introduce
a \emph{square ordering} of $W = U\times V$.
Let $\{U_m:m\in\N\}$ and $\{V_n:n\in\N\}$ be partitions of $U$ and $V$ into
finite components.  We assume that the components are ordered so that
$(k_1\in U_{m_1}) \land (k_2\in U_{m_2}) \land (m_1 < m_2) \implies (k_1 < k_2)$,
and similarly for $V$.  We define the $n$-th component of $U\times V$ as the
union of $U_k\times V_\ell$ where the maximum of $\{k,\ell\}$ equals $n$. Thus,
$$
W_n = \biguplus_{k,\ell} \{ U_k\times V_\ell : \max\{k,\ell\}=n \} 
\qquad\text{and}\qquad W = U\times V = \biguplus_{n} W_n .
$$
The indicator sequence of $W$ is $X_W(n) = |W_n|$ and the counting
sequence is $K_W(n) = \sum_{k=1}^n X_W(k)$.


\begin{theorem} 
\label{th:SqCup}
Let $U$ and $V$ be two sets with counting sequences
$K(U)$ and $K(V)$.%
\footnote{We assume that $U$ and $V$ are subsets of the same reference
  set $R$.  See Remarks in this subsection.}
Then the counting sequence of $U\sqcup V$ is
$K(U\sqcup V) = K(U) + K(V)$ and the magnum of the disjoint union
is $m(U\sqcup V) = m(U) + m(V)$.
\end{theorem}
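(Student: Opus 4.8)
The plan is to reduce the statement to two ingredients already in hand: the componentwise additivity of the counting sequences of genuinely disjoint sets (Corollary~\ref{cor:k1}, in its form relative to a general reference set), and the clause of the Axiom of Extension asserting that the extension map preserves sums. The paragraph preceding the theorem already records the key observation that $u\mapsto(u,1)$ does not change counting sequences, so almost all of what remains is bookkeeping.

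Concretely, I would fix a common reference set $R=(r_n)_n$ for $U$ and $V$, with magnum $\theta$ and a fenestration $\mathscr{R}=\{R_n:n\in\N\}$ with partition function $\Lambda$, inducing $U_n=U\cap R_n$ and $V_n=V\cap R_n$. Regard $U\sqcup V$ as a subset of $R\sqcup R$, fenestrated by the windows $S_n:=(R_n\times\{1\})\uplus(R_n\times\{2\})$; then $R\sqcup R$ has partition function $2\Lambda$ and, by consistency, magnum $2\theta$, while the $n$-th window of $U\sqcup V$ is $(U_n\times\{1\})\uplus(V_n\times\{2\})$. Since $u\mapsto(u,1)$ is an order isomorphism of $U$ onto $U\times\{1\}$, and likewise for $V$, we get $X_{U\times\{1\}}(n)=|U_n|=X_U(n)$ and $X_{V\times\{2\}}(n)=X_V(n)$; and since $U\times\{1\}$ and $V\times\{2\}$ are genuinely disjoint, Corollary~\ref{cor:k1} gives $X_{U\sqcup V}=X_U+X_V$, hence $K_{U\sqcup V}(n)=K_U(n)+K_V(n)$ for all $n$, which is the first assertion. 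The Axiom of Extension then gives $\widehat{K_{U\sqcup V}}=\widehat{K_U}+\widehat{K_V}$ on $\Nn$, and evaluating at the argument prescribed by Definition~\ref{def:relmag}, namely $\widehat{(2\Lambda)}^{-1}(2\theta)=\Lambdahatinv(\theta)$ --- which is exactly where $m(U|R)=\widehat{K_U}(\Lambdahatinv(\theta))$ and $m(V|R)=\widehat{K_V}(\Lambdahatinv(\theta))$ are computed --- yields
$$
m(U\sqcup V)=\widehat{K_{U\sqcup V}}(\Lambdahatinv(\theta))=\widehat{K_U}(\Lambdahatinv(\theta))+\widehat{K_V}(\Lambdahatinv(\theta))=m(U)+m(V).
$$

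The genuinely routine steps are the order isomorphism $U\to U\times\{1\}$ and the componentwise additivity; the point that needs care --- and the main obstacle to a clean write-up --- is the reference-set bookkeeping for $U\sqcup V$. One must justify that $R\sqcup R$ with the doubled fenestration is the natural (indeed consistency-forced) ambient set, that this really is a fenestration (its endpoint set $2\Lambda$ is an omega set whenever $\Lambda$ is, since $2\hat\Lambda(\nu)=2\theta$ forces $\nu=\Lambdahatinv(\theta)\in\Nn$), and that $m(R\sqcup R)=2\theta$, so that the evaluation point $\Lambdahatinv(\theta)$ above is the correct one. Once that is pinned down, additivity passes from the level of counting sequences to the level of magnums with no further computation.
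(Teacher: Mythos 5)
Your proposal is correct and follows essentially the same route as the paper: the tagging maps $u\mapsto(u,1)$, $v\mapsto(v,2)$ leave counting sequences unchanged, genuine disjointness gives $K(U\sqcup V)=K(U)+K(V)$, and the Axiom of Extension transfers this to magnums. The only difference is that you fold into the proof the reference-set bookkeeping (the fenestration of $R\sqcup R$ by $2\Lambda$, its magnum $2\theta$, and the evaluation point $\Lambdahatinv(\theta)$), which the paper defers to Remark~(b) immediately after the theorem.
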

\begin{proof}
The counting sequences of $U$ and $U\times\{1\}$ are identical, as are
those of $V$ and $V\times\{2\}$. Thus,
$K(U\times\{1\}) = K(U)$ and $K(V\times\{2\}) = K(V)$.
Moreover, as $U\times\{1\}$ and $V\times\{2\}$ are disjoint,
$K(U\sqcup V) = K(U) + K(V)$. Extending the domains to $\Nn$,
we find that $m(U\sqcup V) = m(U) + m(V)$.
\end{proof}
\begin{theorem} 
\label{th:CarPro}
Let $U$ and $V$ be two sets with counting sequences
$K(U)$ and $K(V)$.  Then the counting sequence of $W = U\times V$
is $K(U\times V) = K(U) \cdot K(V)$ and the magnum of the
product is $m(U\times V) = m(U)\cdot m(V)$.
\end{theorem}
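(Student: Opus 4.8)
The plan is to mirror the two-step pattern of Theorem~\ref{th:SqCup}: first prove the pointwise identity $K_{U\times V}=K_U\cdot K_V$ between the counting sequences, then transfer it to the extensions via the Axiom of Extension and read off the magnum. For the counting-sequence identity, write the finite components of the fenestrations as $U=\biguplus_k U_k$ and $V=\biguplus_\ell V_\ell$. By the definition of the square ordering, the $n$-th window of $W=U\times V$ is $W_n=\biguplus_{\max\{k,\ell\}=n}U_k\times V_\ell$, so the first $n$ windows assemble into a rectangle,
\[
\biguplus_{j=1}^{n}W_j \;=\; \biguplus_{\max\{k,\ell\}\le n}U_k\times V_\ell \;=\;\Bigl(\biguplus_{k\le n}U_k\Bigr)\times\Bigl(\biguplus_{\ell\le n}V_\ell\Bigr).
\]
Taking cardinalities and using $\bigl|\biguplus_{k\le n}U_k\bigr|=\sum_{k\le n}X_U(k)=K_U(n)$ (and likewise for $V$) gives $K_W(n)=\sum_{j\le n}X_W(j)=\bigl|\biguplus_{j\le n}W_j\bigr|=K_U(n)\cdot K_V(n)$ for every $n\in\N$, which is exactly $K_{U\times V}=K_U\cdot K_V$ in the componentwise sense.

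To pass to magnums, the multiplicativity clause of the Axiom of Extension, $\widehat{(f\cdot g)}=\widehat f\cdot\widehat g$, yields $\widehat{K_{U\times V}}=\widehat{K_U}\cdot\widehat{K_V}$ on $\Nn$. Since $U$ and $V$ live in the same reference set $R$ (with magnum $\theta$ and fenestration $\Lambda$), their magnums are obtained by evaluating $\widehat{K_U}$ and $\widehat{K_V}$ at the single argument $\nu_0=\Lambdahatinv(\theta)$, and the square-ordering windows of $W$ carry the same index $n$, so $m(U\times V)=\widehat{K_{U\times V}}(\nu_0)$. Evaluating the product identity at $\nu_0$ then gives $m(U\times V)=\widehat{K_U}(\nu_0)\cdot\widehat{K_V}(\nu_0)=m(U)\cdot m(V)$; for $U,V\subseteq\N$ with singleton windows this is simply evaluation at $\omega$, giving $m(\N\times\N)=\omega^2$ and the analogous formulas for other products.

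I expect the only real care to be organisational, in two places. The first is making the telescoping $\biguplus_{j\le n}W_j=(\biguplus_{k\le n}U_k)\times(\biguplus_{\ell\le n}V_\ell)$ watertight, which reduces to the observation $\max\{k,\ell\}\le n\iff(k\le n)\land(\ell\le n)$. The second, subtler, is confirming that $m(U)$, $m(V)$ and $m(U\times V)$ are all read off at the \emph{same} surnatural argument $\Lambdahatinv(\theta)$ --- that is, that the square-ordering fenestration of $W$ is precisely the one whose $n$-th window is built from the $n$-th windows of $U$ and of $V$, so the three counting functions are sampled consistently. Once that alignment is pinned down, the conclusion is immediate from the counting-sequence identity and the Axiom of Extension.
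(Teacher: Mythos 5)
Your proposal is correct and follows essentially the same route as the paper: the rectangle identity $\biguplus_{j\le n}W_j=(\biguplus_{k\le n}U_k)\times(\biguplus_{\ell\le n}V_\ell)$ is just the paper's factoring of the double sum over $\max\{k,\ell\}\le n$, giving $K_W(n)=K_U(n)\cdot K_V(n)$, followed by extension to $\Nn$ and evaluation at $\Lambdahatinv(\theta)$. The alignment issue you flag (that $m(U)$, $m(V)$ and $m(U\times V)$ are all sampled at the same surnatural argument, with the product reference set $R\times R$ carrying partition function $\Lambda^2$ and magnum $\theta^2$) is exactly what the paper settles in Remark~(c) following the theorem, so your treatment matches the paper's in substance.
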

\begin{proof}
It is simple to show that
$$
| W_m| = \left| \biguplus_{\max{\{k,\ell\}}=m } U_k\times V_\ell \right| 
       = \sum_{ \max{\{k,\ell\}}=m } |U_k\times V_\ell| 
       = \sum_{ \max{\{k,\ell\}}=m } |U_k|\cdot |V_\ell| .
$$
Then, summing from $1$ to $n$ yields
$$
K_W(n) = \sum_{m\le n} | W_m | 
       = \left| \biguplus_{\max{\{k,\ell\}}\le n } U_k\times V_\ell \right| 
       = \sum_{k,\ell\le n}  | U_k\cdot V_\ell | 
       = \sum_{k\le n} | U_k | \cdot \sum_{\ell\le n} | V_\ell |
       = K_U(n)\cdot K_V(n) .
$$
which, upon extension to $\Nn$, implies $m(U\times V) = m(U)\cdot m(V)$. 
\end{proof}


{\bf Remarks.} 
{\em
\emph{(a)}
In \S\ref{sec:X-and-K} we described how the magnum of a set is \emph{relative}
to that of a given enumerated reference set $R$ of which it is a subset.
For subsets of the natural numbers, the reference set is naturally
taken to be $\N$ itself.  For countable sets not contained in $\N$,
there may not be a canonical choice of reference set,
or its enumeration.  This matter will concern us further in
\S\ref{sec:rationals} below. 

\emph{(b)}
The result in Theorem~\ref{th:SqCup} may be stated 
more formally as $m(U\sqcup V|R\sqcup R)= m(U|R) + m(V|R)$.
The validity of this
statement relies on the fact that if $m(R)=\theta$ and $\mathscr{R}=(R_n)_n$
is a fenestration of $R$ with endpoint set $\Lambda$ such that
$\hat\Lambda(\nu)=\theta$, then $R\sqcup
R=\{r_1\times\{1\},r_1\times\{2\},r_2\times\{1\},r_2\times\{2\},\ldots\}$ has
fenestration $(R\sqcup R)_n= R_n\sqcup R_n$ with partition function
$2\Lambda$, and $2\hat\Lambda(\nu)=2\theta$ is the magnum of the
reference set $R\sqcup R$ for $U\sqcup V$.

\emph{(c)}
Analogously, Theorem~\ref{th:CarPro}
may be expressed as $m(U\times V|R\times R)=m(U|R)m(V|R)$ where the
reference set $R\times R$ is square ordered.    Here, the fenestration
of $R\times R$ is determined by
$(R\times R)_n = \biguplus_{\max\{k,\ell\}=n} R_k\times R_\ell$,
and satisfies $|(R\times R)_n| = (\Lambda(n))^2-(\Lambda(n-1))^2$.
The partition function $\Lambda^2$ satisfies $\hat{\Lambda^2}(\nu)= \theta^2$
and this is the magnum of the reference set $R\times R$. 
The count of elements in the proofs of
Theorems~\ref{th:SqCup} and~\ref{th:CarPro} is now valid with respect
to these reference sets whose magnums have been determined.  In particular, if $U$
and $V$ are subsets of $\N$ then $m_{\N\times\N}(U\times V)=
m(U)m(V)$ and, most fundamentally,
$m_{\N\times\N}(\N\times\N)=\omega^2$.
}

\medskip
Unless otherwise stated, we assume that $\N\times\N$ has square ordering, with
components of its fenestration beginning with
$$
W_1 = \{(1,1)\}, \ \ 
W_2 = \{(1,2),(2,2),(2,1)\}, \ \ 
W_3 = \{(1,3),(2,3),(3,3),(3,2),(3,1)\}, \ \  \dots\ .
$$
The indicator sequence and counting sequence of $\N^2$ are $X(n) = 2n-1$
and $K(n) = 1 + 3 + \dots + (2n-1) = n^2$.  Extending the domain to $\Nn$ we have
$\hat{K}(\nu) = \nu^2$ and the magnum is $m(\N^2) = \hat{K}(\omega) = \omega^2$.
Further results such as $m(\N^k) = \omega^k$ and $m(\N^k\sqcup\N^\ell) = \omega^k+\omega^\ell$
follow in like manner.


\newcommand{\dHat}[1]{\Hat{\Hat{#1}}}

\subsection{Magnum of the Integers}
\label{sec:SizeOfZ}

We calculate the magnum of the set $\Z$ of integers from the counting
sequence using (\ref{eq:mra}) and also directly from the genetic form.
In both cases, $m(\Z) = 2\omega+1$.

\subsubsection*{The Magnum of $\Z$ from the Counting Sequence}

We choose $R = \Z$ as the reference set. Assume $m(R) = \theta$ (to be determined). 
The partition function $\Lambda(n) = 2n-1$ determines a partition of $\N$
(we remark that this is not a fenestration of $\N$).  The inverse map is
$\Lambda^{-1}(n) = (n+1)/2$.  We order $R$ in the following way:
$$
R = (r_k)_k = ( 0 \prec 1 \prec -1 \prec 2 \prec -2 \prec 3 \prec -3 
                  \prec 4 \prec -4 \prec  \dots ).
$$
The set $R$ inherits a partition from that of $\N$, with
$\mathscr{R} = \{R_n\}$ where $R_n = \{n-1,-(n-1)\}$:
$$
\mathscr{R} = ( \{0\}, \{1, -1\}, \{2, -2\}, \{3, -3\}, \{4, -4\}, \dots ).
$$
The condition that $\mathscr{R}$ is a fenestration requires
$\nu = \Lambdahatinv(\theta) \in \Nn$, which implies that $\theta = 2\nu-1$ is odd.
We observe that $K_{\N}(n) = n-1$ and assume that $m(\N|R) = \omega$. Then (\ref{eq:mra}) becomes
$$
\omega = \hat K_{\N}(\Lambdahatinv(\theta)) 
       = \left( \frac{\theta+1}{2} \right) - 1
       = \frac{\theta -1}{2} ,
$$
and, solving for $\theta$, we get $m(\Z) = \theta = 2\omega+1$.

\subsubsection*{The Magnum of $\Z$ from the Genetic Form}

The integers can be partitioned in an obvious way as $\Z = \Z^{-}\uplus\{0\}\uplus\Z^{+}$.
It seems reasonable to expect that $\mathbb{Z}^{-}$ has the same magnum as $\mathbb{Z}^{+}$.
We will prove this by means of the genetic forms for $\Z^{+} = \N$ and $\Z^{-} = -\N$.
\begin{theorem}
\label{th:BooksUp}
If a countable set $A$ is magnumbered on Day~$\alpha$ ($A\in\Nscr_\alpha$), then
$A^{-}=\{-a : a\in A\}$ is also magnumbered on Day~$\alpha$ and $m(A^{-}) = m(A)$.
\end{theorem}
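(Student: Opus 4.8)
The plan is to prove the statement by transfinite induction on the day $\alpha$, exploiting the fact that negation $\phi\colon x\mapsto -x$ is an involutive bijection that preserves everything the genetic construction (\ref{eq:premag}) ever refers to. Concretely, $\phi\circ\phi=\mathrm{id}$; for sets $B,A$ one has $B\subseteq A\iff\phi(B)\subseteq\phi(A)$ and $B\subset A\iff\phi(B)\subset\phi(A)$; $\phi$ commutes with $\uplus$, $\cap$, $\setminus$ and complementation; and $\phi(A)=A^{-}$ sends singletons to singletons. Thus, on any family of sets closed under $\phi$, the map $\phi$ is an order-isomorphism of the inclusion lattice that fixes the cardinality of every finite set.

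I would carry, as the induction hypothesis at stage $\alpha$, the conjunction of two assertions: \textbf{(i)} for every $\beta<\alpha$ the class $\Nscr_\beta$ of sets magnumbered on Day~$\beta$ is closed under $\phi$; and \textbf{(ii)} $m(\phi(A))=m(A)$ for every $A$ magnumbered before Day~$\alpha$. The base cases are immediate: on Day~$0$ only $\emptyset=\phi(\emptyset)$ is magnumbered, and on Day~$n$ exactly the $n$-element candidate sets are magnumbered, which $\phi$ permutes among themselves with magnum $n$ preserved. Granting (i) and (ii) below $\alpha$, the class $\Oscr_\alpha=\biguplus_{\beta<\alpha}\Nscr_\beta$ is closed under $\phi$ and satisfies $m\circ\phi=m$, and the same holds for each $\Oscr_\gamma$ with $\gamma\le\alpha$.

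For the inductive step, let $A\in\Nscr_\alpha$, so by (\ref{eq:magnumdef}) $m(A)=m_\alpha(A)$ is the stable value of the pre-magnum. Evaluating (\ref{eq:premag}) at $A^{-}=\phi(A)$ and re-indexing through $\phi$: the correspondence $B\leftrightarrow\phi(B)$ is a bijection between $\{B\in\Oscr_\gamma:B\subset A^{-}\}$ and $\{B'\in\Oscr_\gamma:B'\subset A\}$ for every $\gamma\le\alpha$, with $m(B)=m(\phi(B))$ by the induction hypothesis, and likewise for the superset families. Hence $m_\gamma(A^{-})=m_\gamma(A)$ throughout $\gamma\le\alpha$, and in particular $m_\alpha(A^{-})=m(A)$. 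Then $A^{-}$ cannot be magnumbered before Day~$\alpha$, for then $A^{-}\in\Oscr_\alpha$ would force $A=\phi(A^{-})\in\Oscr_\alpha$, contradicting $A\in\Nscr_\alpha$; and it cannot be magnumbered after Day~$\alpha$, since once $m_\alpha(A^{-})=m(A)\in\Nn$, any later-magnumbered proper subset $B$ or superset $C$ of $A^{-}$ enters the form only with $m(B)<m(A)<m(C)$ (this inequality is built into the shape of (\ref{eq:premag}), cf.\ (\ref{eq:fullmagform})), so the value is not disturbed. Therefore $A^{-}\in\Nscr_\alpha$ with $m(A^{-})=m(A)$, closing the induction. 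Applying this to $A=\N=\Z^{+}$ gives $m(\Z^{-})=m(-\N)=\omega$, whence, by additivity on $\Z=\Z^{-}\uplus\{0\}\uplus\Z^{+}$ (Theorem~\ref{th:Theorem-M2} in its form for a general countable reference set), $m(\Z)=\omega+1+\omega=2\omega+1$.

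The delicate point, and the one I expect to absorb most of the care, is not the numerical equality $m(A^{-})=m(A)$ — that is forced the moment one recognizes $\phi$ as an inclusion-preserving, cardinality-preserving involution and recalls that the genetic form depends on nothing else — but the claim that $A^{-}$ is born on \emph{exactly} the same day $\alpha$. This is why the closure clause (i) must travel inside the inductive hypothesis rather than be derived afterwards, and why the pre-magnum agreement $m_\gamma(A^{-})=m_\gamma(A)$ has to be checked at every stage $\gamma$, not merely at $\gamma=\alpha$.
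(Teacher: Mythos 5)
Your proposal is correct and follows essentially the same route as the paper: induction on the day $\alpha$ via the genetic form (\ref{eq:premag}), using the fact that reflection in $0$ is an inclusion-preserving involution so the left and right option sets for $A$ and $A^{-}$ carry identical magnums. The only difference is one of explicitness --- the paper attributes the same-birthday claim to the symmetry of the surreal number tree, whereas you derive it directly by carrying closure of $\Nscr_\beta$ under negation and the stage-by-stage agreement $m_\gamma(A^{-})=m_\gamma(A)$ inside the induction hypothesis, which is a welcome tightening rather than a different method.
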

\begin{proof}
The result follows easily by induction on $\alpha$ using the genetic form.
It is a consequence of the symmetry of the number-tree of surreals ---
where all number-pairs $(x, -x)$ are born on the same day ---
that the sets $A$ and $A^{-}$ are magnumbered on the same day.
Thus, the genetic form
$$
m(A) = \{ m(B) : B \in \Oscr_\alpha, B \subset A \ | \ m(C) : C \in \Oscr_\alpha, A \subset C \} \,.
$$
remains valid if all sets are replaced by their reflections in $0$:
$$
m(A^{-}) = \{ m(B^{-}) : B^{-} \in \Oscr_\alpha, B^{-} \subset A^{-} \ | \ m(C^{-}) : C^{-} \in \Oscr_\alpha, A^{-} \subset C^{-} \} \,.
$$
Therefore, $m(A^{-}) = m(A)$.
\end{proof}
We remark that, while magnums are invariant under a reflection about the origin,
they are not in general invariant under reflections about other points or under translations
(except for bounded sets).
\begin{corollary}
\label{cor:ZminusZplus}
(a) The magnums of the positive integers and of the negative integers are equal:
$m(\mathbb{Z}^{-}) = m(\mathbb{Z}^{+}) = \omega$.
(b) The magnum of the set of integers is $m(\mathbb{Z}) = 2\omega + 1$.
\end{corollary}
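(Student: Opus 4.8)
The plan is to read off both parts from Theorem~\ref{th:BooksUp} together with the finite additivity of magnums; no new machinery is needed. For part~(a): by~(\ref{eq:m-omega}) the set $\mathbb{Z}^{+}=\N$ is magnumbered on Day~$\omega$ with $m(\N)=\omega$, so applying Theorem~\ref{th:BooksUp} with $A=\N$ shows that $A^{-}=-\N=\mathbb{Z}^{-}$ is magnumbered on the same day and $m(\mathbb{Z}^{-})=m(\N)=\omega$. Hence $m(\mathbb{Z}^{-})=m(\mathbb{Z}^{+})=\omega$.

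For part~(b) I would use the disjoint decomposition $\mathbb{Z}=\mathbb{Z}^{-}\uplus\{0\}\uplus\mathbb{Z}^{+}$. Working over $\Z$ itself (with the enumeration $0\prec 1\prec -1\prec 2\prec\cdots$) as reference set, the relative form of Theorem~\ref{th:Theorem-M2}, valid for an arbitrary ordered countable reference set (see~\S\ref{sec:RelMagOnR}), gives $m(\Z)=m(\mathbb{Z}^{-}|\Z)+m(\{0\}|\Z)+m(\mathbb{Z}^{+}|\Z)=\omega+1+\omega=2\omega+1$, where $m(\mathbb{Z}^{\pm}|\Z)=\omega$ by part~(a). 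This agrees with the value already obtained from the counting sequence in the previous subsection, so the two routes are consistent.

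The one point needing care is the juggling of reference sets: part~(a) lives purely inside the genetic, day-by-day construction, whereas the additivity step in part~(b) is applied over the reference set $\Z$, which is not a subset of $\N$. In particular one invokes $m(\mathbb{Z}^{+}|\Z)=\omega$, i.e.\ that the magnum of $\N$ placed inside $\Z$ equals its intrinsic magnum $\omega$ --- exactly the identification already used when solving for $\theta$ in the counting-sequence calculation. I expect this consistency check to be the main, and only mild, obstacle; everything else is immediate.
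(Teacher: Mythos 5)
Your proposal is correct and follows essentially the same route as the paper: part (a) is read off from Theorem~\ref{th:BooksUp} applied to $A=\N$, and part (b) from the decomposition $\mathbb{Z}=\mathbb{Z}^{-}\uplus\{0\}\uplus\mathbb{Z}^{+}$ with finite additivity of magnums. Your extra remark about the reference set $\Z$ and the identification $m(\mathbb{Z}^{+}|\Z)=\omega$ is a sensible consistency check that the paper leaves implicit, but it does not change the argument.
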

\begin{proof}
(a) follows immediately from Theorem~\ref{th:BooksUp}.
Result (b) follows from 
$\mathbb{Z} = \mathbb{Z}^{-} \uplus \{0\} \uplus \mathbb{Z}^{+}$
and finite additivity of magnums.
\end{proof}
It is easy to show that $m(2\Z-1)=\omega$ and $m(2\Z)=\omega+1$.
Thus, the set of even integers has one more element than the set of odd ones,
which harmonizes with our intuition.

The idea that the set of negative rational numbers has the same size as
the set of positive ones is intuitively reasonable.
Both \cite{BeNa19} and \cite{Trl24} also conclude that $\Q^{-}$ and $\Q^{+}$
are the same size.
Although we have not yet established the value of $m(\Q)$,
the following corollary is an immediate consequence of Theorem~\ref{th:BooksUp}:
\begin{corollary}
\label{cor:ZminusZplus}
The magnums of the sets of positive and of negative rational numbers are equal:
$m(\mathbb{Q}^{-}) = m(\mathbb{Q}^{+})$.
\end{corollary}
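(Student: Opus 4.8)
The plan is to apply Theorem~\ref{th:BooksUp} directly, taking $A = \mathbb{Q}^{+}$. Since $\mathbb{Q}^{+}$ is a countable set, it lies in the family $\mathscr{M}$ of sets whose magnums are assigned by the genetic construction, so there is some ordinal $\alpha$ with $\mathbb{Q}^{+}\in\Nscr_\alpha$, i.e.\ $\mathbb{Q}^{+}$ is magnumbered on Day~$\alpha$. The only set-theoretic fact needed is the identity $(\mathbb{Q}^{+})^{-} = \{-a : a\in\mathbb{Q}^{+}\} = \mathbb{Q}^{-}$, which is immediate from the definition of the reflection operation $A\mapsto A^{-}$.

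Theorem~\ref{th:BooksUp} then tells us that $\mathbb{Q}^{-} = (\mathbb{Q}^{+})^{-}$ is also magnumbered on Day~$\alpha$ and that $m(\mathbb{Q}^{-}) = m((\mathbb{Q}^{+})^{-}) = m(\mathbb{Q}^{+})$, which is precisely the assertion. Note that the argument does not require us to know the common value $m(\mathbb{Q}^{+})$ (equivalently $m(\mathbb{Q})$), only that it exists: the symmetry of the surreal number-tree exploited in the proof of Theorem~\ref{th:BooksUp} applies verbatim, since for every $B\subset\mathbb{Q}^{+}$ born before Day~$\alpha$ the reflection $B^{-}\subset\mathbb{Q}^{-}$ is born before Day~$\alpha$ with $m(B^{-})=m(B)$, and similarly for supersets $C\supset\mathbb{Q}^{+}$, so the genetic forms of $m(\mathbb{Q}^{+})$ and $m(\mathbb{Q}^{-})$ coincide.

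The nearest thing to an obstacle is merely bookkeeping: one must be sure that the reflection $\mathbb{Q}^{+}\mapsto\mathbb{Q}^{-}$ is compatible with whatever enumeration of the ambient reference set is in force, so that the left and right option-sets of the two genetic forms match term by term. But this compatibility is exactly what the inductive proof of Theorem~\ref{th:BooksUp} already establishes, so no additional work is needed, and the corollary follows at once.
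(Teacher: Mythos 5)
Your proof is correct and matches the paper's own argument: the corollary is obtained exactly by applying Theorem~\ref{th:BooksUp} to $A=\mathbb{Q}^{+}$, using $(\mathbb{Q}^{+})^{-}=\mathbb{Q}^{-}$, with no need to know the common value of $m(\mathbb{Q}^{+})$. Your additional remarks about the reflection symmetry of the genetic form simply restate what the theorem's inductive proof already provides, so nothing further is required.
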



\section{The Set of Rational Numbers}
\label{sec:rationals}


The set of  positive rational numbers may be defined as
$$
\Q^{+} = \left\{\frac{m}{n} : m,n\in\N \land \gcd(m,n) = 1 \right\}.
$$
There is an obvious bijection between the set $\Q^{+}_{<1}$ of positive rationals
less than $1$ and the set $\Q^{+}_{>1}$ of rationals greater than $1$:
the map $f : q \mapsto 1/q$ sends ${m}/{n}$ to ${n}/{m}$ for all $m<n\in\N$.
In Cantorian terms, the two sets are `the same size', but this
clashes with our intuition that $\Q^{+}_{>1}$ must be vastly
greater than $\Q^{+}_{<1}$. Bolzano, in his book 
\emph{Paradoxes of the Infinite} \cite{Bol15}, argued persuasively that
the size of a [bounded] set should be invariant under translation.
In particular, the magnitude of a set of rational numbers in an interval
should depend only on the length of the interval.
To achieve this, we will reorder the set $\Q$ and prove that, for the new ordering,
the magnum is homogeneous; that is, the magnum of the unit rational
interval $Q_k = (k-1,k]_\Q$ is independent of $k$.

\begin{figure}[h]
\begin{center}
\includegraphics[width=0.6\textwidth]{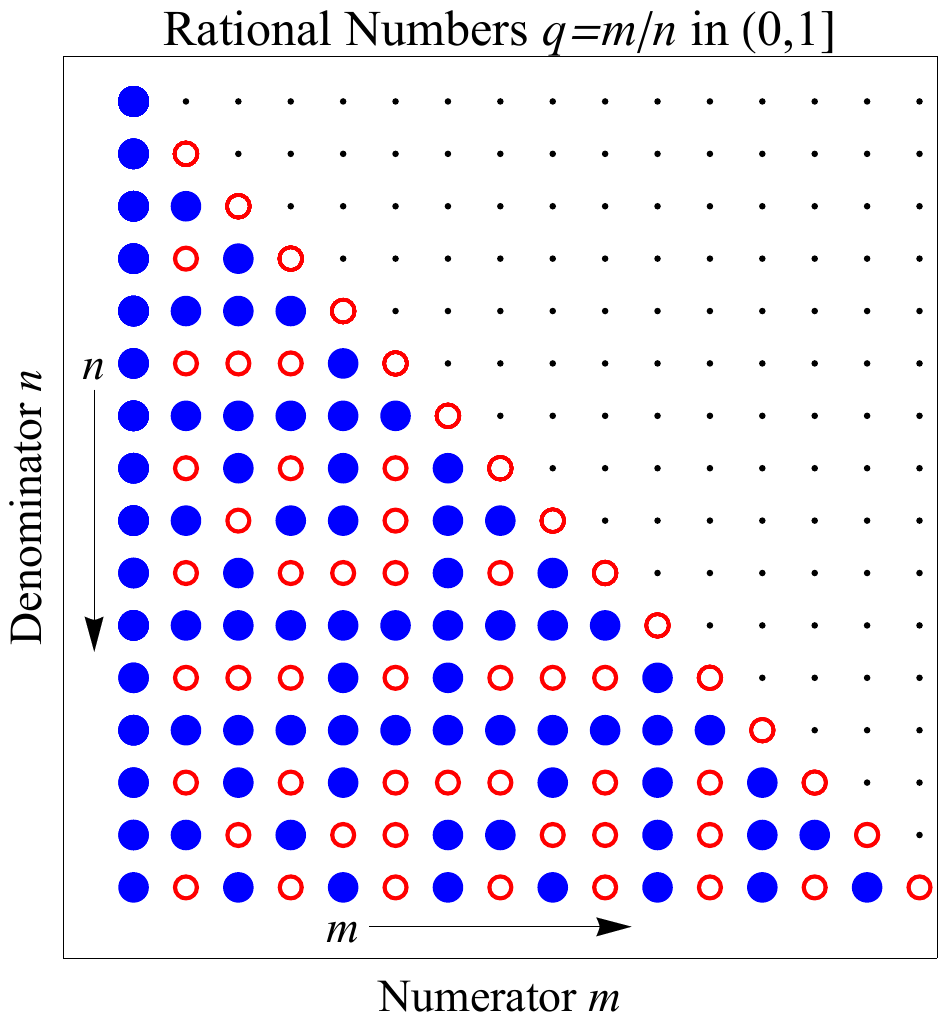}
\caption{The positive rational numbers (small black dots), 
         rational numbers in $Q_1 = (0,1]_\Q$ with duplications (open circles, red online)
         and rational numbers in $Q_1$ in reduced form (large dots, blue online).}
\label{fig:rationals}
\end{center}
\end{figure}

\subsection{The Size of $\Q$: Previous Estimates}

Several earlier studies have considered the size of the set of rational numbers,
obtaining disparate results.  In her review \cite{Wen24}, Wenmackers compared six
methods of measuring set size. We will focus here on two recent publications,
those of Trlifajov\'{a} \cite{Trl24} and of Benci and DiNasso \cite{BeNa19}.
As observed by Wenmackers, the axioms of these theories do not uniquely determine
the relative sizes of every pair of sets in $\mathscr{P}(\N)$.

\subsubsection*{Partitioning $\Q$ According to Trlifajov\'{a}}

A schematic diagram of the positive rational numbers $q=m/n$
is presented in Fig.~\ref{fig:rationals}. The numerator $m$ increases
towards the right and the denominator $n$ downwards.
Large dots (blue online) show the rational numbers in $Q_1 = (0,1]_\Q$ in reduced form.
The number of rationals in the $n$-th row is $\varphi(n)$, Euler's totient function,
that is, the number of positive integers not greater than and prime to $n$.
The first twelve values of this function are
$$
\varphi(n) =  ( 1, 1, 2, 2, 4, 2, 6, 4, 6, 4,10, 4 )  \,,
$$
as indicated in Fig.~\ref{fig:rationals}.
The count of rationals in the first $n$ rows is given by
$$
K_{Q_1}(n) = \Phi(n) = \sum_{k=1}^n \varphi(k) \,.
$$
$\Phi(n)$ is also the number of positive terms in the $n$-th Farey sequence
\cite[Ch.~3]{HaWr60}, and the first twelve values of this function are
$$
\Phi(n)  =  ( 1, 2, 4, 6,10,12,18,22,28,32,42,46 )   \,.
$$
Basic number theory provides a value for the relative sizes of the
set $Q_1$ of rational numbers in the interval $(0,1]$ and the set 
$\N^2$ of pairs of natural numbers $(m,n)$.  Hardy and Wright
\cite[p.~268]{HaWr60}, showed that, with the canonical (square)
ordering of $\N^2$, the counting sequence of $Q_1$ is
\label{pg:HandW}   
\begin{equation}
K_{Q_1}(n) = \Phi(n) = \frac{3 n^2}{\pi^2} + O(n\log n) \,.
\label{eq:HandW}
\end{equation}
Extending $\Phi$ to $\Nn$ and assuming that the first term continues to dominate,
one may conclude that 
\begin{equation}
m_{\N^2}(Q_1) = K_{Q_1}(\omega) \approx \frac{3}{\pi^2} \omega^2 
\label{eq:mU}
\end{equation}
[$f(\omega) \approx \omega^\alpha$ means $f(\omega) = \omega^\alpha + o(\omega^\alpha)$
where $o(\omega^\alpha) / \omega^\alpha$ is infinitesimal].
To obtain the size of the rational numbers, Trlifajov\'{a} \cite{Trl24}
requires the size of a set to be invariant under translation. 
This implies that the magnum of any rational interval $[a,b]\cap\mathbb{Q}$
depends only upon $|a-b|$.
Trlifajov\'{a} \cite{Trl24} proves that (\ref{eq:mU}) remains valid for
any unit interval, confirming the translational invariance of the magnum.
However, her reasoning leads her to the \emph{not-so-obvious}
conclusion that $m(\mathbb{Q})$ is of order $\omega^3$, which, given
$m(\N^2)=\omega^2$, is inconsistent with the Euclidean Principle.

\subsubsection*{Partitioning $\Q$ According to Benci and DiNasso}

In contrast to \cite{Trl24}, Benci and DiNasso \cite{BeNa19} find that
$\mathfrak{n}_\alpha(\Q^{+}) = \Alpha^2$.%
\footnote{This is analogous to $m(\Q^{+}) = \omega^2$ in the context of the surreals.}
They discuss the inherently arbitrary nature
of numerosity, which is grounded on an underlying ultrafilter. 
Their results are  sensitive to their choice of the family of `qualified sets'.
They also present arguments that this indefiniteness has some advantages
\cite[pp.~292--293]{BeNa19}.

Benci and DiNasso \cite{BeNa19} construct a labelled system with domain $\Q$
for which the size of the set of rationals is $2\Alpha^2+1$.  They also show
that the size of a bounded interval of rational numbers is invariant under translation.
Their principal findings are summerised here:
\begin{itemize}
\item
They define sets $\bbH(n) = \{ \pm i/n : i=0,1,2,\dots,n^2 \}$ that cover the
range $[-n,+n]_\Q$ uniformly, with density $\rho \approx n$
(\emph{i.e.}, $n$ elements in each unit interval).
\item
They define the label of a rational number $q\in\Q$ by $\ell_\Q(q)
= \min\{n! : q\in \bbH(n!)\}$. We observe that 
$\ell_Q(q)$ takes values only in $\N! = \{ n!:n\in\N \}$.
\item
They note the value of the counting function:
$\varphi_X(n!) = |\{q\in X : \ell_\Q(q) \le n! \}|
=\left|\bigcup_{m\le n} (\bbH(m!)\cap X) \right|
=\left|\bbH(n!)\cap X \right| $.
The function $\varphi_X(n!)$ corresponds to our $\kappa_X(n!)$.
\item
They show that $| (k-1,k]_\Q \cap \bbH(m) | = m$ for all $m\ge k$.
This establishes the \emph{homogeneity} of the numerosity on $\Q$.
\item
They show that if $n\ge k$ then $\varphi_{(k-1,k]_\Q}(n!) = n!$,
which leads them to the result $\mathfrak{n}_{\Alpha}((k-1,k]_\Q) = \Alpha$.
All unit rational intervals have numerosity $\Alpha$.
\item
They conclude that $\mathfrak{n}_{\Alpha}(\Q) = 2\Alpha^2 + 1$.
\end{itemize}

\subsection{Square Ordering of the Set $\N\times\N$}

We consider the set $\N^2 = \N\times\N$ defined as
$$
\N\times\N = \{ (m,n) : m\in\N, n\in\N \}.
$$
If we express $(m,n)$ as an unreduced fraction $\tfrac{m}{n}$, 
we must treat $\tfrac{1}{2}$ and $\tfrac{3}{6}$ as distinct elements of $\N\times\N$.
By Theorem~\ref{th:CarPro}, the magnum of $\N\times\N$ is $\theta = \omega^2$.

\begin{figure}[h]
\begin{center}
\includegraphics[width=0.65\textwidth]{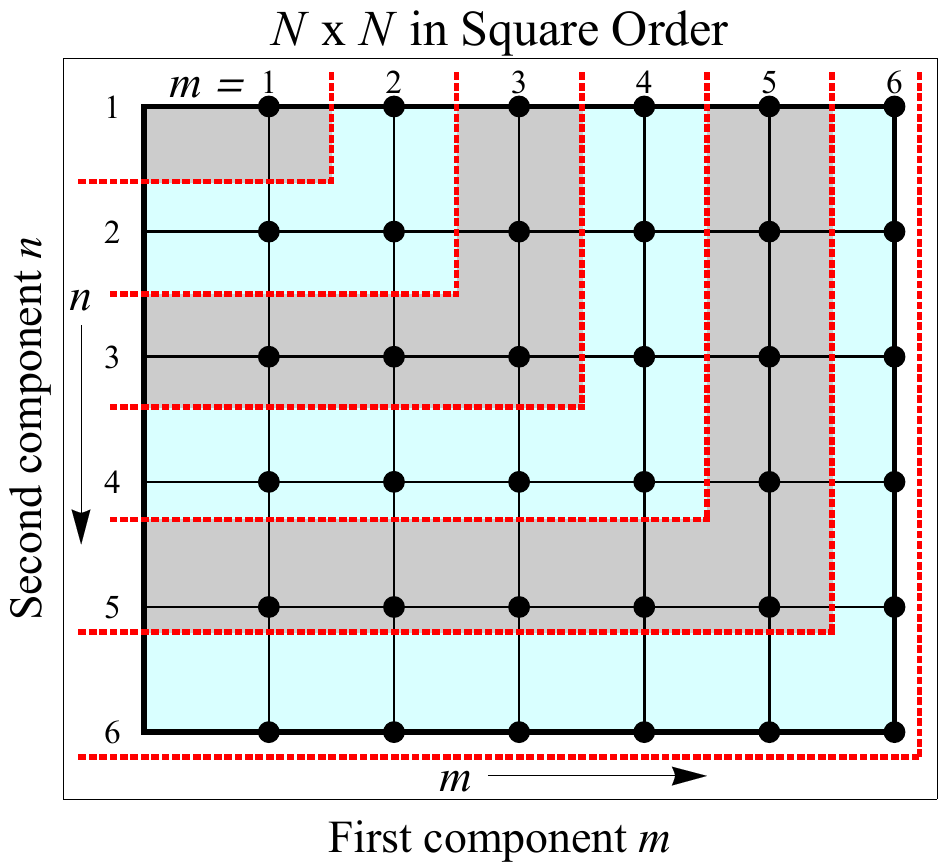}
\caption{Ordering the set $\N^2 = \N\times\N$ with the usual square arrangement.
Points represent ordered pairs $(m,n)$.
Windows (shaded) are separated by dashed lines (red online).}
\label{fig:NxN-Square}
\end{center}
\end{figure}

Fig.~\ref{fig:NxN-Square} illustrates the set $\N\times\N$ with the usual
\emph{square} ordering.  The dashed lines (red online) separate the L-shaped windows
$$
W_k = \{ (m,n) : \max(m,n) = k \}.
$$
Within each window, the elements are ordered by the magnitude of $\tfrac{m}{n}$,
starting with $\tfrac{1}{k}$ at the lower left end and finishing with $\tfrac{k}{1}$
at the upper right end of the window. The length of the window $W_k$ is $2k-1$
and the endpoint is $\Lambda(k)=k^2$.  Clearly, $\Lambda^{-1}(k) = \sqrt{k}$
and $\{\Lambda(k) : k\in\N\}$ is an omega set, so the family
$\mathscr{W} = \{ W_n : n\in\N \}$ is a fenestration.


We examine the partition $\N^2 = \N^2_{<1} \uplus \N^2_{=1} \uplus \N^2_{>1}$, where
\begin{eqnarray*} 
\N^2_{<1} &:=& \{ (m,n)\in\N^2 : m<n \} \\
\N^2_{=1} &:=& \{ (m,n)\in\N^2 : m=n \} \\
\N^2_{>1} &:=& \{ (m,n)\in\N^2 : m>n \} .
\end{eqnarray*} 
Although we have not proven the General Isobary Theorem for sets larger than $\N$,
we can easily show that the magnums $m(\N^2_{<1})$ and $m(\N^2_{>1})$ are equal:
for square ordering of $\N^2$, the counting sequences of the two sets are equal,
$K_{<1}(n) = K_{>1}(n) = n(n-1)/2$, so their magnums are 
$m(\N^2_{<1}) = m(\N^2_{>1}) = \omega(\omega-1)/2$.
Clearly, $m(\N^2_{=1}) = \omega$ so the magnums of the three disjoint sets add to $\omega^2$.


\subsubsection*{Example 2: Magnum of $\half\N$ relative to $\N^2$}

We consider the set $\half\N$ of positive half-integers. 
We note that $\half\N \subset \Q \subset \N^2$.
Since $\half\N = (\N-\half)\uplus\N$, Theorem~\ref{th:SqCup} implies that
$m(\half\N) = m(\N-\half) + m(\N)$. We seek the magnum of $\half\N$ relative
to $\N^2$ with its canonical square ordering.

The counting function of $\half\N$ is $K_{\N/2}(n) = \frac{3}{2}n$ (for $n>1$).
The fenestration of $\N^2$ is $\Lambda(n)=n^2$ so $\Lambda^{-1}(n)=\sqrt{n}$.
Since $\theta = m(\N^2) = \omega^2$, we have
$\Lambdahatinv(\omega^2)=\omega$, so (\ref{eq:mra}) implies
$m_{\N^2}(\half\N) = \frac{3}{2}\omega$.
We also note that $m_{\N^2}(\N-\half) = \frac{1}{2}\omega$.
Clearly, these values are not in harmony with our expectations.
The way out of this dilemma is to reorder the rational numbers.


\subsection{The Magnum of the Set $\Q$ of Rational Numbers}

Every positive rational number $q$ may be expressed in a unique way as an
irreducible fraction $q = {m}/{n}$, where $m$ and $n$ are coprime natural numbers
\cite{DePf17}.  We can thus represent $q$ uniquely as an ordered pair $(m,n)\in\N\times\N$,
which may be regarded as the canonical form of $q$. In this way, we may regard
$\Q^{+}$ as a proper subset of $\N\times\N$ and, in order to respect
the Euclidean Principle, we must ensure that $m(\Q^{+}) < m(\N\times\N) = \omega^2$.
Identifying $\N$ with $\N\times\{1\} = \{(n,1):n\in\N\}$, we can write $\N\subset\Q^{+}\subset\N^2$.

With square ordering of $\N^2$, the set $\Q^{+}_{<1}$ of positive rational numbers
less than $1$ has counting sequence $K_{\Q^{+}_{<1}}(n) = \Phi(n) = \sum_{k=1}^n \varphi(n)$
(see equation (\ref{eq:mU}) above).
The asymptotic result of Hardy and Wright \cite[p.~\pageref{pg:HandW}]{HaWr60}
is that $\Phi(n) \approx \chi n^2$, where $\chi = {3}/{\pi^2}$.
With square ordering of $\N^2$, the counting sequences of $\Q^{+}_{<1}$ and $\Q^{+}_{>1}$
are identical.
As a result, the density of $\Q^{+}$ relative to $\N^2$ is $2\chi$.
We may now appeal to the Density Theorem (Theorem~\ref{th:SurrDen})
to conclude that the magnum of the set of positive rational numbers is 
\begin{equation}
m_{\N^2}(\Q^{+}) = 2\chi\omega^2 + o(\omega^2)
\label{eq:QplusMag}
\end{equation}
Defining $\vartheta = m(\Q^{+})$, recalling that
$\Q = \Q^{-}\uplus\{0\}\uplus\Q^{+}$ and following an argument 
similar to that presented in \S\ref{sec:SizeOfZ} for the set $\Z$,
we may conclude that 
\begin{equation}
m(\Q) =  2 \vartheta + 1 .
\label{eq:QMag}
\end{equation}

\subsubsection*{Example 3: Parity Classes of $\Q^{+}$}

\newcommand{\QEplus}{\Q_\mathrm{E}^{+}}
\newcommand{\QOplus}{\Q_\mathrm{O}^{+}}
\newcommand{\QNplus}{\Q_\mathrm{N}^{+}}

The rationals may be partitioned into three parity classes,
\emph{even}, \emph{odd} and \emph{none}. As shown
in \cite{LyMa24}, these three classes have equal natural densities
under both Calkin--Wilf and Stern--Brocot orderings.
From the Density Theorem, it follows that 
$$
m_\Q(\QEplus) \approx m_\Q(\QOplus) \approx m_\Q(\QNplus)
              \approx \frac{\vartheta}{3}
$$
where $\vartheta$ is the magnum of $\Q^{+}$.


\newcommand{\QB}{\Q_\mathrm{B}}

\subsubsection*{Banded Ordering of the Set $\Q$}

The relationship $m(\Q_{<1}) = m(\Q_{>1})$, which follows from the 
symmetry of the counting function under the mapping $q\to 1/q$, is inconsistent with
Bolzano's criterion: 
the idea that the bounded set $\Q_{<1}$ is equal in size to the unbounded one
$\Q_{>1}$ is repugnant to our intuition. This problem was discussed
in some detail by Trlifajov\'{a} \cite{Trl24}.
Since $\N \subset \Q^{+} \subset \N^2$, we must have $\omega < m(\Q^{+}) < \omega^2$.%
\footnote{Both Trlifajov\'{a} \cite{Trl24} and Benci and DiNasso \cite{BeNa19}
obtain values for the size of $\Q^{+}$ in conflict with these limits, the former
finding a value of order $\Alpha^3$ and the latter a value equal to $\Alpha^2$.}

With the ordering of $\Q$ inherited from $\N^2$, the magnum function is not homogeneous:
the two sets $\Q^{+}_{>1}$ and $\Q^{+}_{<1}$ have equal magnums and 
$m(Q_k)$ depends upon $k$.%
\footnote{We can easily show that
$m(Q_k) = \left(\tfrac{1}{k-1}-\tfrac{1}{k}\right)m(Q_1)$
for $k>1$. Thus, $m(Q_1) = \biguplus_{k>1} m(Q_k)$.}
To ensure homogeneity, with bounded intervals of equal length having equal magnums,
we are led to introduce a reordering of $\Q$ such that the magnum of the
unit rational interval $Q_k = (k-1, k]_{\Q}$ is independent of $k$.

We now define a new ordering of $\Q$, denoting the reordered set as $\QB$.
We recall from Theorem~\ref{th:R-and-A} that reordering does not change the
value of the magnum, so that (\ref{eq:QplusMag}) and (\ref{eq:QMag}) remain valid.

\begin{figure}[h]
\begin{center}
\includegraphics[width=0.75\textwidth]{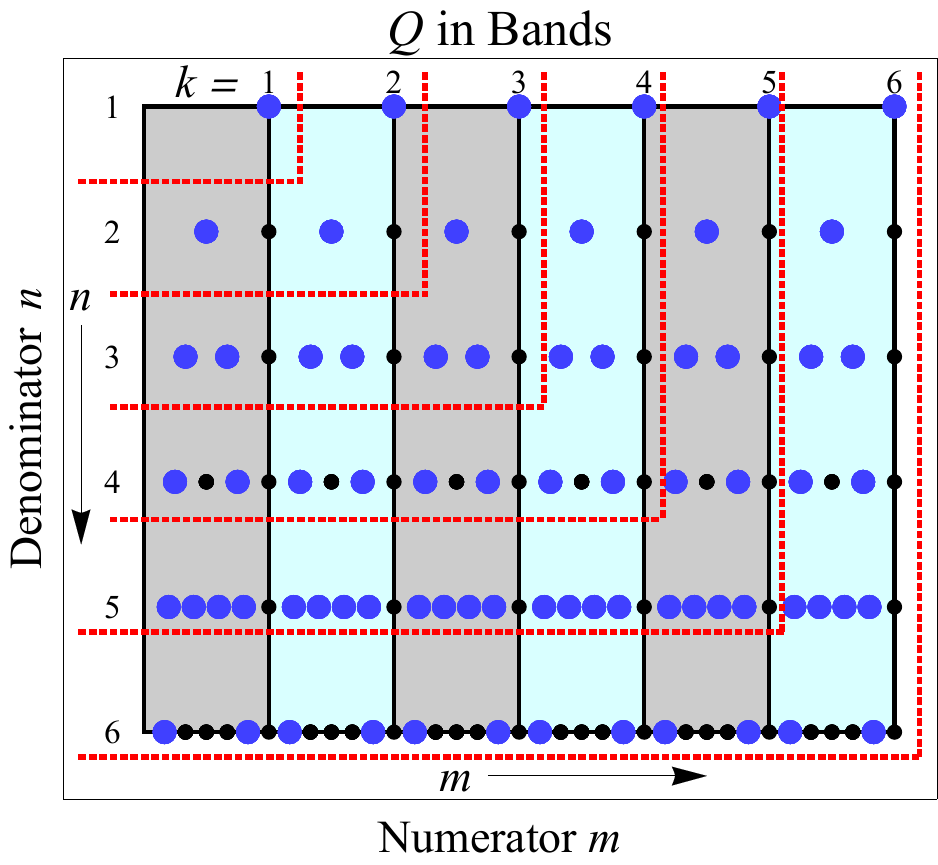}
\caption{Arranging $\Q$ in vertical bands of unit width.
Large points (blue online) represent rationals (with $\text{gcd}(m,n)=1$),
small points (black) are ordered pairs with $\text{gcd}(m,n)>1$, and $k$ is
the integer part of $m/n$.  Windows are separated by dashed (red) lines.
The numbers in $Q_k$ are in the $k$-th vertical band.}
\label{fig:Q-Bands}
\end{center}
\end{figure}

We begin by defining two families of subsets of $\N^2$:
$$
F(k,\ell) =  \left\{(m,\ell) : m = 1, 2, \dots, k\ell \right\}
\qquad\text{and}\qquad G(n) = \biguplus_{\ell=1}^n F(n,\ell) .
$$
Clearly, $|F(k,\ell)| = k\ell$ and $F(k_1,\ell)\subset F(k_2,\ell)$ if $k_1<k_2$.
Also, $|G(n)| = (n^3+n^2)/2$.


Confining attention to the rational numbers, we define
$$
H(n) = G(n)\cap \Q .
$$
It is clear that $H(n_1)\subset H(n_2)$ if $n_1<n_2$ and $|H(n)| = n\Phi(n) \approx \chi n^3$.
We order the $H$-sets according to the index $n$.
Within each set $H(n)$, we order the elements $(k,\ell)$ according to
increasing $\ell$ and, for each $\ell$, according to increasing $k$.
Thus, the ordering of the set $\QB$ begins as follows:
$$
\QB = \{
\textstyle{ \frac{1}{1} \ |\  
\frac{2}{1};\ 
\frac{1}{2},              \frac{3}{2}              \ |\ 
\frac{3}{1}, \frac{5}{2}            ;\ 
\frac{1}{3}, \frac{2}{3},             
  \frac{4}{3}, \frac{5}{3},              
  \frac{7}{3}, \frac{8}{3}              \ |\ 
\frac{4}{1},
\frac{7}{2}, \frac{10}{3}, \frac{11}{3}     ;\
\frac{1}{4}, \frac{3}{4}, \frac{5}{4}, \frac{7}{4}, \frac{9}{4},
\frac{11}{4}, \frac{13}{4}, \frac{15}{4}        \ |\
\frac{5}{1}, \dots }
\}
$$
We introduce a family $\mathscr{W}$ of windows: $W_1 = \{(1,1)\}$ and  $W_n = H(n)\setminus H(n-1)$ for $n>1$.
Then $|W_n| = n\varphi(n)$ and $\Lambda(n) = n\Phi(n)$.

We partition $\Q$ as a disjoint union of \emph{bands} of unit width,
$\Q = \biguplus_{k\in\N} Q_k$, where the $k$-th band is defined as
$$
Q_k = \{ (m,n) \in\Q^{+} : ( (k-1) < \tfrac{m}{n} \le k ) \} .
$$
The sets $Q_k$ are depicted in Fig.~\ref{fig:Q-Bands} as the vertical bands of unit width.
The extents of the sets $H(n)$ are indicated in the figure by the dashed lines (red online).
It may be seen that, when $n \ge \max(k_1,k_2)$, both $Q_{k_1}\cap H(n)$ and $Q_{k_2}\cap H(n)$
have $\Phi(n)$ elements. Therefore, the counting sequences 
$K_{Q_{k_1}}(n)$ and $K_{Q_{k_2}}(n)$ are equal. Indeed, we have the stronger result
$K_{Q_{k_1}}(n) \Eeq K_{Q_{k_2}}(n)$, from which it follows that
$m(Q_{k_1}) = m(Q_{k_2})$. Therefore, the magnums of \emph{all the vertical bands} $Q_k$ 
are equal: the magnum is homogeneous under the ordering $\QB$ and $m(Q_{k})$ is
independent of $k$.  We will now determine $m(Q_{1})$.



The $n$-th window of $\mathscr{W}$ is $H(n)\setminus H(n-1)$
and the union of the first $n$ windows is $H(n)$.
The set $H(n)$ contains $\Lambda(n)=n\Phi(n)$ elements.
Recall from (\ref{eq:QplusMag}) that $m_{\N^2}(\Q^{+}) \approx 2\chi\omega^2$.
If $\mathscr{W}$ were a fenestration of $\Q$, there would exist $\nu\in\Nn$
such that $\hat{\Lambda}(\nu) = m_{\N^2}(\Q)$, that is, $\nu\Phi(\nu) = 2\chi\omega^2$. 
From Hardy \&\ Wright \cite{HaWr60}, we have $\Phi(n) \approx \chi n^2$.
We may assume that this approximation extends to $\Nn$, so that 
$\nu\hat{\Phi}(\nu) \approx \chi\nu^3$ and
$\nu = \Lambdahatinv(2\chi\omega^2) \approx 2^{1/3}\omega^{2/3}$.

The counting sequence of $Q_1$ is $K_{Q_1}(n) = \Phi(n)$.
Using the approximations for $\Phi$ and for $\nu$, we get
\begin{equation}
m_{\QB}(Q_1) = \hat{K_{Q_1}}(\nu) \approx \hat\Phi(2^{1/3}\omega^{2/3}) = 2^{2/3}\chi\,\omega^{4/3} .
\label{eq:MagQ1}
\end{equation}
Thus, the magnum of each band $Q_k$ is of order $O(\omega^{4/3})$,
sitting comfortably between the magnums of $\N$ and $\N^2$.


\subsubsection*{Magnum of $\N$ Relative to $\Q$ and to $\QB$}


We consider the magnum of $\N$ relative to the rational numbers with the
two distinct orderings, square and banded. With square ordering, there is
a single natural number in each window, so $K_{\N}(n) = n$.
The fenestration has $\Lambda(n) = n^2$, so $\Lambda^{-1}(n) = \sqrt{n}$.
Thus
$$
m_{\Q}(\N) = \widehat{K_{\N}}(\Lambdahatinv(\omega^2)) = \omega .
$$
With banded ordering $\QB$, the relevant partition of $\Q$ has endpoints
$\Lambda(n) = n\Phi(n)$.
The counting function of $\N$ is $K_{\N}(n) = n$. 
The dashed (red) lines in Fig.~\ref{fig:Q-Bands} separate the windows $W_k$.
Since $\{\Lambda(k) : k\in\N\}$ is \emph{not} an omega set, the family
$\mathscr{W} = \{ W_n : n\in\N \}$ is not a fenestration, so we cannot use
the General Isobary Theorem. We compute the magnum of $\N$ using the approximate
solution $\nu \approx 2^{1/3}\omega^{2/3}$ obtained above:
$$
m_{\QB}(\N) = \hat{K_{\N}}(\Lambdahatinv(2\chi\omega^2)) \approx 2^{1/3}\omega^{2/3}.
$$
Thus, relative to $\QB$, the set of natural numbers has magnum of order $O(\omega^{2/3})$.
While this result may appear surprising, it is consistent with
the dependence of the magnum on the choice of reference set.



\section{Application of Theorems to Selected Sets}
\label{sec:Applics}

Using Theorem~\ref{th:16.24} together with equations (\ref{eq:MagDef}) and (\ref{eq:mra}),
we can determine the magnums of a wide range of sets of real numbers.
Let $A = r\N = \{rn:n\in\N\}$ where $r\in\R$ and $r\not=0$. The 
defining sequence is $(a_n) = rn$ so that $a^{-1}_n = n/r$ and 
$\kappa_A(n) = \lfloor n/r \rfloor$. Therefore,
$$
m(r\N) = \hat{\kappa_A}(\omega) = \frac{\omega}{r} .
$$
In a similar manner we show that if $A = \N+r = \{n+r:n\in\N\}$, then 
$$
m(\N+r) = \lfloor \omega - r \rfloor = \omega + \lfloor - r \rfloor .
$$
Some cases of particular interest include
$m(\N-\half) = \omega$, $m(\N+\half) = \omega-1$ and $m(\half\N) = 2\omega$.
 

Magnums for many other sets can be found. For example, consider the general
arithmetic sequence $A = \{\alpha(n):n\in\mathbb{N}\}$ where $\alpha(n) = kn+\ell$
for arbitrary $k\in\mathbb{N}$ and $\ell > -k$.  The inverse function
is $\alpha^{-1}(n) = (n-\ell)/k$ so that 
$\kappa_A(n) = \lfloor \alpha^{-1}(n) \rfloor$ extends to
\begin{equation}
\widehat{\kappa_A}(\omega) =  \biggl\lfloor \frac{\omega}{k} - \frac{\ell}{k} \biggr\rfloor
                                =  \frac{\omega}{k} + O(1) \,.
\label{eq:arithseq}
\end{equation}
The even numbers arise from $k=2$ and $\ell=0$, and (\ref{eq:arithseq})
yields $m(2\mathbb{N}) = \omega/2$. The odd numbers arise from 
$k=2$ and $\ell=-1$, and (\ref{eq:arithseq}) yields $m(2\mathbb{N}-1)=\omega/2$.
The odd numbers greater than $1$ arise from $k=2$ and $\ell=+1$, and
(\ref{eq:arithseq}) yields $m(2\mathbb{N}+1)=\omega/2-1$.%
\footnote{Note that the floor function and negation do not commute: for example, 
$ - \lfloor \frac{1}{2} \rfloor = 0$ whereas $\lfloor - \frac{1}{2} \rfloor = -1$.}

Now consider the general geometric sequence
$A = \{a\,r^{n-1}\}$ for arbitrary $a$ and $r$ in $\mathbb{N}$. 
The inverse of $\alpha(n)$ is $\alpha^{-1}(n) = \log(rn/a)/\log r = \log_{r}(rn/a)$,
so that $\kappa_A(n) = \lfloor \alpha^{-1}(n) \rfloor$ extends to
\begin{equation}
\widehat{\kappa_A}(\omega) =  \lfloor {\log_{r}({r\omega}/{a})} \rfloor 
                 =  \lfloor {\log_{r}\omega} \rfloor + O(1) \,.
\label{eq:geomseq}
\end{equation}
Thus, if $a=1$ and $r=2$ we have $A=\{1,2,4,8, \dots \}$ and $m(A)=\log_{2} 2\omega = \log_{2}\omega + 1$.

Magnums can be assigned to combinations of sets for which the definition in terms
of natural density fails. For example, using Theorem~\ref{th:Theorem-M2},
the set $A = 3\mathbb{N}\cup 4\mathbb{N}$ shown in Fig.~\ref{fig:AlphaKappa} has magnum 
$$
m(3\mathbb{N}\cup 4\mathbb{N}) = m(3\mathbb{N}) + m(4\mathbb{N}) - m(12\mathbb{N}) =
\frac{\omega}{3}+\frac{\omega}{4}-\frac{\omega}{12} = \frac{\omega}{2} \,.
$$
This also follows from the observation that
$3\mathbb{N}\cup 4\mathbb{N} \isobar 2\mathbb{N}$ with windows of length $12$.

For the set of perfect squares, $\N^{(2)} := \{ a(n)=n^2 : n\in\mathbb{N}\}$,
we consider the fenestration $\mathscr{W}$ determined by $\Lambda(n) = n^2$
(noting that $\Lambda\in\BOM$).
Clearly, there is one element in each window.
Since ${a}^{-1}(n) = \sqrt{n}$, we find that
$m(\N^{(2)}) = \lfloor \hat{a^{-1}}(\omega) \rfloor = \sqrt{\omega}$.

Now consider $B := \{(n^2-n+1) : n\in\mathbb{N}\}$.
Since there is one element in each window, $B$ is isobaric to $\N^{(2)}$ under $\mathscr{W}$
and so $m(B) = \sqrt{\omega}$.
More generally, for $\Lambda(n) = n^k$, $\Lambdahatinv(\omega) = \sqrt[k]{\omega} \in \Nn$.

The set $A = \mathbb{N}^{(2)}\cup \mathbb{N}^{(3)}$, containing all squares and cubes,
has magnum 
$$
m(\mathbb{N}^{(2)}\cup \mathbb{N}^{(3)}) =
m(\mathbb{N}^{(2)})+m(\mathbb{N}^{(3)})-m(\mathbb{N}^{(6)}) =
\sqrt[2]{\omega}+\sqrt[3]{\omega}-\sqrt[6]{\omega} \,.
$$


Combinations are possible, \emph{e.g.}, suppose
$A = \{ \alpha n^2 + \beta n + \gamma : n\in\N \}$ with $\alpha, \beta, \gamma$ in $\mathbb{N}$.
Then
$$
\hat{a}^{-1}(\omega) =
\sqrt{\frac{1}{\alpha}\left({\omega}+\frac{\Delta}{4\alpha^2}\right)} - \frac{\beta}{2\alpha}
$$
where $\Delta = \beta^2-4\alpha\gamma$. If $\Delta = 0$ and $2\alpha\, |\, \beta$,
this is in $\Nn$.  So, for example, if $\beta = 2k\alpha$, then
$$
a(n) = \alpha (n+k)^2 \qquad\text{and}\qquad
\hat{a}^{-1}(\omega) = \left(\sqrt{\frac{\omega}{\alpha}} - k\right) \in \Nn \,.
$$
For the prime numbers $\bbP = \{p_n:n\in\N\}$, the counting sequence is given by
the prime counting function, $\kappa_{\bbP}(n) = \pi(n)$. The Prime Number Theorem
assures us that $\pi(n) \approx n/\log n$. Thus, using Theorem~\ref{th:DensityTheorem},
the Density Theorem, we can estimate the magnum of the primes to be
$m(\bbP) \approx \omega/\log\omega$.

\begin{center}
\begin{table}[h]
\caption{Magnums of selected subsets of $\mathbb{N}$
         computed from $m(A)=\hat{\kappa_A}(\omega)$.}
\label{tab:SampleMagnums}
\begin{tabular}{ c   c   c   c   c }   
\hline
 Set                &  Definition    &  Inverse      &   Counter                     &   Magnum      \\
 $A$                &  $a(n)$        & $a^{-1}(n)$   &  $\kappa(n)$                  &   $m(A)$      \\
\hline
$\mathbb{N}$        &     $n$        &    $n$        &   $\lfloor n \rfloor = n$     &  $\omega$     \\
$\mathbb{N}\setminus\{1\}$  &     $n+1$    &    $n-1$    &   $\lfloor n-1 \rfloor$  &  $\omega-1$     \\
$2\mathbb{N}$       &     $2n$       &    $n/2$      &   $\lfloor n/2 \rfloor$       &  $\omega/2$   \\
$2\mathbb{N}-1$     &     $2n-1$     & $(n+1)/2$     & $\lfloor (n+1)/2 \rfloor$     &  $\omega/2$   \\
$2\mathbb{N}+1$     &     $2n+1$     & $(n-1)/2$     & $\lfloor (n-1)/2 \rfloor$     &  $\omega/2-1$ \\
$k\mathbb{N}$       &     $kn$       & $n/k$         & $\lfloor n/k \rfloor$         &  $\omega/k$   \\
$k\mathbb{N}+m$     &     $kn+m$     & $(n-m)/k$     & $\lfloor (n-m)/k \rfloor$     &  $\omega/k+\lfloor -m/k\rfloor$   \\
Perfect Squares     &     $n^2$      & $\sqrt{n}$    & $\lfloor \sqrt{n} \rfloor$    &  $\sqrt{\omega}$   \\
Triangular Numbers  &  $(n^2+n)/2$    & $\sqrt{2n+\frac{1}{4}}-\frac{1}{2}$    & $\left\lfloor \sqrt{2n+\frac{1}{4}}-\frac{1}{2} \right\rfloor$    &  $\sqrt{2\omega}-1$   \\
$k$-th Powers       &     $n^k$      & $\sqrt[k]{n}$ & $\lfloor \sqrt[k]{n} \rfloor$ &  $\sqrt[k]{\omega}$\\
Geometric Series  &  $k^n$    & $\log_k n$    & $\left\lfloor \log_k n \right\rfloor$    &  $\lfloor \log_k \omega \rfloor$   \\
Fibonacci Numbers &  $\lfloor\varphi^n/\sqrt{5}\rceil$    & $\lfloor\log_\varphi(\sqrt{5}n)\rceil$    & $\left\lfloor \log_\varphi(\sqrt{5}n) \right\rfloor$    &  $\lfloor \log_\varphi(\sqrt{5}\omega) \rfloor$   \\
Prime Numbers       &   $p_n$    &   $\pi(n)$    &   $\pi(n)$   & $\approx\lfloor \omega/\log\omega \rfloor$  \\
\hline
\end{tabular}   
\end{table}
\end{center}


Many more examples could be presented. In Table~\ref{tab:SampleMagnums} 
we list the magnums for a range of subsets of $\mathbb{N}$.


We have obtained the magnums of many subsets of $\mathbb{N}$, but
it must be noted that there are many more sets for which we are unable
to calculate the magnums.  As an example, consider the set
$\mathbf{Od}_{2}$, whose elements are all natural numbers having an
odd number of binary digits.  The natural density of this set
oscillates between values that asymptote to $\frac{1}{3}$ and
$\frac{2}{3}$, never tending to a limit (see Fig.~\ref{fig:BinDigs}).
Moreover, the distance
between one extreme and the next increases by a factor of $2$ for
each stage. It seems possible that some additional assumption or
axiom is required to determine the magnum of $\mathbf{Od}_{2}$ uniquely.
%
\begin{figure}[h]
\begin{center}
\includegraphics[width=0.75\linewidth]{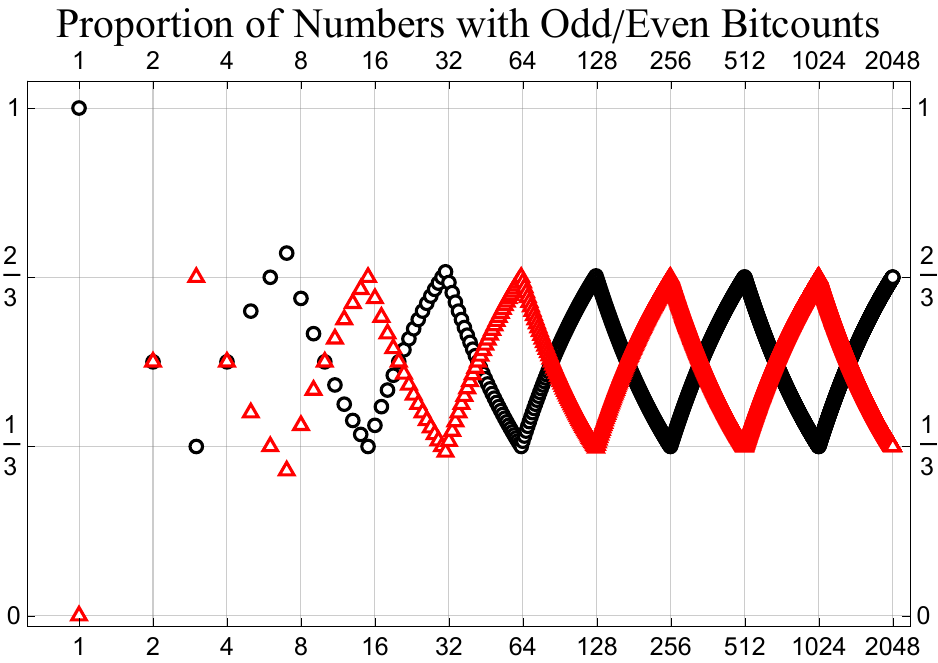}
\caption{Proportion of numbers less than $n$ that have
an odd count of binary digits (open circles) and 
an even digit count (triangles, red online).
Note that the $x$-axis is logarithmic.}
\label{fig:BinDigs}
\end{center}
\end{figure}


\section{Conclusions and Prospects}
\label{sec:Conclusions}

We have defined a function --- the magnum --- that enables us to quantify
the sizes of a wide range countable sets in terms of surnatural numbers.
The magnum of a set $A$ may be constructed genetically or defined by extending
the domain of the counting function. Magnums respect the Euclidean Principle,
so that a proper subset of a set is smaller than that of the set itself.

We introduced the concept of isobary and proved that isobarically equivalent
sets have equal magnums. The magnums of some sets of natural numbers
were obtained and, in addition, a range of larger sets. With the canonical square
ordering of $\N^2$, and using a basic result of number theory,
the leading term of the magnum of the rational numbers was shown to be
$m(\Q^{+}) = (6/\pi^2)\omega^2$.
Crucially, this value is independent of the ordering of $\Q$.
With banded ordering of $\Q$, the set $Q_k$ of rational numbers in the interval
$(k-1,k]$ was shown to be $m_{\Q_{B}}(Q_k) = O(\omega^{4/3})$ and independent of $k$.


A fundamental feature of the magnum $m(A)$ of a set $A$ is its dependence
on the ordering of the set $R$ chosen as reference set. In this sense,
all magnums are `relative'.  This is perhaps not surprising when we recall
that an infinite set may have more than one ordinal assigned to it.

Several important issues remain to be clarified and there is ample scope for 
extension of the theory of magnums.  This theory provides many opportunities
for development and application: counting with surnatural numbers
is a rich source of interesting open problems. We conclude with a list of
some important outstanding questions:
%
%
\begin{enumerate}
\item The Axiom of Extension (pg.~\pageref{ax:exten}) posits that every nondecreasing
function $f:\N\to\N_0$ can be extended to $\hat f:\Nn\to\Nn$ while conserving specific properties.
Can this axiom be derived from a more fundamental assumption that still guarantees that
$\mathscr{M}$, the family of sets for which magnums can be defined, contains every countable set? 
\item From the structure of the surreal number tree, there is symmetry under
reflection about zero, from which it follows that $m(-A) = m(A)$.
Are there any other symmetries?
\item Can the family $\BOM$, which forms a free ultrafilter, be used in a manner
similar to the approach employed in non-standard analysis?
\end{enumerate}

\section*{Acknowledgements}


\begin{small}
We are grateful to
Prof Philip Ehrlich, Ohio University, 
Dr Vincent Bagayoko, Universit\'{e} Paris Cit\'{e}
and
Dr Vincenzo Mantova, University of Leeds for helpful comments on our paper
\cite{LyMa23} on \textsc{arXiv.org},
which helped us in the preparation of the present paper.

The research of Benci \emph{et al.}~on numerosities described in \cite{BeNa19}
has inspired several strands of investigation in our development of
magnum theory. The paper of Trlifajov\'{a} \cite{Trl24} has also
been of great assistance.
\end{small}



\addcontentsline{toc}{section}
{$\bullet$ \ Appendix 1: Surreal Numbers and Surnatural Numbers}
\section*{Appendix 1: Surreal Numbers and Surnatural Numbers}
\label{sec:appendix1}



The class of surreal numbers, denoted by $\No$,
discovered by Conway around 1970, is the largest possible ordered field
(Conway, 2001, referenced as \cite{ONAG}).
The basic arithmetical operations ---
addition, subtraction, multiplication, division and roots ---
are defined in $\No$.

The surreal numbers are constructed inductively. 
We give a brief sketch, without proofs or details, of some of their
key properties.
A full account is given in \cite{ONAG}, an elementary
discussion in \cite{Knu74} and a tutorial in \cite{Sim17}.

Every number $x$ is defined as a pair of sets of numbers,
the left set and the right set:
$$
x = \{ X^{L} \ |\  X^{R} \}
$$
where every element of $X^{L}$ is less than every element of $X^{R}$, and
$x$ is the \emph{simplest} or earliest appearing number between these two sets.
We start with $0$, defined as $\{\varnothing |\varnothing \}$. Then 
$$
\{0\ |\varnothing \} = 1 \qquad \{1\ |\varnothing \} = 2 \qquad \{2\ |\varnothing \} = 3 \qquad \dots
$$
Negative numbers are defined inductively as $-x=\{-X^{R}\ |\ -X^{L}\}$. So
$$
\{\varnothing |\ 0\} = -1 \qquad \{\varnothing | -1\} = -2 \qquad \{\varnothing | -2\} = -3 \qquad \dots
$$
The dyadic fractions (rationals of the form $m/2^n$) appear as
$$
\{0\ |\ 1\} = \textstyle{\frac{1}{2}} \qquad
\{1\ |\ 2\} = \textstyle{\frac{3}{2}} \qquad
\{0\ |\ \half\} = \textstyle{\frac{1}{4}} \qquad
\{\half\ |\ 1\} = \textstyle{\frac{3}{4}} \qquad
\dots
$$
After an infinite number of stages, when all dyadic fractions have emerged,
all the remaining rationals and real numbers appear.
The first infinite and infinitesimal numbers are defined at this stage:
$$
\omega = \{0, 1, 2, 3, \dots\ \ |\varnothing \} \,, \qquad 
1/\omega = \{\ 0 \ |\ 1, \textstyle{\frac{1}{2}, \frac{1}{4}, \frac{1}{8}}, \dots\ \} \,.
$$
We can then introduce
$$
\omega + 1 = \{0, 1, 2, 3, \dots,\ \omega\ |\varnothing \} \qquad
\omega - 1 = \{0, 1, 2, 3, \dots\ \ |\ \ \omega\} 
$$
$$
2\omega = \{0, 1, 2, 3, \dots\ \omega,\omega+1,\omega+2, \dots \ |\varnothing \} \qquad
\half\omega = \{0, 1, 2, 3, \dots\ \ |\ \ \omega,\omega-1,\omega-2, \dots \}
$$
and many other more exotic numbers. They behave beautifully:
$\No$ is a totally ordered field, indeed, the largest such field.

The sum of two surreal numbers $x_1 = \{ X^L_1 \ |\ X^R_1 \}$ and 
$x_2 = \{ X^L_2 \ |\ X^R_2 \}$ is defined to be
$$
x_1+x_2 = \{ x_1 + X^L_2, X^L_1 + x_2 \ |\ x_1 + X^R_2, X^R_1 + x_2 \ \}
$$
where, for example, $x_1 + X^L_2$ is the set of numbers $x_1 + x^L_2$
for all elements $x^L_2$ of $X^L_2$.

The product of two surreal numbers is defined to be
\begin{eqnarray*}
x_1 x_2 &=& \{ x_1^L x_2 + x_1 x_2^L - x_1^L x_2^L, x_1^R x_2 + x_1 x_2^R - x_1^R x_2^R \ |\ \\
    && \ \   x_1^L x_2 + x_1 x_2^R - x_1^L x_2^R, x_1^R x_2 + x_1 x_2^L - x_1^R x_2^L \} \,.
\end{eqnarray*}

\subsubsection*{The Omnific Integers}

Conway (\cite{ONAG}, Ch.~5) defines the class $\Oz$ of \emph{omnific integers}:
$x$ is an omnific integer if $x = \{x-1\ |\ x+1\}$. Omnifics are the appropriate
class of integers in the context of the surreals. They include all the real
integers and all ordinal numbers, and every surreal number $x$ is the quotient
of two omnific integers.

Any surreal number $x$, may be expressed uniquely in \emph{normal form}, 
\begin{equation}
x = \sum_{\beta < \alpha} r_\beta \cdot \omega^{y_\beta} \,,
\label{eq:normalform}
\end{equation}
where $\alpha$ is an ordinal, the numbers $r_\beta$ are real
and the numbers $y_\beta$ form a descending sequence.
The number expressed by (\ref{eq:normalform}) is an omnific integer if and only if
$r_\beta = 0$ for $y_\beta < 0$ and $r_0 \in \mathbb{Z}$ 
\cite[Ch.~5, Th.~1]{ONAG}.

There are no infinitesimals in $\Oz$.
Every surreal number is distant at most $1$ from
some omnific integer, and $\No$ is the fraction field of $\Oz$.
For surreal arguments $s$ we define $\lfloor s \rfloor$ to be the (omnific)
integer part of $s$. If the normal form of $s$ is
$s = \sum_{\beta<\alpha} \omega^{y_\beta}\cdot r_\beta$ then
$ \lfloor s \rfloor = \sum_{0 < \beta<\alpha} \omega^{y_\beta}\cdot r_\beta + 
\lfloor r_0 \rfloor $\,. 

For an illuminating discussion of the omnific integers, see \cite[\S7]{Sim17}.

\subsubsection*{The Surnatural Numbers}

If we confine attention to subsets of $\mathbb{N}$,
we need nothing larger than $\omega$ to define their magnums.
Moreover, we assume that the magnum function maps the
power set of the natural numbers into the positive omnific integers
or \emph{surnatural numbers}, $\Nn = \Oz^{+}\cup\{0\}$:
\begin{equation}
m : \mathscr{P}(\mathbb{N}) \to \Nn \,.
\label{eq:map2surnat}
\end{equation}
This rules out negative and fractional values for $m(A)$, but admits numbers
$r\cdot\omega^y$, where $r$ is an arbitrary positive real number and $y\in(0,1]$.
For example, $\omega/3$ and $\omega^{1/2}$ are admissible.

To be explicit, a surnatural number may be expressed in the normal form
(\ref{eq:normalform}) where $\alpha$ is an ordinal, the numbers $r_\beta$
are non-negative real numbers, the numbers $y_\beta$ form a descending
sequence of non-negative reals, and $r_0$ is a non-negative integer.
Numbers with $r_0=0$ are called \emph{purely infinite} surnatural numbers.

It is noteworthy that, since all numbers $r\cdot\omega^\beta$
with $r\in\R$ and $\beta>0$ are surnaturals,
it follows that $\omega$ is divisible by all numbers in $\mathbb{N}$,
and indeed in $\mathbb{R}$.
Thus, $\omega$ is an even number, since $\omega/2\in\Nn$, a multiple
of $3$ since $\omega/3\in\Nn$, and so on. Indeed, for all $k\in\mathbb{N}$, $k!$ divides $\omega$.
Moreover, $\sqrt[k]{\omega}\in\Nn$, so $\omega$ is a perfect square,
a perfect cube, and so on.

\subsubsection*{Surreal Numbers \&\ Current Research}

Since their invention/discovery around 1970, surreal numbers have attracted
relatively little attention. In \emph{The Princeton Companion to Mathematics}
\cite{PCM08},
a comprehensive review of mathematics extending to more than 1{,}000 pages,
there is no index entry for `surreal numbers'. J.H.Conway's name is indexed,
but all four references are to his work in group theory.

Recently, Bertram \cite{Bert25} has advanced a new formulation
of the theory of surreal numbers on the foundation of standard set
theory.  He constructs a hierarchy having properties similar to the
von Neumann hierarchy $\VN$, but with two kinds of membership, $\in_L$
(left member) and $\in_R$ (right member) instead of just one type
$\in$.  The bibliography in \cite{Bert25} includes a good
selection of recent publications on surreal numbers.

Hamkins \cite{Ham20} refers repeatedly in his \emph{Lectures on the Philosophy of Mathematics}
to the Field of surreal numbers.  He describes Conway's creation of surreal numbers,
as expounded in \cite{ONAG}, as ``a beautiful and remarkable mathematical theory.''
He notes \cite[pg.~79]{Ham20} a categoricity result for the surreal numbers,
which form a nonstandard ordered field of proper class size: the class of surreals is unique
in the sense that, in G\"{o}del-Bernays set theory with global choice,
all such ordered fields are isomorphic.

In the \emph{Notices of the American Mathematical Society}
from 2018 to 2023 \cite{NAMS}, thesis titles were listed for a total of
about 8{,}000 doctoral degrees awarded in the mathematical sciences
between July 1,~2016 and June 30,~2020,
as reported by about 250 departments in 200 universities in the United States.
There were no occurrences of the word `surreal' in these lists.

Although Conway regarded surreals as his most significant creation,
they have attracted relatively little attention; given their appealing
properties and their remarkable elegance, this is surprising. 
A small number of researchers have worked on surreal numbers, and
they have also been of interest to recreational mathematicians.
For a review of progress over the past half-century, see \cite{Eh22}
in \cite{Ryba22}.

\addcontentsline{toc}{section}
{$\bullet$ \ Appendix 2: Calendar of Magnums for Selected Sets}
\section*{Appendix 2: Calendar of Magnums for Selected Sets}
\label{sec:appendix2}

When is the magnum of a subset of $\mathbb{N}$ first defined?
We consider the ordinals as they arise.

\textbf{Day 0:} The magnum of $\varnothing$ is defined to be $0$.

\textbf{Day 1:} The magnum of the singleton $\{n\}$ is defined
for all $n\in\mathbb{N}$ to be $1$.

\textbf{Day 2:} The magnum of the doubleton $\{m, n\}$ is defined
for all $m, n\in\mathbb{N}$ to be $2$.

\textbf{Day n:} The magnums of all sets with $n$ elements
are defined to be $n$.

Thus, all finite subsets of $\mathbb{N}$ are defined on finite days.
Their magnums are all the finite ordinal numbers.

\textbf{Day $\omega$:} The set $\mathbb{N}$ is given a magnum
on this day: $m(\mathbb{N}) = \omega$, the first infinite ordinal.
From now on, both ordinals and other surnatural numbers will be used.

\textbf{Day $\omega+1$:} 
All ``co-singletons'', sets of the form $\mathbb{N}\setminus\{k\}$,
are assigned the value $\omega-1$. Sets $\N\sqcup\{k\}$ are assigned $\omega+1$.

\textbf{Day $\omega+n$:}
All sets with complements having magnum $n$ are assigned the value $\omega-n$.
Sets $\N\sqcup\{1,2,\dots,n\}$ are assigned $\omega+n$.

Thus, before day $2\omega$, all finite and cofinite sets, together
with $\mathbb{N}$, have magnums assigned.

\textbf{Day $2\omega$:} The new magnum is $\omega/2$.
The obvious candidate for this magnum is $2\mathbb{N}$.
In fact, non-denumerably many other sets have  magnums assigned on this day.
For any natural number $L$, we consider all sets isobaric to
$2\mathbb{N}$ for a fenestration of $\mathbb{N}$
having window length $L$. Even for $L=2$, we have
an uncountable collection of sets isobaric to $2\mathbb{N}$.

The question is: \emph{have we now got {all} the sets}
with magnum $\omega/2$? 
To define the magnum of a candidate set $A$ that is neither finite nor cofinite,
we consider the magnum form:
\begin{equation}
m(A) = \{ m(B) : B\in\Omag_{2\omega} \land B \subset A \ |
        \ m(C) : C\in\Omag_{2\omega} \land C \supset A \} \,,
\nonumber
\end{equation}
The only (old) magnums available
on day $2\omega$ are $\{m : m\in\mathbb{N} \}$ and $\{\omega-n : n\in\mathbb{N}_0 \}$.
Thus, all the admissible sets $B\subset A$ are finite and all the admissible sets
$C\supset A$ are cofinite.  The set $A$ has infinite subsets and supersets,
but the magnums of these are not yet assigned.

The set $2\mathbb{N}$ certainly has subsets and supersets whose magnums have not yet appeared.
For example, $4\mathbb{N}\subset 2\mathbb{N}$ and $2\mathbb{N}\uplus(4\mathbb{N}+1) \supset 2\mathbb{N}$.
We can rule out their membership of $\Nmag_{2\omega}$ since, by the Euclidean Principle,
proper subsets and proper supersets of a set cannot have magnums equal to it.

A calendar of some subsets of $\mathbb{N}$, and of some larger sets, that acquire magnums
up to day $\omega^2$ is presented in Table~\ref{tab:timtab}.



\begin{table}[h]
\caption{Calendar of days on which some simple sets are given magnums.
$j$, $k$, $\ell$, $m$ and $n$ are natural numbers.}
\label{tab:timtab}

      \begin{small}

\begin{center}
\begin{tabular}{||c|c|l|c|l||}   
\hline\hline   
Day \hash   &     Magnum   &   Sets in $\mathscr{P}(\N)$  &  Magnum  &  Larger Sets   \\
\hline\hline   
   $0$      &      $0$     &   $\varnothing$    &    &   \\
\hline   
   $1$      &      $1$     &   $\{ k \}$   &    &   \\
\hline   
   $2$      &      $2$     &   $\{ k,\ell \}$    &    &   \\
\hline   
   $3$      &      $3$     &   $\{ k,\ell, m \}$   &    &   \\
\hline   
 $\cdots$   &   $\cdots$   &       $\cdots$        &    &   \\
\hline   
   $n$      &      $n$     &   $\{k_1 ,k_2, \dots , k_n \}$  &    &   \\
\hline   
 $\cdots$   &   $\cdots$   &       $\cdots$        &    &   \\
\hline   
 $\omega$   &   $\omega$   &       $\N$        &    &   \\
\hline   
 $\omega+1$ &   $\omega-1$ &    $\N\setminus\{k\}$  &  $\omega+1$   & $\N\sqcup\{k\}$   \\
\hline   
 $\omega+2$ &   $\omega-2$ &    $\N\setminus\{k,\ell\}$ & $\omega+2$   & $\N\sqcup\{k,\ell\}$  \\
\hline   
 $\cdots$   &   $\cdots$   &       $\cdots$        & $\cdots$   & $\cdots$  \\
\hline   
 $\omega+n$ &   $\omega-n$ &    $\N\setminus\{k_1, k_2, \dots ,k_n \}$ &  $\omega+n$ &  $\N\sqcup\{k_1, k_2, \dots ,k_n \}$ \\
\hline   
 $\cdots$   &   $\cdots$   &       $\cdots$        & $\cdots$   & $\cdots$  \\
\hline   
 $2\omega$ &   $\frac{\omega}{2}$     & $2\N$ & $ 2\omega$   & $\N^{\sqcup 2}:=\N\sqcup\N$  \\
           &                          & $2\N-1$ &            & $\half\N=\N\uplus(\N-\frac{1}{2})$  \\
\hline   
 $2\omega+1$ &   $\frac{\omega}{2}+1$ & $2\N\uplus\{2k-1\}$  & $2\omega+1$   & $\N^{\sqcup 2}\sqcup\{(k,\ell)\}$  \\
             &   $\ \ \frac{\omega}{2}-1$ &    $2\N\setminus\{2k\}$ & $2\omega-1$  & $\N^{\sqcup 2}\setminus\{(k,\ell)\}$ \\
\hline   
 $2\omega+2$ &   $\frac{\omega}{2}+2$ & $2\N\uplus\{2k-1,2\ell-1\}$  & $2\omega+2$  & $\N^{\sqcup 2}\sqcup\{(k_1,\ell_1),(k_2,\ell_2)\}$  \\
             &   $\ \ \frac{\omega}{2}-2$ & $2\N\setminus\{2k,2\ell\}$ & $2\omega-2$  & $\N^{\sqcup 2}\setminus\{(k_1,\ell_1),(k_2,\ell_2)\}$  \\
\hline   
 $\cdots$   &   $\cdots$   &       $\cdots$        & $\cdots$   & $\cdots$  \\
\hline   
 $3\omega$ &   $\frac{\omega}{4}$     &    $4\N$              & $3\omega$   &  $\N^{\sqcup 3}:=\N\sqcup\N\sqcup\N$  \\ 
           &   $\frac{3}{4}\omega$    &    $\N\setminus 4\N$  &             &  $\frac{1}{3}\N$       \\
\hline   
 $3\omega+1$ &   $\frac{\omega}{4}+1$     &    $4\N\uplus\{4k-1\}$   &  $3\omega+1$  &  $\N^{\sqcup 3}\sqcup\{(k,\ell,m)\}$ \\
             &   $\ \ \frac{\omega}{4}-1$     &    $4\N\setminus\{4k\}$  & $3\omega-1$    & $\N^{\sqcup 3}\setminus\{(k,\ell,m)\}$  \\
             &   $\frac{3}{4}\omega+1$    &    $(\N\setminus 4\N)\uplus\{4k\}$    &    &   \\
             &   $\ \ \frac{3}{4}\omega-1$    &    $(\N\setminus 4\N)\setminus\{4k-1\}$ &    &   \\
\hline   
 $\cdots$   &   $\cdots$   &       $\cdots$        & $\cdots$   & $\cdots$  \\
\hline   
 $4\omega$ &   $\frac{1}{8}\omega$     &    $8\N$  &  $4\omega$   &  $\N^{\sqcup 4}:=\N\sqcup\N\sqcup\N\sqcup\N$   \\
           &   $\ \ \frac{3}{8}\omega$     &    $8\N\uplus(8\N-1)\uplus(8\N-2)$  &    & $\frac{1}{4}\N$   \\
           &   $\frac{5}{8}\omega$     &    $\N\setminus[8\N\uplus(8\N-1)\uplus(8\N-2)]$   &    &   \\
           &   $\ \ \frac{7}{8}\omega$     &    $\N\setminus 8\N$    &    &   \\
\hline   
 $\cdots$   &   $\cdots$   &       $\cdots$        & $\cdots$   & $\cdots$  \\
\hline   
 $\omega^2$ &  $\frac{j}{k}\omega$ &   $\biguplus_{m=0}^{j-1} (k\N-m)$   & $\omega^2$   & $\N^2:=\N\times\N$  \\
           &           $\cdots$       &              $\cdots$           &    &   \\
           &       $\sqrt{\omega}$    & $\N^{(2)} = \{1,4,9,16,\dots\}$         &    &   \\
\hline\hline   
\end{tabular}   
\end{center}

      \end{small}

\end{table}
\end{document}